\theoremstyle{plain}
\newtheorem*{theorem}{Theorem}
\newtheorem*{proposition}{Proposition}
\newtheorem*{lemma}{Lemma}
\newtheorem*{corollary}{Corollary}
\theoremstyle{definition}
\newtheorem*{definition}{Definition}
\newtheorem*{example}{Example}
\newtheorem*{remark}{Remark}
\numberwithin{equation}{section}
\renewcommand{\bar}[1]{\overline{#1}}
\newcommand{\cat}[1]{\mathsf{#1}}
\renewcommand{\hat}[1]{\widehat{#1}}
\newcommand{\lie}[1]{\mathfrak{#1}}
\renewcommand{\tilde}[1]{\widetilde{#1}}
\newcommand{\bC}{\mathbb{C}}
\newcommand{\bh}{\mathbbm{h}}
\newcommand{\bk}{\mathbbm{k}}
\newcommand{\bL}{\mathbb{L}}
\newcommand{\bQ}{\mathbb{Q}}
\newcommand{\bT}{\mathbb{T}}
\newcommand{\bZ}{\mathbb{Z}}
\newcommand{\cA}{\mathcal{A}}
\newcommand{\cB}{\mathcal{B}}
\newcommand{\cE}{\mathcal{E}}
\newcommand{\cF}{\mathcal{F}}
\newcommand{\cG}{\mathcal{G}}
\newcommand{\cH}{\mathcal{H}}
\newcommand{\cL}{\mathcal{L}}
\newcommand{\cN}{\mathcal{N}}
\newcommand{\cO}{\mathcal{O}}
\newcommand{\cT}{\mathcal{T}}
\newcommand{\cU}{\mathcal{U}}
\newcommand{\cV}{\mathcal{V}}
\newcommand{\fC}{\mathfrak{C}}
\newcommand{\fM}{\mathfrak{M}}
\newcommand{\fN}{\mathfrak{N}}
\newcommand{\fU}{\mathfrak{U}}
\newcommand{\fV}{\mathfrak{V}}
\newcommand{\fW}{\mathfrak{W}}
\newcommand{\fX}{\mathfrak{X}}
\newcommand{\fY}{\mathfrak{Y}}
\newcommand{\fZ}{\mathfrak{Z}}
\newcommand{\sA}{\mathsf{A}}
\newcommand{\sG}{\mathsf{G}}
\newcommand{\sH}{\mathsf{H}}
\newcommand{\sR}{\mathsf{R}}
\newcommand{\sT}{\mathsf{T}}
\newcommand{\sU}{\mathsf{U}}
\newcommand{\Y}{\rotatebox[origin=c]{180}{$Y$}}
\newcommand{\alg}{\mathrm{alg}}
\newcommand{\cl}{\mathrm{cl}}
\newcommand{\doub}{\mathrm{doub}}
\newcommand{\loc}{\mathrm{loc}}
\newcommand{\nil}{\mathrm{nil}}
\newcommand{\op}{\mathrm{op}}
\newcommand{\pt}{\mathrm{pt}}
\newcommand{\sg}{\mathrm{sg}}
\newcommand{\trip}{\mathrm{trip}}
\newcommand{\vac}{\mathbf{1}}
\DeclareMathOperator{\End}{End}
\DeclareMathOperator{\coker}{coker}
\DeclareMathOperator{\crit}{crit}
\DeclareMathOperator{\Ext}{Ext}
\DeclareMathOperator{\Frac}{Frac}
\DeclareMathOperator{\GL}{GL}
\DeclareMathOperator{\Gr}{Gr}
\DeclareMathOperator{\gr}{gr}
\DeclareMathOperator{\Hom}{Hom}
\DeclareMathOperator{\cHom}{\mathcal{H}{\it om}}
\DeclareMathOperator{\id}{id}
\DeclareMathOperator{\im}{im}
\DeclareMathOperator{\rank}{rank}
\DeclareMathOperator{\Spec}{Spec}
\DeclareMathOperator{\Sym}{Sym}
\DeclareMathOperator{\tr}{tr}
\DeclarePairedDelimiterX{\lseries}[1]{(}{)}{\mkern-2mu\delimsize(#1\delimsize)\mkern-2mu}
\DeclarePairedDelimiterX{\pseries}[1]{[}{]}{\mkern-2mu\delimsize[#1\delimsize]\mkern-2mu}
\title{Multiplicative vertex algebras and quantum loop algebras}
\author{Henry Liu}
\date{\today}
\begin{document}

\maketitle

\begin{abstract}
  We define a multiplicative version of vertex coalgebras and show
  that various equivariant K-theoretic Hall algebras (KHAs) admit
  compatible multiplicative vertex coalgebra structures. In
  particular, this is true of Varagnolo--Vasserot's preprojective KHA,
  which is (conjecturally) isomorphic to positive halves of certain
  quantum loop algebras.
\end{abstract}

\begin{spacing}{1.15}
  \tableofcontents
\end{spacing}

\renewcommand{\thefootnote}{\fnsymbol{footnote}} 
\footnotetext{\emph{2020 Mathematics Subject Classification}: 17B69, 14C35, 17B37, 14D23}
\renewcommand{\thefootnote}{\arabic{footnote}}

\section{Introduction}

In \cite{Joyce2021}, Joyce geometrically constructs a vertex algebra
structure on the homology groups of certain moduli stacks $\fM$. In
\cite{Liu2022}, we gave an equivariant and multiplicative
generalization: the so-called operational K-homology groups of $\fM$
are equivariant multiplicative vertex algebras. In particular, this
holds when $\fM = \fM_Q$ is a moduli stack of representations of a
quiver $Q$. The cohomology/K-theory/etc. groups of stable loci of
$\fM_Q$ often carry actions of Yangians/quantum loop algebras/etc.
\cite{Davison2023,Maulik2019}. It is then natural to ask: what is the
interaction between the multiplicative vertex algebras and these
quantum loop algebras?

Contrary to this question and the title, in this paper there are no
multiplicative vertex algebras. Rather, we define
(\S\ref{sec:braided-coVA}) and study the categorically-dual notion of
(braided) multiplicative vertex {\it coalgebras}. Their axioms are
different from the naive multiplicative analogue of ordinary vertex
coalgebra axioms \cite{Hubbard2009}, and also different from the
categorical dual of the vertex $F$-algebras of \cite{Li2011} when $F$
is the multiplicative group law. For instance, there appears to be no
canonical notion of an ``unbraided'' multiplicative vertex coalgebra.

The geometric input is as follows. The moduli stack $\fM_Q$ has a
natural action by a torus $\sT$ scaling the linear maps in the
representation given by edges of $Q$. We consider the equivariant
(algebraic, zeroth) K-group $K_\sT(\fM_Q)$. Following the well-known
Kontsevich--Soibelman construction in cohomology
\cite{Kontsevich2011}, $K_\sT(\fM_Q)$ can be made into a {\it
  K-theoretic Hall algebra (KHA)} with product denoted by $\star$.

\begin{theorem}[Easy case of main theorems]
  \begin{enumerate}
  \item (Theorem~\ref{thm:coVA-construction-general}) $K_\sT(\fM_Q)$
    admits a multiplicative vertex coalgebra structure $(\vac, D(z),
    \Y(z), C(z))$.

  \item (Theorem~\ref{thm:compatibility-general}) The KHA product
    $\star$ on $K_\sT(\fM_Q)$ is compatible with this multiplicative
    vertex coalgebra structure, forming a multiplicative vertex
    bialgebra.
  \end{enumerate}
\end{theorem}

Both parts of this theorem are direct K-theoretic analogues of
cohomological results of Latyntsev \cite{Latyntsev2021}. The first
part uses a construction dual to the K-homology construction of
\cite{Liu2022}. Roughly, the vertex coproduct $\Y(z)$ is given by
pullback along the direct sum map $\Phi\colon \fM_Q \times \fM_Q \to
\fM_Q$, followed by a twist involving a perfect complex $\cE \in
K_\sT^\circ(\fM_Q \times \fM_Q)$ with specific bilinearity properties.
The same twist is used to construct the half-braiding operator $C(z)$.
We expect the vertex coalgebra structure to enrich the study of the
representation theory of KHAs. Furthermore, it should be much easier
to study the vertex coalgebra $K_\sT(\fM_Q)$ than the vertex algebras
present in \cite{Liu2022}.

The relation of all this to quantum loop algebras appears from the
same constructions and results, but for the cotangent or {\it
  preprojective} stack $T^*\fM_Q$. By work of Varagnolo and Vasserot
\cite{Varagnolo2022}, $K_\sT(T^*\fM_Q)$ is also a KHA, called the {\it
  preprojective KHA}. They conjecture, and prove when $Q$ is finite or
affine type excluding $A_1^{(1)}$, an isomorphism
\[ K_\sT(T^*\fM_Q) \cong \cU_\hbar^+(L\lie{g}_{\text{MO}}) \]
with the positive part of the quantum loop algebra constructed by
Maulik, Okounkov and Smirnov \cite{Maulik2019,Okounkov2022} using
(K-theoretic) stable envelopes on the Nakajima quiver varieties
associated to $Q$. This is doubly interesting because the KHA product
$\star$, of arbitrary elements, has the very explicit form of a {\it
  shuffle product} \cite{Negut2021}. For compatibility, the kernel of
this shuffle product must be exactly the bilinear element $\cE$
defining the vertex coalgebra.

\begin{theorem}[Main theorems]
  \begin{enumerate}
  \item (Theorem~\ref{thm:coVA-construction-preprojective})
    $K_\sT(T^*\fM_Q)_{\loc}$ admits a multiplicative vertex coalgebra
    structure $(\vac, D(z), \Y(z), C(z))$.

  \item (Theorem~\ref{thm:compatibility-preprojective}) There is a
    twisted KHA product $\star_\omega$ on $K_\sT(T^*\fM_Q)_{\loc}$
    compatible with this multiplicative vertex coalgebra structure,
    forming a multiplicative vertex bialgebra.
  \end{enumerate}
\end{theorem}

Unlike $\fM_Q$, the stack $T^*\fM_Q$ is badly singular, and so the
main technical difficulty here is that pullback along $\Phi$ no longer
exists on $K_\sT(T^*\fM_Q)$ and cannot be used to construct a vertex
coproduct. However, by dimensional reduction \cite{Isik2013}, there is
an isomorphism
\begin{equation} \label{eq:preprojective-to-critical-K-groups-intro}
  K_\sT(T^*\fM_Q) \cong K_\sT^{\crit}(\fM_{Q^{\trip}}, \tr W^{\trip})
\end{equation}
with the equivariant {\it critical K-groups} of the {\it tripled}
quiver $Q^{\trip}$ associated to $Q$ and an appropriate potential
$W^{\trip}$ on $Q^{\trip}$. Roughly, if the ambient space $M$ is
affine, $K_\sT^{\crit}(M, \phi)$ is a better-behaved refinement of the
ordinary K-theory of the critical locus $\{d\phi = 0\} \subset M$. By
virtue of its presentation as K-groups of matrix factorizations when
$M$ is smooth \cite{Orlov2004,Polishchuk2011}, critical K-theory
admits pullbacks along arbitrary maps, including $\Phi$, which we use
to construct the desired vertex coalgebra structure.

In fact, the main theorems hold very generally: for arbitrary quivers
with potential $(Q, W)$, P{\u a}durariu constructs a KHA structure on
$K_\sT^{\crit}(\fM_Q, \tr W)$ \cite{Puadurariu2023}, and we can make
these {\it critical KHAs} into vertex bialgebras as well
(Remarks~\ref{rem:coVA-general}, \ref{rem:vertex-bialgebra-general}).
But a mild K\"unneth assumption is required, and unlike in ordinary
cohomology, K\"unneth theorems are rare in K-theory, especially
equivariantly where some form of equivariant formality is usually
necessary. We discuss this in Appendix~\ref{sec:kunneth-properties}.
For this reason, and also for simplicity of exposition, the main
theorems are stated only for the special case of
\eqref{eq:preprojective-to-critical-K-groups-intro}.

It is very plausible that all of our results continue to hold in the
world of ordinary vertex coalgebras, critical cohomology,
cohomological Hall algebras, and Yangians. Indeed, many of our
constructions, especially for $T^*\fM_Q$, stem from earlier
cohomological work of Davison, see e.g. \cite{Davison2023}.
Furthermore, our results should also generalize immediately to the
K-theory of moduli stacks of coherent sheaves on curves. (Surfaces may
be harder because an analogue of
\eqref{eq:preprojective-to-critical-K-groups-intro} is needed.)

\subsection{Outline of the paper}

We begin in \S\ref{sec:equivariant-K-theory} with a leisurely review
of equivariant K-theory, both the ordinary and the critical kind. In
\S\ref{sec:K-theory}, we fix some notation and provide some tools for
equivariant K-theory in general. In particular we review
(\S\ref{sec:virtual-localization}) virtual localization in the
language of dg-schemes. In \S\ref{sec:critical-K-theory}, we define
critical K-theory as the K-group of a specific singularity category,
and explain its presentation using matrix factorizations
(Theorem~\ref{thm:Dcrit-as-matrix-factorizations}) as well as its
dimensional reduction theorem
(Theorem~\ref{thm:dimensional-reduction}) which at the level of
derived categories involves dg-schemes. As a fairly representative
example, we compute $K_\sT^{\crit}(\bC^2, xy)$.

Section~\ref{sec:braided-coVA} is about multiplicative vertex
coalgebras and our geometric construction of them. In
\S\ref{sec:braided-coVA-theory}, we give and motivate the general
definition, and explain why it is categorically dual to multiplicative
vertex {\it algebras}. In \S\ref{sec:quiver-coVA}, we set up moduli
stacks $\fM$ of quiver representations for a quiver $Q$, its doubling
$Q^{\doub}$ and its tripling $Q^{\trip}$, and make $K_\sT(\fM)$ into
multiplicative vertex coalgebras
(Theorem~\ref{thm:coVA-construction-general}). In
\S\ref{sec:preprojective-coVA}, we do the same for the preprojective
stack $T^*\fM_Q$ via the critical K-theory of $\fM_{Q^{\trip}}$
(Theorem~\ref{thm:coVA-construction-preprojective}). Some localization
is necessary here to preserve the K\"unneth property.

Section~\ref{sec:compatibility-KHA} upgrades these multiplicative
vertex coalgebras into multiplicative vertex bialgebras. In
\S\ref{sec:HAs}, we begin by defining the Hall product on all the
K-groups above, taking care to note a slight discrepancy
(Proposition~\ref{prop:twisted-preprojective-KHA}) between the Hall
products of the two sides of
\eqref{eq:preprojective-to-critical-K-groups-intro}. In
\S\ref{sec:compatibility}, we prove the main compatibility theorems
between the vertex coalgebra and Hall algebra structures on the
K-groups of $\fM$ and $T^*\fM$. This gives an geometric interpretation
of the formal variable $z$ appearing in the vertex coalgebra as the
weight of a certain $\bC^\times$-action. Finally, in
\S\ref{sec:comparison-ambient-vertex-bialgebra}, we show that the
natural morphism $K_\sT^{\crit}(\fM_{Q^{\trip}}, \tr W^{\trip}) \to
K_\sT(\fM_{Q^{\trip}})$, known already to be a Hall algebra morphism,
also preserves the vertex coalgebra structure. We also record
(\S\ref{sec:quiver-KHA-shuffle-formula}) some explicit formulas for
the vertex bialgebra on the right hand side.

Appendix~\ref{sec:kunneth-properties} gives a general strategy to
prove K\"unneth theorems in equivariant K-theory, using excision along
a (equivariant) stratification whose strata individually satisfy
K\"unneth theorems. While the strategy is insufficient when applied to
$K_\sT(T^*\fM)$, it does work for the related stack $\fN^{\nil}$ of
quiver representations with nilpotent endomorphism.

\subsection{Acknowledgements}

This project benefitted greatly from interactions with D. Joyce, A.
Okounkov, and T. P{\u a}durariu, and was supported by the Simons
Collaboration on Special Holonomy in Geometry, Analysis and Physics.
The revised version was written with support from the World Premier
International Research Center Initiative (WPI), MEXT, Japan.

\section{Equivariant K-theory}
\label{sec:equivariant-K-theory}

\subsection{Notation and review}
\label{sec:K-theory}

\subsubsection{}

Throughout this paper, all (dg-)schemes are separated and finite type
over $\bC$.

\subsubsection{}

\begin{definition}
  Let $X$ be a quasi-projective scheme with the action of a reductive
  group $\sG$. Let
  \begin{equation} \label{eq:perf-inside-dbcoh}
    \cat{Perf}_\sG(X) \subset D^b\cat{Coh}_\sG(X)
  \end{equation}
  be the full subcategory of $\sG$-equivariant perfect complexes,
  inside the derived category of $\sG$-equivariant coherent sheaves on
  $X$. Denote their Grothendieck K-groups by
  \begin{align*}
    K_\sG(X) &\coloneqq K_0(D^b\cat{Coh}_\sG(X)) \\
    K_\sG^\circ(X) &\coloneqq K_0(\cat{Perf}_\sG(X)).
  \end{align*}
  Equivalently, $K_\sG^\circ(X) \cong K_0(\cat{Vect}_\sG(X))$ is built
  from $\sG$-equivariant vector bundles \cite[\S 2]{Totaro2004}.

  Both $K_\sG(X)$ and $K_\sG^\circ(X)$ are modules for
  $\bk_\sG\coloneqq K_\sG(\pt)$, which by definition is the
  representation ring of $\sG$. If $\sT \subset \sG$ is a maximal
  torus, then
  \[ \bk_\sG = \bZ[t^\mu]^W \subset \bZ[t^\mu] = \bk_\sT \]
  is (the Weyl-invariant part of) the group algebra of the character
  lattice of $\sT$.
\end{definition}

\subsubsection{}

Unless stated otherwise, all pushforwards and pullbacks are derived,
and, when working with $\sG$-equivariant K-groups, all objects and
morphisms are assumed to be $\sG$-equivariant. This is to preserve
$\sG$-equivariant exact sequences, a necessary condition to induce
morphisms of $\sG$-equivariant K-groups.

\subsubsection{}
\label{sec:k-theory-pushforward-pullback}

The $\bk_\sG$-modules $K_\sG(X)$ and $K_\sG^\circ(X)$ carry different
functoriality and structure, and the inclusion
\eqref{eq:perf-inside-dbcoh} induces a morphism $\Upsilon\colon
K_\sG^\circ(X) \to K_\sG(X)$ of $\bk_\sG$-modules which is in general
neither injective nor surjective. Let $f\colon X \to Y$ be a
$\sG$-equivariant morphism.
\begin{itemize}
\item There is a (functorial) pullback $f^*\colon K^\circ_\sG(Y) \to
  K^\circ_\sG(X)$. Tensor product $\otimes\colon K^\circ_\sG(X)
  \otimes K^\circ_\sG(X) \to K^\circ_\sG(X)$ makes $K^\circ_\sG(X)$
  into a ring.

\item If $f$ is proper, there is a (functorial) pushforward $f\colon
  K_\sG(X) \to K_\sG(Y)$. If $f$ has finite Tor amplitude, e.g. $f$ is
  flat, there is a (functorial) pullback $f^*\colon K_\sG(Y) \to
  K_\sG^\circ(X)$ which we typically compose with $\Upsilon$ to get
  $f^*\colon K_\sG(Y) \to K_\sG(X)$. Tensor product $\otimes\colon
  K^\circ_\sG(X) \otimes K_\sG(X) \to K_\sG(X)$ makes $K_\sG(X)$ into
  a $K_\sG^\circ(X)$-module.

\item While the external tensor product $\boxtimes\colon K_\sG(X)
  \otimes K_\sG(Y) \to K_\sG(X \times Y)$ always exists, $\otimes
  \coloneqq \Delta^* \boxtimes$ only exists if the diagonal embedding
  $\Delta\colon X \to X \times X$ has finite Tor amplitude.
\end{itemize}
If $X$ is smooth, $\Upsilon$ is an isomorphism, see e.g.
\cite[Proposition 5.1.28]{Chriss1997}, otherwise the discrepancy is
measured by the {\it singularity category}
\[ D_\sG^{\sg}(X) \coloneqq D^b\cat{Coh}_\sG(X)/\cat{Perf}_\sG(X). \]

\subsubsection{}
\label{sec:equivariant-K-theory-properties}

We primarily use the following tools to control equivariant K-groups.

\begin{theorem}
  \begin{enumerate}
  \item (Long exact sequence {\cite[\S 5.2.14]{Chriss1997}}) If
    $i\colon Z \hookrightarrow X$ is a $\sG$-equivariant closed
    embedding, and $j\colon U \hookrightarrow X$ is its complement,
    then there is a long exact sequence
    \begin{equation} \label{eq:long-exact-sequence}
      \cdots \to K_\sG(Z) \xrightarrow{i_*} K_\sG(X) \xrightarrow{j^*} K_\sG(U) \to 0
    \end{equation}
    where $\cdots$ hides complicated beasts known as higher K-groups.

  \item (Thom isomorphism theorem {\cite[Theorem 5.4.17]{Chriss1997}})
    If $\pi\colon E \to X$ is a $\sG$-equivariant vector bundle, then
    $\pi^*\colon K_\sG(X) \to K_\sG(E)$ is an isomorphism.

  \item (Equivariant concentration {\cite[Th\'eor\`eme
      2.2]{Thomason1992}}) Let $g \in \sG$ be a central element. Then
    the inclusion $i\colon X^g \hookrightarrow X$ of the $g$-fixed
    locus induces an isomorphism
    \[ i_*\colon K_\sG(X^g)_\loc \xrightarrow{\sim} K_\sG(X)_\loc, \]
    where the subscript $\loc$ indicates base change from $\bk_\sG$ to
    $\Frac(\bk_\sG)$.
  \end{enumerate}
\end{theorem}

To emphasize, equivariant concentration holds without any further
assumptions on the closed immersion $i$. Additional assumptions, e.g.
that $i$ is regular, are only required when one wants a nice formula
for the inverse $(i_*)^{-1}$, using the self-intersection formula
\eqref{eq:self-intersection-formula} below for instance.

\subsubsection{}

\begin{remark}
  Every linear algebraic group $\sG$ decomposes as $\sG = \sR \ltimes
  \sU$ where $\sR$ is reductive and $\sU$ is its unipotent radical.
  The Thom isomorphism theorem, along with the Morita equivalence
  $K_\sG(\sG \times_\sH X) \cong K_\sH(X)$ for subgroups $\sH \subset
  \sG$, can be used to show that
  \begin{equation} \label{eq:equivariant-reduction}
    K_{\sR \ltimes \sU}(X) \cong K_{\sR}(X)
  \end{equation}
  depends only on the reductive part of $\sG$ \cite[\S
    5.2.18]{Chriss1997}.
\end{remark}

\subsubsection{}
\label{sec:dg-stuff}

For convenience later, e.g. for virtual localization
(\S\ref{sec:virtual-localization}), dimensional reduction
(\S\ref{sec:dimensional-reduction}) and base change
(\S\ref{sec:compatibility-base-change-step}) formulas, we will
occasionally work with {\it dg-schemes} $X \coloneqq (X^0,
\cO_X^\bullet)$. This means that $\cO_X^\bullet$ is a quasi-coherent
sheaf of commutative differential graded algebras (cdga) on a scheme
$X^0$, with $\cO_X^i = 0$ for $i > 0$ and $\cO_X^0 = \cO_{X^0}$. The
{\it classical truncation} of a dg-scheme $X$ is
\[ X^{\cl} \coloneqq \Spec \cH^0(\cO_X^\bullet) \subset X^0; \]
conversely, every classical scheme $X$ is a dg-scheme $(X, \cO_X)$
where $\cO_X$ sits in degree zero. A $\sG$-action on $X$ means a
$\sG$-action on $X^0$ such that $\cO_X$ has $\sG$-equivariant product
and differential. One can view $X$ as approximately equivalent to
$X^{\cl}$ equipped with a ($\sG$-equivariant) obstruction theory.

An $\cO_X$-module $\cE$ is a $\cO_{X^0}$-module with an action of the
cdga $\cO_X^\bullet$, and is {\it coherent} if its total cohomology
sheaf $\cH(\cE) \coloneqq \bigoplus_i \cH^i(\cE)[-i]$ is coherent over
$\cH(\cO_X^\bullet)$. Then $D^b\cat{Coh}(X)$ is defined to be the
derived category of the category of coherent $\cO_X$-modules, i.e. the
triangulated category obtained by inverting all quasi-isomorphisms in
the homotopy category of coherent $\cO_X$-modules \cite[\S
  1]{Isik2013}. It has a standard t-structure whose heart
$D^{\heartsuit} \subset D^b\cat{Coh}(X)$ consists of coherent
$\cO_X$-modules with cohomology only in degree $0$, so
\[ \cH^0\colon D^{\heartsuit} \xrightarrow{\sim} \cat{Coh}(X^{\cl}) \]
is an equivalence of categories. This is also true equivariantly,
hence $K_\sG(X) = K_\sG(X^{\cl})$. However, in general $\cH^0$ does
not preserve perfect complexes, so $K_\sG^\circ(X) \neq
K_\sG^\circ(X^{\cl})$.

All of the preceding content in this subsection continues to hold for
dg-schemes, without change, with basically the same proofs
\cite{Khan2022, Aranha2024a}.

\subsubsection{}

\begin{example} \label{ex:derived-zero-locus}
  Let $s \in \Gamma(X, \cE)$ be a section of a locally free sheaf on a
  scheme $X$. The {\it derived zero locus}
  $s^{-1}(0)^{\text{derived}}$ is (or has a preferred model as) the
  derived Spec
  \[ s^{-1}(0)^{\text{derived}} = R\Spec(\wedge^\bullet \cE^\vee) \]
  where $\wedge^\bullet \cE^\vee$ is the Koszul complex associated to
  $s$, and the ordinary zero locus $s^{-1}(0)$ is its classical
  truncation. If $s$ is a regular section, then the Koszul complex is
  exact except at degree $0$ and
  \[ s^{-1}(0)^{\text{derived}} = s^{-1}(0). \]
  Otherwise the two are different, and have different derived
  categories (but the same K-groups). Note that if $X$ is a smooth
  variety, $s$ is regular if and only if $s^{-1}(0)$ is of expected
  dimension.
\end{example}

\subsubsection{}

A morphism $f\colon X \to Y$ of dg-schemes has an associated
$\cO_X$-module $\bL_{X/Y}$ of K\"ahler differentials, which we view as
a complex of $\cO_{X^0}$-modules and call the {\it cotangent complex}.
When it is perfect, its dual is denoted by $\bT_{X/Y}$ and called the
{\it tangent complex}.

We say $f$ is {\it quasi-smooth} if $\bL_{X/Y}$ is perfect and of
Tor-amplitude $[-1, \infty)$. For instance, if $X$ is quasi-smooth
  (over $Y = \pt$), the map $i^*\bL_X \to \bL_{X^{\cl}}$ associated to
  the canonical inclusion $i\colon X^{\cl} \to X$ is a perfect
  obstruction theory for $X^{\cl}$. Note that, in K-theory, pullback
  along the induced morphism $f^{\cl}\colon X^{\cl} \to Y^{\cl}$ is
  generally different from pullback along $f\colon X \to Y$, which
  classically is known as a {\it virtual pullback} \cite{Qu2018}.

\subsubsection{}
  
For a vector bundle $\cE \in \cat{Vect}_\sG(X)$, let $\wedge^i \cE$ be
its $i$-th exterior power and, for a formal variable $z$, define
\[ \wedge^\bullet_{-z}(\cE) \coloneqq \sum_i (-z)^i \wedge^i\cE \in K_\sG^\circ(X)[z]. \]
When $z=1$, this is the K-theoretic analogue of the Euler class of
$\cE$: for a quasi-smooth closed immersion $i\colon Z \hookrightarrow
X$ of dg-schemes, there is the K-theoretic {\it self-intersection
  formula}
\begin{equation} \label{eq:self-intersection-formula}
  i^* i_*(-) = (-) \otimes \wedge^\bullet_{-1}(\cN_i^\vee)
\end{equation}
where $\cN_i^\vee \coloneqq \bL_i[-1]$ is the {\it virtual conormal
  bundle} of $i$ \cite[\S 2.5]{Qu2018}, and its proof shows that the
equality holds for both $i^*i_*\colon K_\sG(Z) \to K_\sG(Z)$ and
$i^*i_*\colon K_\sG^\circ(Z) \to K_\sG^\circ(Z)$. Recall that if $Z$
and $X$ are classical schemes, then $i$ is quasi-smooth if and only if
it is regular.

If $K_\sG(Z)$ is not torsion for $\wedge^\bullet_{-1}(\cN_i^\vee)$,
then $i_*$ must be injective and the long exact sequence
\eqref{eq:long-exact-sequence} becomes short exact.

\subsubsection{}
\label{sec:virtual-localization}

Let $X$ be a quasi-smooth dg-scheme acted on by $\sG$, and $i\colon
X^g \hookrightarrow X$ be the $g$-fixed locus for a central element $g
\in \sG$ \cite[\S 5.2]{Ciocan-Fontanine2009}. The self-intersection
formula does not immediately apply to $i$, because $i$ may not be
quasi-smooth or even of finite Tor amplitude. Assuming that
$\cN_i^\vee$ has a global resolution $\cE_1 \to \cE_0$ by
$\sG$-equivariant vector bundles, the typical procedure (e.g. like in
\cite[\S 3.2]{Qu2018}) is to adjust the derived structure on $X^g$ by
$\cE_1$ to make $i$ quasi-smooth, and then to apply the usual
self-intersection formula \eqref{eq:self-intersection-formula}.
Assuming furthermore that an inverse of $\wedge^\bullet_{-1}(\cE_1)$
exists in $K_\sG(X^g)_{\loc}$, this adjustment may then be reversed by
multiplying by $\wedge^\bullet_{-1}(\cE_1)^{-1}$.

For us, it will be more convenient to use the formalism of
\cite{Aranha2024}. To summarize, in the above setting with the above
assumptions, they repackage the aforementioned procedure into a
homomorphism $i^!\colon K_\sG(X)_{\loc} \to K_\sG(X^g)_{\loc}$ called
{\it Gysin pullback}, and then prove the self-intersection formula
\begin{equation} \label{eq:virtual-self-intersection-formula}
  i^! i_*(-) = (-) \otimes \wedge^\bullet_{-1}(\cN_i^\vee)
\end{equation}
on $K_\sG(X^g)_{\loc}$, where $\wedge^\bullet_{-1}(\cN_i^\vee)
\coloneqq \wedge^\bullet_{-1}(\cE_0) \otimes
\wedge^\bullet_{-1}(\cE_1)^{-1}$ is well-defined by assumption. If $i$
is quasi-smooth then $\cE_1 = 0$ and $i^! = i^*$, recovering
\eqref{eq:self-intersection-formula}, but in general $i^!$ is only
well-defined after passing to localized K-groups. In some sense, this
is because $i^!$ differs from $i^*$ by exactly the factor of
$\wedge_{-1}^\bullet(\cE_1)^{-1}$, and
\eqref{eq:virtual-self-intersection-formula} is
\eqref{eq:self-intersection-formula} with both sides multiplied by
$\wedge_{-1}^\bullet(\cE_1)^{-1}$.

Equivariant concentration says $i_*$ is invertible, so if in addition
$\wedge^\bullet_{-1}(\cE_0)$ is also invertible in
$K_\sG(X^g)_{\loc}$, then \eqref{eq:virtual-self-intersection-formula}
immediately implies the {\it virtual localization} formula
\begin{equation} \label{eq:virtual-localization}
  (i_*)^{-1} = \wedge^\bullet_{-1}(\cN_i^\vee)^{-1} \otimes i^!.
\end{equation}

\subsubsection{}
\label{sec:stabilizers-as-equivariance}

Finally, all (dg-)stacks appearing in this paper are naturally global
quotients $[X/G]$ of a quasi-projective (dg-)scheme $X$ by a reductive
group $G$, and we only consider groups $\sG$ acting on $X$ which {\it
  commute} with the $G$-action. In this setting,
\[ D^b\cat{Coh}_{\sG}([X/G]) = D^b\cat{Coh}_{\sG \times G}(X) \]
and similarly for $\cat{Perf}$. One can take the right hand side to be
the definition of the left hand side, if desired. We will often
implicitly switch between the two sides.

\subsection{Critical K-theory}
\label{sec:critical-K-theory}

\subsubsection{}

\begin{definition} \label{def:critical-K-theory}
  Let $M$ be a quasi-projective scheme acted on by a reductive group
  $\sG$, and
  \[ \phi \in \Gamma(M, \cO_M) \]
  be a $\sG$-equivariant regular function of $\sG$-weight denoted by
  $\kappa$. We call $\phi$ the {\it potential}. Assume that $0$ is the
  only critical value of $\phi$. Set
  \[ D_\sG^{\crit}(M, \phi) \coloneqq D_\sG^{\sg}(\phi^{-1}(0)). \]
  The {\it critical K-theory} of $(M, \phi)$ is
  \[ K_\sG^{\crit}(M, \phi) \coloneqq K_0(D_\sG^{\crit}(M, \phi)). \]
  This can be extended to dg-schemes $M$ and potentials $\phi \in
  \Gamma(M^{\cl}, \cO_{M^{\cl}})$, taking $\phi^{-1}(0)$ to be the
  derived zero locus. This can also be extended to quotient
  (dg-)stacks $\fM = [M/G]$ for potentials $\phi$ on $M$ which are
  $G$-invariant, following the discussion of
  \S\ref{sec:stabilizers-as-equivariance}. Note that if $M$ is a
  dg-scheme, $K_\sG^{\crit}(M, \phi) \neq K_\sG^{\crit}(M^{\cl},
  \phi)$ in general, cf. \S\ref{sec:dg-stuff}.
\end{definition}

\subsubsection{}

For most of this paper, $M$ will be affine. Then elements of
$D_\sG^{\crit}(M, \phi)$ are supported only on the singular locus
$\crit(\phi) \coloneqq \{d\phi = 0\} \subset \phi^{-1}(0)$, where
$D^b\cat{Coh}$ and $\cat{Perf}$ differ. Hence $D_\sG^{\crit}(-)$ can
be viewed as a refinement of $D^b\cat{Coh}_\sG(\crit(-))$, see e.g.
\cite{Teleman2020} and \S\ref{sec:Dcrit-functoriality}, and it
categorifies many aspects of critical cohomology.

For us, it will be more useful to consider the following presentation
of $D_\sG^{\crit}(M, \phi)$ as a category of matrix factorizations.

\subsubsection{}

\begin{definition}[{\cite[\S 3.1]{Orlov2004}}]
  Let $M$ be a smooth quasi-projective scheme acted on by a reductive
  group $\sG$. A $\sG$-equivariant {\it matrix factorization} of
  $\phi$ is a pair
  \begin{equation} \label{eq:matrix-factorization}
    \cE_1 \xrightarrow{d_1} \cE_0 \xrightarrow{d_0} \cE_1 \otimes \kappa
  \end{equation}
  of morphisms in $\cat{Vect}_\sG(M)$, satisfying
  \begin{align*}
    d_0 \circ d_1 &= \phi \cdot \id_{\cE_1} \\
    (d_1 \otimes \kappa) \circ d_0 &= \phi \cdot \id_{\cE_0}.
  \end{align*}
  Treating these the same way as $2$-periodic complexes (even though
  they are not complexes), there is a dg-category of matrix
  factorizations, whose homotopy category we denote $\cat{MF}_\sG(M,
  \phi)$. Taking the Verdier quotient by totalizations of short exact
  sequences yields the {\it derived category of matrix factorizations}
  $\cat{DMF}_\sG(M, \phi)$; see \cite[Definition 3.9]{Ballard2014} for
  details. If $M$ is affine, then vector bundles on $M$ are projective
  objects and this quotient does nothing, i.e. $\cat{MF}_\sG(M, \phi)
  = \cat{DMF}_\sG(M, \phi)$.

  One can also define $\cat{MF}_\sG^{\cat{Coh}}(M, \phi)$ by
  considering pairs \eqref{eq:matrix-factorization} in
  $\cat{Coh}_\sG(M)$. Since $M$ is smooth, an adaptation of the proof
  that $K_\sG^\circ(M) \cong K_\sG(M)$ shows that the natural map
  $\cat{MF}_\sG(M, \phi) \xrightarrow{\sim}
  \cat{MF}_\sG^{\cat{Coh}}(M, \phi)$ is an equivalence
  \cite[Proposition 3.14]{Ballard2014}.
\end{definition}

\subsubsection{}

\begin{theorem}[{\cite[Theorem 3.9]{Orlov2004} \cite[Theorem 3.14]{Polishchuk2011}}] \label{thm:Dcrit-as-matrix-factorizations}
  Let $M$ be a smooth quasi-projective scheme acted on by a reductive
  group $\sG$. There is an equivalence of triangulated categories
  \begin{align*}
    \fC\colon \cat{DMF}_\sG(M, \phi) &\xrightarrow{\sim} D_\sG^{\crit}(M, \phi) \\
    (\cE_\bullet, d) &\mapsto \coker(\cE_1 \xrightarrow{d_1} \cE_0).
  \end{align*}
\end{theorem}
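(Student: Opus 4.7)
The plan is to follow Orlov's original approach, equivariantly refined by Polishchuk--Vaintrob. First I would verify that $\fC$ is well-defined on the homotopy category with image in $D_\sG^{\sg}(\phi^{-1}(0))$: the identity $d_0 \circ d_1 = \phi \cdot \id_{\cE_1}$ shows $\phi$ annihilates $\coker(d_1)$, so this cokernel is canonically a $\sG$-equivariant coherent sheaf on $\phi^{-1}(0)$. A direct check shows that chain maps of matrix factorizations which are null-homotopic induce maps on cokernels that factor through objects pulled back from vector bundles on $M$, which are perfect on $\phi^{-1}(0)$ and hence vanish in $D_\sG^{\sg}$; thus $\fC$ descends to $\cat{MF}_\sG(M, \phi)$. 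Compatibility with the triangulated structure is straightforward: mapping cones of matrix factorizations are again matrix factorizations, and $\fC$ sends them to cones of coherent sheaves on $\phi^{-1}(0)$ modulo perfects.

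For essential surjectivity, take any $\cF \in D^b\cat{Coh}_\sG(\phi^{-1}(0))$, regarded as a coherent $\sG$-equivariant $\cO_M$-module. Since $M$ is smooth affine and $\sG$ is reductive, $\cF$ admits a finite $\sG$-equivariant locally free resolution $\cE_\bullet \to \cF$ over $\cO_M$. The key observation (``Buchweitz totalization'') is that multiplication by $\phi$ on $\cE_\bullet$ is null-homotopic, since $\phi$ acts as zero on $\cF$; pick a $\sG$-equivariant null-homotopy $h$ satisfying $d h + h d = \phi \cdot \id$. Sufficiently far from degree $0$, the resolution stabilizes and the pair $(d, h)$ becomes, after appropriate twisting by powers of $\kappa$, a genuine matrix factorization whose cokernel agrees with $\cF$ modulo perfects.

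Full faithfulness is the main technical obstacle. $\Hom$ in $\cat{MF}_\sG(M, \phi)$ is computed as $H^0$ of an explicit $\bZ/2$-graded Hom complex of $\sG$-equivariant sheaves, while $\Hom$ in $D_\sG^{\sg}(\phi^{-1}(0))$ is computed via Buchweitz's stabilization, as a filtered colimit of $\Ext$ groups after repeated syzygy truncation. The matching relies on the observation that for matrix factorizations these $\Ext$ groups stabilize $2$-periodically and coincide with the matrix-factorization Hom complex. In the non-equivariant smooth affine setting this is Orlov's theorem; the main effort is verifying that each step, particularly the stabilization and the Hom comparison, passes to $\sG$-equivariant sheaves, using reductivity of $\sG$ to average resolutions and careful bookkeeping of the weight $\kappa$ to ensure the $\bZ/2$-periodicity twist is respected throughout.
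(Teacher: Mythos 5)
Your overall outline (well-definedness via maps factoring through perfect objects, essential surjectivity by producing a matrix factorization from a resolution, full faithfulness via Buchweitz-style stabilization) is the standard one, and the first and third parts are fine as plans. The paper itself only sketches essential surjectivity and defers the rest to the cited reference, so that is the step worth comparing carefully --- and it is where your argument, as written, has a genuine gap. You conflate two different standard constructions. A finite locally free resolution $\cE_\bullet \to i_*\cF$ over $\cO_M$ (which exists because $M$ is smooth affine and $\sG$ is reductive) \emph{terminates}; it does not ``stabilize sufficiently far from degree $0$.'' The eventually $2$-periodic behaviour you are invoking is Eisenbud's theorem for resolutions over the hypersurface $\cO_{\phi^{-1}(0)}$, not over $\cO_M$. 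If what you intend is instead the folding $\bigoplus_i \cE_{2i} \rightleftarrows \bigoplus_i \cE_{2i+1}$ with differential $d+h$, then note $(d+h)^2 = \phi\cdot\id + h^2$, and $h^2$ need not vanish; one must correct $h$ inductively (possible, since $h^2$ is a chain self-map of a resolution in negative degree, hence null-homotopic) or work with curved modules. Even after that, it is not immediate that $\fC$ applied to the folded object recovers $\cF$ modulo perfect complexes: the cokernel of $d+h$ is not visibly $i_*\cF$, so you still owe an identification in the singularity category. All of this is repairable, but the step does not go through verbatim.

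The paper's sketch takes a cleaner route that avoids these issues entirely. Smoothness of $M$ makes $M_0 = \phi^{-1}(0)$ Gorenstein, so every object of $D_\sG^{\sg}(M_0)$ is represented by a maximal Cohen--Macaulay sheaf $\cF$ on $M_0$; by Auslander--Buchsbaum the pushforward $i_*\cF$ then has a \emph{two-term} resolution $0 \to \cE_1 \xrightarrow{d_1} \cE_0 \to i_*\cF \to 0$ by $\sG$-equivariant vector bundles on $M$, and the second structure map $d_0$ comes for free from the inclusion $\phi\cdot\cE_0 \subset \ker(\cE_0 \to i_*\cF) = \cE_1$. This produces a preimage under $\fC$ with $\coker(d_1) = i_*\cF$ on the nose, so no further comparison inside the singularity category is required. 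If you keep your totalization approach, you should either supply the curvature correction and the identification of the cokernel, or replace it by the MCM two-term-resolution argument.
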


\begin{proof}[Proof sketch.]
  We only explain essential surjectivity when $M$ is affine, following
  \cite[Theorem 3.9]{Orlov2004}, which will suffice for the discussion
  in \S\ref{sec:Dcrit-functoriality}.

  Let $M_0 \coloneqq \phi^{-1}(0)$ for short, and
  $i\colon M_0 \hookrightarrow M$ be the embedding. Smoothness of $M$
  means $M_0$ is Gorenstein, and then one shows:
  \begin{itemize}
  \item every object in $D_\sG^{\crit}(M, \phi)$ is isomorphic to the
    image, under the projection map, of a (maximal Cohen--Macaulay)
    sheaf $\cF \in \cat{Coh}_\sG(M_0)$;
  \item the sheaf $i_*\cF \in \cat{Coh}_\sG(M)$ has a two-term
    resolution
    $0 \to \cE_1 \xrightarrow{d_1} \cE_0 \xrightarrow{f} i_*\cF \to 0$
    by vector bundles $\cE_i \in \cat{Vect}_\sG(M)$.
  \end{itemize}
  Since $\phi$ acts by zero on $i_*\cF$, there is an inclusion
  $d_0\colon \phi \cdot \cE_0 \hookrightarrow \ker(f) = \cE_1$. This
  completes $\cE_1 \xrightarrow{d_1} \cE_0$ into a matrix
  factorization.
\end{proof}

\subsubsection{}
\label{sec:Dcrit-functoriality}

We review some functors on $\cat{MF}_\sG(M, \phi)$. They induce
derived functors on $\cat{DMF}_\sG(M, \phi)$. See \cite[\S
  3]{Ballard2014} for details.
\begin{itemize}
\item Any $\sG$-equivariant morphism $f\colon M \to N$ induces a
  (functorial) pullback
  \begin{align*}
    f^*\colon \cat{MF}_\sG(N, \phi) &\to \cat{MF}_\sG(M, \phi\circ f) \\
    (\cE_\bullet, d) &\mapsto (f^*\cE_\bullet, f^*d)
  \end{align*}
  since pullback is exact on vector bundles.

\item Any proper $\sG$-equivariant morphism $f\colon M \to N$ induces
  a (functorial) pushforward
  \begin{align*}
    f_*\colon \cat{MF}_\sG(M, \phi \circ f) &\to \cat{MF}_\sG^{\cat{Coh}}(N, \phi) \cong \cat{MF}_\sG(N, \phi) \\
    (\cE_\bullet, d) &\mapsto (f_*\cE_\bullet, f_*d)
  \end{align*}
  since $f_*$ preserves coherence. To be clear, the notation
  $f_*\cE_\bullet$ here means to apply the non-derived functor
  $f_*\colon \cat{Coh}_\sG(M) \to \cat{Coh}_\sG(N)$ to each term in
  $\cE_\bullet$.

\item Given two potentials $\phi$ and $\psi$ on $M$, there is a tensor
  product
  \[ \otimes\colon \cat{MF}_\sG(M, \phi) \otimes \cat{MF}_\sG(M, \psi) \to \cat{MF}_\sG(M, \phi + \psi). \]
\end{itemize}
It is clear from the proof of
Theorem~\ref{thm:Dcrit-as-matrix-factorizations} that any reasonable
definition of these functors must be compatible with the pre-existing
ones in $D^{\crit}_\sG$ (or some enlargement like
$D^b\cat{Qcoh}_\sG/D^b\cat{Vect}_\sG$) under the equivalence $\fC$.
So, from here on, we stop distinguishing between $D^{\crit}_\sG$ and
$\cat{DMF}_\sG$ and freely switch between the two.

\subsubsection{}
\label{sec:dimensional-reduction}

\begin{theorem}[Dimensional reduction, \cite{Isik2013}] \label{thm:dimensional-reduction}
  Let $\pi\colon E \to X$ be a vector bundle on a smooth variety, and
  $Z \coloneqq s^{-1}(0)^{\text{derived}} \subset X$ be the derived zero
  locus of a section $s \in H^0(E)$. Then
  \[ D^b\cat{Coh}(Z) \simeq D^{\crit}_{\bC^\times}(E^\vee, \phi) \]
  where $\phi\colon E^\vee \to \bC$ is given by
  $\phi(x, f) \coloneqq f(s(x))$ for $x \in X$ and $f \in E^\vee_x$,
  and $\bC^\times$ acts by dilation on $E^\vee$.
\end{theorem}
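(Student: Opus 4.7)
The plan is to realize both sides as categories of modules over Koszul-dual curved dg-algebras on $X$, and invoke a Koszul duality equivalence. Concretely, the Koszul dg-algebra $(\wedge^\bullet E^\vee, \iota_s)$ on $X$, which encodes $Z$, is dual to the curved algebra $(\Sym^\bullet E, \phi)$ on $X$, which encodes matrix factorizations on $E^\vee$; the main work is in passing this algebraic duality through the appropriate coherent/derived categories on both sides.

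First, by Example~\ref{ex:derived-zero-locus}, $\cO_Z = (\wedge^\bullet E^\vee, \iota_s)$ as a dg-algebra on $X$, with $E^\vee$ placed in cohomological degree $-1$ and the Koszul differential $\iota_s$ of degree $+1$. Hence $D^b\cat{Coh}(Z)$ identifies with the bounded derived category of coherent dg-modules over this algebra on $X$. On the right hand side, the projection $\pi\colon E^\vee \to X$ is affine, so $\pi_*$ gives an equivalence between $\bC^\times$-equivariant coherent sheaves on $E^\vee$ and $\bZ$-graded $\Sym^\bullet E$-modules on $X$, where $E$ sits in the grading degree dictated by the dilation weight $\kappa$. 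Under this equivalence, Theorem~\ref{thm:Dcrit-as-matrix-factorizations} identifies $D^{\crit}_{\bC^\times}(E^\vee, \phi)$ with the homotopy category of $\bZ$-graded curved $\Sym^\bullet E$-modules on $X$ with curvature $\phi = s \in \Sym^1 E$, modulo contractible ones.

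The core of the proof is the Koszul duality equivalence between bounded coherent dg-modules over $(\wedge^\bullet E^\vee, \iota_s)$ and $\bZ$-graded curved modules over $(\Sym^\bullet E, s)$. Explicitly, the functor sends a dg-module $M$ to $M \otimes_{\cO_X} \Sym^\bullet E$ equipped with the twisted differential $d_M \otimes 1 + \sum_i e_i^\vee \otimes e_i$ built using a local basis $\{e_i\}$ of $E$ with dual basis $\{e_i^\vee\}$ of $E^\vee$; the squared differential picks up $\iota_s$ from the left factor as curvature $s$ on the right factor. I would verify this first for $X$ affine and $E$ trivial, where this is the classical Positselski-type bar-cobar equivalence between a Koszul dg-algebra and its curved quadratic dual, and then globalize by noting that both constructions are local on $X$ and the equivalence is compatible with open restriction.

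The main obstacle is matching boundedness/coherence conditions on both sides so the equivalence restricts correctly to $D^b\cat{Coh}(Z)$ and to the singularity-category quotient $D^{\crit}_{\bC^\times}(E^\vee, \phi)$. Smoothness of $X$ is essential here: it makes $\cat{Perf}_{\bC^\times}(\phi^{-1}(0))$, which is quotiented out on the right, correspond exactly to acyclic/contractible Koszul objects on the left, which are invisible in $D^b\cat{Coh}(Z)$ to begin with. The $\bC^\times$-equivariance, via the weight $\kappa$, is what permits working with $\bZ$-graded rather than merely $\bZ/2$-graded modules, eliminating the infinite-tail convergence issues that typically complicate ungraded Koszul duality.
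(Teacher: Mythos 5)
Your proposal takes a genuinely different route from the one the paper sketches. The paper follows Isik: it never introduces curved algebras, but instead applies the linear Koszul duality of Mirkovi\'c--Riche to the dg-algebra $\cB = \Sym(\kappa\cO_X \xrightarrow{s} \cE) \simeq \pi_*\cO_W$ of functions on the derived hypersurface $W = \phi^{-1}(0)^{\text{derived}}$, whose Koszul dual is $\cO_Z[t]$ with the auxiliary variable $t$ dual to the weight $\kappa$; the quotient by $\cat{Perf}_{\gr}(\cB)$ defining the singularity category is then realized as the localization $D^b_{\gr}\cat{Coh}(\cO_Z[t]) \to D^b_{\gr}\cat{Coh}(\cO_Z[t^\pm]) = D^b\cat{Coh}(Z)$, i.e.\ as inverting $t$. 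You instead invoke the matrix factorization presentation first and apply curved (quadratic-dual) Koszul duality between $(\wedge^\bullet E^\vee, \iota_s)$ and $(\Sym^\bullet E, s)$, in the style of Positselski and Preygel. Your computation that the twisted differential squares to the curvature $s$, and your observation that the $\bZ$-grading coming from the dilation action eliminates the convergence problems of ungraded Koszul duality, are both correct and identify the right reasons the construction works.

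The trade-off is that the step you label ``the main obstacle'' is where essentially all of the content sits, and your proposal asserts rather than proves it. Concretely: (a) you must show that the bar-type functor $M \mapsto (M \otimes_{\cO_X} \Sym^\bullet E,\, d_M \otimes 1 + \sum_i e_i^\vee \otimes e_i)$ sends acyclic dg-modules over the Koszul complex to contractible curved modules, so that it descends from a homotopy category to $D^b\cat{Coh}(Z)$ --- this is the derived-versus-coderived subtlety in curved Koszul duality and is not automatic; and (b) you must establish full faithfulness and essential surjectivity onto coherent graded matrix factorizations, which requires an inverse (cobar or $\cHom$) functor together with a coherence argument using smoothness of $X$. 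Your sentence asserting that $\cat{Perf}_{\bC^\times}(\phi^{-1}(0))$ corresponds to ``acyclic Koszul objects invisible in $D^b\cat{Coh}(Z)$'' conflates the quotient forming $D^{\sg}$, which has already been performed once you pass to the homotopy category of matrix factorizations via Theorem~\ref{thm:Dcrit-as-matrix-factorizations}, with the quotient forming the derived category on the Koszul side; these are kernels of different functors and must be matched separately. The paper's route buys a clean mechanism for exactly this step: under linear Koszul duality, $\cat{Perf}_{\gr}(\cB)$ is identified with the $t$-torsion subcategory $D^b_{\gr}\cat{Coh}(\cO_Z) \subset D^b_{\gr}\cat{Coh}(\cO_Z[t])$, so the Verdier quotient becomes an honest localization. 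Your route is viable, but to close it you would need Positselski's coderived formalism or Preygel's support conditions in place of that one sentence.
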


For completeness, and also to facilitate the discussion in
\S\ref{sec:dimensional-reduction-observations}, we sketch Isik's
original proof of the theorem, written in terms of graded dg-algebras.
For a graded dg-algebra $\cA$, let $D^b_{\gr}\cat{Coh}(\cA)$ (resp.
$\cat{Perf}_{\gr}(\cA)$) be the bounded derived category of graded
coherent (resp. perfect) dg $\cA$-modules, and $D^{\crit}_{\gr}(\cA)
\coloneqq D^b_{\gr}\cat{Coh}(\cA)/\cat{Perf}_{\gr}(\cA)$.

\begin{proof}[Proof sketch.]
  Let $W \coloneqq \phi^{-1}(0)^{\text{derived}} \subset E^\vee$ for
  short. Let $\kappa$ denote the weight of the $\bC^\times$ action,
  which is equivalently a grading on $\cO_W$. So
  $D^{\crit}_{\bC^\times}(E^\vee, \phi) = D^{\crit}_{\gr}(\pi_*\cO_W)$
  by definition. If $E^\vee = \Spec \Sym \cE$, then $\pi_*\cO_W$ is
  quasi-isomorphic to ($\cE$ is in cohomological degree $0$)
  \[ \cB \coloneqq \Sym(0 \to \kappa \cO_X \xrightarrow{s} \cE \to 0) \]
  as sheaves of graded dg-algebras, by applying $\pi_*$ to $0 \to
  \kappa \cO_{E^\vee} \xrightarrow{s} \cO_{E^\vee} \to \cO_W \to 0$.
  By linear Koszul duality \cite{Mirkovic2010}, there is an
  equivalence
  \begin{equation} \label{eq:linear-koszul-duality}
    \begin{tikzcd}[column sep=large]
      D^b_{\gr}\cat{Coh}(\cB) \ar[shift left=2]{r}{\cF \mapsto \cA \otimes_{\cO_X} \cF^\vee} & D^b_{\gr}\cat{Coh}(\cA)^{\op} \ar[shift left=2]{l}{\cB \otimes_{\cO_X} \cG^\vee \mapsfrom \cG}
    \end{tikzcd}
  \end{equation}
  for the Koszul dual (with $\cE^\vee$ in cohomological degree $1$,
  and $t$ the Koszul dual of $\kappa$)
  \begin{align*}
    \cA &\coloneqq \Sym(0 \to \cE^\vee \xrightarrow{-s^\vee} t \cO_X \to 0) \\
        &= \wedge^\bullet \cE^\vee \otimes_{\cO_X} \cO_X[t] \cong \cO_Z[t],
  \end{align*}
  which is nothing more than the Koszul resolution of $\cO_Z[t]$. One
  checks easily that the equivalence identifies
  $\cat{Perf}_{\gr}(\cB) \simeq D^b_{\gr}\cat{Coh}(\cO_Z)^{\op}$. But
  \begin{equation} \label{eq:linear-koszul-dual-of-Dcrit}
    D^b_{\gr}\cat{Coh}(\cO_Z[t])/D^b_{\gr}\cat{Coh}(\cO_Z) \simeq D^b_{\gr}\cat{Coh}(\cO_Z[t^\pm]) = D^b\cat{Coh}(Z)
  \end{equation}
  where $\simeq$ is a sort of Quillen localization for
  $D^b_{\gr}\cat{Coh}$, and $=$ is tautological since graded
  $\cO_Z[t^\pm]$-modules are just $\cO_Z$-modules. The $\op$ in
  \eqref{eq:linear-koszul-duality} can be removed by applying the
  equivalence $R\!\cHom(-, \cO_Z)$.
\end{proof}

\subsubsection{}
\label{sec:dimensional-reduction-observations}

We make three important observations about the proof of
Theorem~\ref{thm:dimensional-reduction}, all of which are already
present in \cite{Toda2019}.

First, the $\bC^\times$-weight of the potential $\phi$ is the weight
$\kappa$ in Definition~\ref{def:critical-K-theory}, and for
Theorem~\ref{thm:dimensional-reduction} to hold, it is important that
$\kappa$ is non-trivial.

Second, since Koszul duality works $\sG$-equivariantly, everything in
Theorem~\ref{thm:dimensional-reduction} can be made $\sG$-equivariant
so long as the potential $\phi$ is $\sG$-invariant (but not
$\bC^\times$-invariant). Therefore the induced isomorphism $K_\sG(Z)
\cong K_{\sG \times \bC^\times}^{\crit}(E^\vee, \phi)$ is an
isomorphism of $\bk_\sG$-modules, not just of $\bZ$-modules.

Finally, the $\bC^\times$-equivariance was really only necessary for
the last equality in \eqref{eq:linear-koszul-dual-of-Dcrit}. Passing
to Grothendieck K-groups makes it entirely unnecessary, since
\[ K_{\sG \times \bC^\times}(\cO_Z[t^\pm]) = K_{\sG}(\cO_Z) \cong K_{\sG}(\cO_Z[t^\pm]). \]
The isomorphism comes from the long exact sequence $\cdots \to
K_{\sG}(\cO_Z) \xrightarrow{i_*} K_{\sG}(\cO_Z[t]) \to
K_{\sG}(\cO_Z[t^\pm]) \to 0$, where the map $i_*$ is in fact zero
since the coordinate $t$ has trivial $\sG$-weight, followed by the
Thom isomorphism $K_{\sG}(\cO_Z[t]) \cong K_{\sG}(\cO_Z)$. Neither of
these steps hold in $D^b\cat{Coh}$. (This was also observed in
\cite[Corollary 3.13]{Toda2023}.) Put differently, in critical
K-theory, we are allowed to specialize to $\kappa = 1$.

The conclusion is the {\it K-theoretic dimensional reduction}
statement that
\begin{equation} \label{eq:k-theoretic-dimensional-reduction}
  K_{\sG \times \bC^\times}^{\crit}(E^\vee, \phi) \cong K_{\sG}(Z) \cong K_{\sG}^{\crit}(E^\vee, \phi).
\end{equation}
By the discussion in \S\ref{sec:dg-stuff}, the derived zero locus $Z$
can be replaced here by the classical zero locus $Z^{\cl}$ with no
effect, which we freely do henceforth.

\subsubsection{}
\label{sec:dimensional-reduction-zero-potential}

A trivial case of dimensional reduction is when $E = X$ and $\phi = 0$
is identically zero:
\[ D^b\cat{Coh}_\sG(X) \simeq D_{\sG \times \bC^\times}^{\crit}(X, 0). \]
This equivalence is given by {\it totalization} on objects, i.e.
$(\cF^\bullet, d) \mapsto [\bigoplus_i \cF^{2i} \to \bigoplus_i
  \cF^{2i+1}]$ with maps in the matrix factorization given by $d$, and
the cohomological grading on the left hand side corresponds to the
grading by $\bC^\times$-weight on the right hand side. K-theoretic
dimensional reduction in this case says
\[ K_\sG(X) \cong K_\sG^{\crit}(X, 0). \]
In particular, $K_\sG^{\crit}(X, \phi)$ is a $K_\sG(X)$-module by
tensor product.

\subsubsection{}

Here is the prototypical example of critical K-theory and dimensional
reduction, a mild generalization of which is the {\it Kn\"orrer
  periodicity} $K^{\crit}_\sG(X \times \bC^2, \phi \boxplus xy) \cong
K^{\crit}_\sG(X, \phi)$.

\begin{example}
  Consider $\bC^2$, with coordinates $x$ and $y$, as the trivial line
  bundle $E^\vee$ over the $x$-axis $X \coloneqq \bC^1$. In the
  notation of Theorem~\ref{thm:dimensional-reduction}, let
  \[ s(x) = x, \qquad \phi(x, y) = xy. \]
  Let $\sT \coloneqq (\bC^\times)^2$ scale $x$ and $y$ with weights
  $t_1$ and $t_2$ respectively, so that $\kappa = t_1t_2$ is the
  $\sT$-weight of $\phi$. Set $\sA \coloneqq \ker \kappa \subset \sT$.

  In this setting, we can check K-theoretic dimensional reduction by
  computing the modules in
  \eqref{eq:k-theoretic-dimensional-reduction} explicitly. Let
  $Z \coloneqq \{x=0\} \subset X$ and
  $W \coloneqq \{xy=0\} \xhookrightarrow{i} \bC^2$. Then clearly
  \[ K_\sA(Z) = \bk_\sA. \]
  By considering the regular immersions $\{x=0\} \subset W$ and
  $\{y=0\} \subset W$, we claim
  \begin{equation} \label{eq:nodal-singularity-k-group}
    K_\sT(W) = \frac{\bk_\sT \cO_{\{x=0\}} \oplus \bk_\sT \cO_{\{y=0\}}}{\bk_\sT \cdot \left((1 - t_2) \cO_{\{x=0\}} - (1 - t_1) \cO_{\{y=0\}}\right)}.
  \end{equation}
  Indeed, a simple support argument shows $\cO_{\{x=0\}}$ and
  $\cO_{\{y=0\}}$ generate, and the relation is because both sides of
  the minus sign equal $\cO_0$. To show no other relations exist, use
  that $i_*\colon K_\sT(W) \to K_\sT(\bC^2)$ is injective since
  $i^*i_* = 1 - \kappa$ is a non-zerodivisor, and their images in
  $K_\sT(\bC^2) \cong \bk_\sT$ clearly satisfy no other relations.
  Finally, the only vector bundles on $W$ arise from $\cO_W$, which
  sits in the short exact sequence
  \[ 0 \to t_1 \cO_{\{y=0\}} \xrightarrow{x} \cO_W \to \cO_{\{x=0\}} \to 0. \]
  The result is that
  \[ K^{\crit}_\sT(\bC^2, xy) = \frac{K_\sT(W)}{\bk_\sT \cdot (t_1\cO_{\{y=0\}} - \cO_{\{x=0\}})} \cong \frac{\bk_\sT \cO_{\{y=0\}}}{\bk_\sT \cdot (1 - t_1t_2) \cO_{\{y=0\}}}. \]
  This is obviously isomorphic to $K^{\crit}_\sA(\bC^2, xy)$ as well
  as to $K_\sA(Z)$. Indeed, the linear Koszul duality
  \eqref{eq:linear-koszul-duality} identifies $\cO_0 \in K_\sA(Z)$
  with $\cO_{\{y=0\}} \in K_\sT(W)$.

  It is instructive to note, using
  \eqref{eq:nodal-singularity-k-group}, that the canonical map
  $K_\sT^\circ(W) \to K_\sT(W)$ is injective while $K_\sA^\circ(W) \to
  K_\sA(W)$ is not. Indeed, $\cO_W$ is torsion in $K_\sA(W)$:
  specializing to $(t_1, t_2) = (t, t^{-1})$, the relation in
  \eqref{eq:nodal-singularity-k-group} becomes
  \[ (1 - t^{-1})(\cO_{\{x=0\}} + t \cO_{\{y=0\}}) = (1 - t^{-1})\cO_W = 0. \]
\end{example}

\section{Braided multiplicative vertex coalgebras}
\label{sec:braided-coVA}

\subsection{General theory}
\label{sec:braided-coVA-theory}

\subsubsection{}

The main goal of this subsection is to define braided multiplicative
vertex coalgebras (Definition~\ref{def:vertex-coalgebra}). This will
be a synthesis of the multiplicative vertex algebras of \cite[\S
  3]{Liu2022} with the quantum vertex algebras of \cite{Etingof2000}
and with the vertex coalgebras of \cite{Hubbard2009}. Some judicious
notation and nomenclature originate from the latter.

In particular, the definition will be almost a categorical dual of the
notion of (non-equivariant, reduced) multiplicative vertex algebra in
\cite[\S 3]{Liu2022}. As with ordinary vertex algebras, see e.g.
\cite{Frenkel2004}, most of the complexity comes from a careful
treatment of the underlying (Laurent) series rings and modules.

\subsubsection{}

\begin{definition} \label{def:coVA-holomorphic-series}
  Let $R$ be a commutative ring and $V$ be an $R$-module. For a formal
  variable $z$, let
  \begin{equation} \label{eq:coVA-modules}
    V\pseries*{(1-z)^{-1}} \subset V\lseries*{(1-z)^{-1}} = V\pseries*{(1-z)^{-1}}[z]
  \end{equation}
  be the $R$-modules of $V$-valued formal power series and formal
  Laurent series in $(1-z)^{-1}$ respectively. We say an element of
  the latter is {\it holomorphic} if it lies in
  \begin{equation} \label{eq:coVA-holomorphic-series}
    V[z^\pm] \subset V\lseries*{(1-z)^{-1}},
  \end{equation}
  identified as an $R$-submodule via the binomial theorem
  \begin{equation} \label{eq:polynomial-subring}
    z^n = (1 - (1-z))^n = \sum_{k \ge 0} \binom{n}{k} (-1)^{n-k} (1 - z)^{n-k}.
  \end{equation}
  This also identifies the $R$-submodule $V[z^\pm] \subset
  V\lseries{(1-z^{-1})^{-1}}$ of holomorphic elements.
\end{definition}

If $V$ is actually an $R$-algebra, then all modules above also become
$R$-algebras.

\subsubsection{}

\begin{remark}
  Many objects in this subsection, morally, live on the multiplicative
  group $\bC^\times$ on which $z$ (or, later, $w$) is a coordinate,
  and will be analogues of pre-existing objects on the additive group
  $\bC$ whose coordinate we denote $u$ (or, later, $v$). Over $\bQ$,
  these variables are related by $z = \exp(u)$ and $w = \exp(v)$. For
  instance, under this identification,
  \begin{equation} \label{eq:series-rings-under-log}
    \bQ\pseries*{1-z} \cong \bQ\pseries*{u} = \bQ\pseries*{-u} \cong \bQ\pseries*{1-z^{-1}}
  \end{equation}
  since $1 - z = 1 - e^u = -u(1 + O(u))$ is a multiple of $u$ by a
  unit in $\bQ\pseries{u}$. Note that holomorphic elements
  \eqref{eq:coVA-holomorphic-series} have no poles in $z \in
  \bC^\times$, as the terminology suggests.
\end{remark}

\subsubsection{}

\begin{definition} \label{def:multiplicative-expansion}
  Let
  \begin{align}
    \iota_z\colon \bZ\lseries*{(1-zw)^{-1}} &\to \bZ[w^\pm]\lseries*{(1-z)^{-1}} \nonumber \\
    (1-zw)^n &\mapsto w^n \sum_{k \ge 0} (-1)^k \binom{n}{k} (1-w^{-1})^k (1-z)^{n-k} \label{eq:multiplicative-expansion}
  \end{align}
  denote the injective ring homomorphism which is uniquely
  characterized by the condition
  \[ (1 - zw) \iota_z (1 - zw)^{-1} = 1. \]
  The right hand side of \eqref{eq:multiplicative-expansion} can be
  viewed as an expansion of $((1-w) + w(1-z))^n$ using the binomial
  theorem. Since
  \[ \iota_z(zw) = 1 - \left((1 - w) + w(1 - z)\right) = w\left(1 - (1 - z)\right), \]
  clearly $\iota_z$ preserves the sub-ring $\bZ[(zw)^\pm]$ of
  holomorphic elements. Given an $R$-module $V$, we continue to use
  $\iota_z$ to denote the induced $R$-module homomorphism
  \[ \iota_z\colon V\lseries*{(1-zw)^{-1}} \to V[w^\pm]\lseries*{(1-z)^{-1}}. \]
  We refer to $\iota_z$ as {\it expansion} in the codomain
  $V[w^\pm]\lseries{(1-z)^{-1}}$. This name is because, analytically,
  it arises from series expansion in the domain $|1 - w^{-1}| < |1 -
  z|$.

  This is the multiplicative analogue of the ring homomorphism
  $\iota_u\colon \bZ\pseries{(u - v)^{-1}} \to \bZ[v]\pseries{u^{-1}}$
  given by series expansion in the domain $|u| > |v|$.
\end{definition}

\subsubsection{}

\begin{definition} \label{def:vertex-coalgebra}
  Let $R$ be a commutative ring. A {\it braided multiplicative vertex
    $R$-coalgebra} is the data of:
  \begin{enumerate}
  \item an $R$-module $V$ of {\it states} with a distinguished {\it
      covacuum} $\vac \in V^*$;
  \item a {\it translation operator} $D(z)\colon V \to V[z^\pm]$ that
    is {\it multiplicative}, i.e. $D(z)D(w) = D(zw)$;
  \item a {\it vertex coproduct} $\Y(z)\colon V \to (V \otimes
    V)\lseries*{(1-z)^{-1}}$;
  \item a {\it half-braiding operator} $C(z) \in \Hom(V \otimes V, (V
    \otimes V)\pseries*{(1-z)^{-1}})[z]$ (see
    \S\ref{sec:vertex-coalgebra-braiding-operator}).
  \end{enumerate}
  We write $(V, \vac, D, \Y, C)$ for short. This data must satisfy the
  following axioms for any $a \in V$:
  \begin{enumerate}
  \item (covacuum) letting $\cdots$ denote terms which vanish at
    $z=1$,
    \begin{alignat*}{3}
      (\vac \otimes \id) \Y(z)a &= a, \qquad \; (\id \otimes \vac)\Y(z)a &&= a + \cdots \in V[z^\pm], \\
      (\vac \otimes \id) C(z) &= \vac \otimes \id, \quad (\id \otimes \vac) C(z) &&= \id \otimes \vac;
    \end{alignat*}

  \item (skew symmetry) $C(z) \Y(z)a$ and $\sigma_{12} C(z^{-1})
    \Y(z^{-1}) D(z)a$ are holomorphic and are equal in $(V \otimes
    V)[z^\pm]$, where $\sigma_{ij}$ denote the map which swaps the
    $i$-th and $j$-th tensor factors;
    
  \item (weak coassociativity) $(\Y(z) \otimes \id) \Y(w)a \equiv (\id
    \otimes \Y(w)) \Y(zw)a$, where $\equiv$ means that both sides are
    expansions, in their respective domains, of the same element of
    \begin{equation} \label{eq:weak-coassociativity-parent}
      (V \otimes V \otimes V)\pseries*{(1 - z)^{-1}, (1 - w)^{-1}, (1 - zw)^{-1}}[z, w];
    \end{equation}

  \item (Yang--Baxter relations) $C(w) \otimes \id$ and $\id \otimes
    C(z)$ commute, and, for any $b \in V \otimes V$,
    \begin{align}
      \sigma_{12}(\id \otimes \Y(z)) C(zw)b &\equiv (\id \otimes C(zw))\sigma_{12}(C(w) \otimes \id)(\id \otimes \Y(z))b, \label{eq:yang-baxter-coproduct-1} \\
      \sigma_{23}(\Y(z) \otimes \id) C(w)b &\equiv (C(zw) \otimes \id)\sigma_{23}(\id \otimes C(w))(\Y(z) \otimes \id)b. \label{eq:yang-baxter-coproduct-2}
    \end{align}
  \end{enumerate}
  The vertex coalgebra is {\it holomorphic} if actually $\Y(z)a$ and
  $C(z)b$ belong to $(V \otimes V)[z^\pm]$, for all $a \in V$ and $b
  \in V \otimes V$.
\end{definition}

In what follows, the term {\it vertex (co)algebra} refers to our
braided and multiplicative version by default, and the original notion
of vertex (co)algebra is called {\it additive}.

\subsubsection{}

To be precise regarding weak associativity, first observe that
\begin{align*}
  (\Y(z) \otimes \id)\Y(w) a &\in (V \otimes V \otimes V)\lseries*{(1-z)^{-1}}\lseries*{(1-w)^{-1}}, \\
  (\id \otimes \Y(w))\Y(zw)a &\in (V \otimes V \otimes V)\lseries*{(1-w)^{-1}}\lseries*{(1-zw)^{-1}},
\end{align*}
so they are not immediately comparable. Weak associativity means to
compare them using the expansions (induced from
Definition~\ref{def:multiplicative-expansion})
\begin{align*}
  \iota_w\colon &V^{\otimes 3}\pseries*{(1-z)^{-1}, (1-w)^{-1}, (1-zw)^{-1}}[z, w] \\
  &\hookrightarrow V^{\otimes 3}\pseries*{(1-z)^{-1}}[z^\pm]\pseries*{(1-w)^{-1}}[w] = V^{\otimes 3}\lseries*{(1-z)^{-1}}\lseries*{(1-w)^{-1}}, \\
  \iota_{zw}\colon &V^{\otimes 3}\pseries*{(1-z)^{-1}, (1-w)^{-1}, (1-zw)^{-1}}[z, w]  \\
  &\hookrightarrow V^{\otimes 3}\pseries*{(1-w)^{-1}}[w^\pm]\pseries*{(1-zw)^{-1}}[zw] = V^{\otimes 3}\lseries*{(1-w)^{-1}}\lseries*{(1-zw)^{-1}}.
\end{align*}
This is completely analogous to what happens for additive vertex
algebras, where the relevant expansions are the ring embeddings
\[ \bZ\lseries{u}\lseries{v} \hookleftarrow \bZ\left[(u-v)^{-1}\right] \hookrightarrow \bZ\lseries{v}\lseries{u}. \]

\subsubsection{}
\label{sec:vertex-coalgebra-braiding-operator}

To be precise regarding the Yang--Baxter axiom, first observe that the
half-braiding operator $C$ can equivalently be viewed as an operator
\[ C(z)\colon V \otimes V \to (V \otimes V)\lseries*{(1-z)^{-1}} \]
with the finiteness condition that it has uniformly lower-bounded
valuation in $(1-z)^{-1}$, i.e.
\[ C(z)b \in (1-z)^N \cdot (V \otimes V)\pseries*{(1-z)^{-1}} \]
for some constant $N \in \bZ$ independent of $b \in V \otimes V$. This
finiteness condition ensures that compositions in the Yang--Baxter
axiom are well-defined. For instance,
\[ C(w)\colon (V \otimes V)\lseries*{(1-z)^{-1}} \to (V \otimes V)\pseries*{(1-z)^{-1}, (1-w)^{-1}}[z, w] \]
instead of taking values in the much larger module $(V \otimes
V)\lseries{(1-w)^{-1}}\lseries{(1-z)^{-1}}$. Hence the left and right
hand sides of \eqref{eq:yang-baxter-coproduct-1} are elements
\begin{align*}
  \sigma_{12}(\id \otimes \Y(z)) C(zw)b &\in V^{\otimes 3}\lseries*{(1-z)^{-1}}\lseries*{(1-zw)^{-1}}, \\
  (\id \otimes C(zw))\sigma_{12}(C(w) \otimes \id)(\id \otimes \Y(z))b &\in V^{\otimes 3}\pseries*{(1-z)^{-1}, (1-w)^{-1}, (1-zw)^{-1}}[z, w],
\end{align*}
and can therefore be compared by expanding $(1 - w)^n$ using
$\iota_{zw}$. Similarly the left and right hand sides of
\eqref{eq:yang-baxter-coproduct-2} can be compared by expanding
$(1-zw)^n$ using $\iota_w$.

\subsubsection{}

Here is some motivation for Definition~\ref{def:vertex-coalgebra},
particularly those aspects which are not obviously categorical duals
of some aspect of vertex algebras \cite[\S 3]{Liu2022} and not simply
multiplicative analogues of some aspect of additive vertex coalgebras
\cite{Hubbard2009}.

First, we explain the translation operator and the vertex coproduct.
Recall that for vertex algebras, the translation operator and vertex
product are homomorphisms
\[ D(z)\colon V \to V\pseries{1-z}, \qquad Y(-, z)\colon V \otimes V \to V\lseries{1-z} \]
where the target of $D(z)$ is the sub-module $V\pseries{1-z} \subset
V\lseries{1-z}$ of series ``holomorphic'' at $z=1$. In the additive
case, the vertex product $Y(-, u)$ takes values in $\lseries{u}$ while
the vertex coproduct $\Y(u)$ is its categorical dual and takes values
in $\lseries{u^{-1}}$. Hence, for our vertex coalgebras, the
translation operator and vertex coproduct must be homomorphisms
\[ D(z)\colon V \to V[z^\pm], \qquad \Y(z)\colon V \to (V \otimes V)\lseries*{(1-z)^{-1}}, \]
where the target of $D(z)$ is the sub-module $V[z^\pm] \subset
V\lseries{(1-z)^{-1}}$ that we identified in
Definition~\ref{def:coVA-holomorphic-series}, consisting of series
``holomorphic'' at $z=1$.

\subsubsection{}

The (half-)braiding operator is a new and necessary feature, not
present in vertex algebras or in the additive setting. Recall that for
vertex algebras, the skew-symmetry axiom is
\begin{equation} \label{eq:vertex-algebra-skew-symmetry-axiom}
  Y(a, z) b = D(z) Y(b, z^{-1}) a.
\end{equation}
This equality is valid because of the ring isomorphism
\begin{align}
  \bZ\pseries*{1-z} &\cong \bZ\pseries*{1-z^{-1}} \label{eq:vertex-algebra-skew-symmetry-rings} \\
  1 - z &\mapsto -z(1 - z^{-1}) = -(1 - (1 - z^{-1}))^{-1} (1 - z^{-1})\nonumber
\end{align}
given, for instance, by forgetting the intermediate steps in
\eqref{eq:series-rings-under-log}. Note that $1 - (1-z^{-1}) \in
\bZ\pseries{1-z^{-1}}$ is a unit, so its inverse is well-defined.

On the other hand, for vertex coalgebras, the difficulty is that the
categorical dual of \eqref{eq:vertex-algebra-skew-symmetry-axiom}
requires us to compare
\[ \Y(z) \in (V \otimes V)\lseries*{(1-z)^{-1}}, \qquad \Y(z^{-1}) D(z) \in (V \otimes V)\lseries*{(1-z^{-1})^{-1}}, \]
but, in contrast to the situation in
\eqref{eq:vertex-algebra-skew-symmetry-rings}, there is no analogous
isomorphism between the rings $\bZ\pseries{(1-z)^{-1}}$ and
$\bZ\pseries{(1-z^{-1})^{-1}}$, not even over $\bQ$: the desired
identification is
\[ \bZ\pseries{(1-z)^{-1}} \ni (1 - z)^{-1} \mapsto 1 - (1 - z^{-1})^{-1} \in \bZ\pseries{(1-z^{-1})^{-1}}, \]
but the right hand side is a unit while the left hand side is not.
Instead, the half-braidings $C(z)$ and $C(z^{-1})$ are used to map the
two sides into their common sub-module $(V \otimes V)[z^\pm]$, where
they may be compared.

To emphasize, unlike for vertex algebras, there appears to be no
canonical notion of ``unbraided'' vertex coalgebra.

\subsubsection{}

\begin{remark} \label{rem:quantum-VA}
  Various notions of braiding for additive vertex algebra have
  previously appeared in the literature, for instance
  \cite{Etingof2000}. Often, such vertex algebras are ``quantum'' in
  the sense that there is an extra grading by the quantum parameter
  $\hbar$ which must be included as part of the defining axioms, and
  there is a (typically non-cocommutative) braiding operator
  \begin{equation} \label{eq:R-matrix-form}
    S_\hbar(z)\colon V \otimes V \to V \otimes V \otimes R_\hbar\lseries*{\cdots},
  \end{equation}
  where $R_\hbar$ is some $R$-algebra containing $\hbar$ and $\cdots$
  depends on how one chooses to expand in the spectral parameter $z$.
  Being a braiding operator means $S_\hbar(z)$ must satisfy the
  Yang--Baxter equation
  \[ (S_{\hbar}(z) \otimes \id)(\id \otimes S_{\hbar}(zw))(S_{\hbar}(w) \otimes \id) = (\id \otimes S_{\hbar}(w))(S_{\hbar}(zw) \otimes \id)(\id \otimes S_{\hbar}(z)). \]

  We refrain from using the words ``quantum'' and ``R-matrix'' for the
  following reasons. In our setup, in light of the skew symmetry
  axiom, the braiding operator should correspond to
  \[ S(z) \coloneqq C(z)^{-1} \sigma_{12} C(z^{-1}). \]
  But $C(z)$ is not required to be invertible in any sense, nor does
  it necessarily involve a parameter $\hbar$. Furthermore, even if
  $C(z)$ were invertible, $C(z)^{-1}$ is a series in $(1-z)^{-1}$
  while $C(z^{-1})$ is a series in $(1-z^{-1})^{-1}$ and such a
  composition is typically not well-defined. Finally, asking for
  $C(z)$, and therefore $S(z)$, to be an operator of the form
  \eqref{eq:R-matrix-form} is a much stronger condition than what we
  imposed in Definition~\ref{def:vertex-coalgebra}, because
  \[ V \otimes R\lseries{\cdots} \subsetneq V\lseries{\cdots} \]
  is a proper submodule. In particular, the half-braiding operators
  constructed in \S\ref{sec:quiver-coVA} will not be of the form
  \eqref{eq:R-matrix-form}.
\end{remark}

\subsubsection{}

\begin{proposition}[cf. {\cite[Lemma 3.2.5]{Liu2022}}] \label{prop:translation-colocality}
  Let $(V, \vac, D, Y, C)$ be a vertex coalgebra. For all $a \in V$:
  \begin{enumerate}
  \item (translation) $\Y(z) D(w)a \equiv (\id \otimes D(w)) \Y(zw)a$;
  \item (colocality) $(\iota_z C(z/w) \otimes \id) (\id \otimes
    \Y(w))\Y(z)a \equiv \sigma_{12}(\iota_w C(w/z) \otimes \id)(\id \otimes
    \Y(z))\Y(w)a$.
  \end{enumerate}
\end{proposition}

\begin{proof}
  Applying $\id \otimes \vac$ to the skew symmetry axiom gives $(\id
  \otimes \vac)\Y(z)a = D(z)a$. Using this followed by weak
  coassociativity,
  \begin{align*}
    \Y(z) D(w)a
    &= (\id \otimes \id \otimes \vac)(\Y(z) \otimes \id) \Y(w)a \\
    &\equiv (\id \otimes \id \otimes \vac)(\id \otimes \Y(w))\Y(zw)a
    = (\id \otimes D(w))\Y(zw)a.
  \end{align*}
  Similarly, applying $\id \otimes \vac \otimes \id$ to weak
  coassociativity gives $(D(z) \otimes \id)\Y(w)a \equiv \Y(zw)a$,
  also called {\it translation covariance}. Using this, weak
  coassociativity and skew symmetry,
  \begin{align*}
    (\iota_z C(z/w) \otimes \id) (\id \otimes \Y(w))\Y(z)a
    &\equiv (C(z/w) \otimes \id) (\Y(z/w) \otimes \id)\Y(w)a \\
    &= \sigma_{12}(C(w/z) \otimes \id) (\Y(w/z) D(z/w) \otimes \id)\Y(w)a \\
    &\equiv \sigma_{12}(C(w/z) \otimes \id) (\Y(w/z) \otimes \id)\Y(z)a \\
    &\equiv \sigma_{12}(\iota_w C(w/z) \otimes \id) (\id \otimes \Y(z)) \Y(w)a.
  \end{align*}
  Note that weak coassociativity says both sides of the first $\equiv$
  are expansions of
  \[ (C(z/w) \otimes \id) f_a \in (V \otimes V \otimes V)\pseries*{(1-z)^{-1}, (1-w)^{-1}, (1-z/w)^{-1}}[z/w, w] \]
  for some element $f_a$ in the same module, and so $C(z/w)$ must also
  be expanded in the appropriate domains, whence the $\iota_z$ on the
  left hand side. The $\iota_w$ on the right hand side of the last
  $\equiv$ arises from similar considerations.
\end{proof}

\subsubsection{}

\begin{remark}
  If one assumes that the half-braiding operators are invertible, then
  translation and colocality, along with the covacuum and Yang--Baxter
  axioms, together imply skew symmetry and weak coassociativity. This
  is a converse of Proposition~\ref{prop:translation-colocality}.
  Therefore, skew symmetry and weak coassociativity may be replaced by
  translation and colocality, forming an alternate set of defining
  axioms for vertex coalgebras. We will not use this; some details can
  be found in \cite[Proposition 1.4]{Etingof2000}.
\end{remark}

\subsubsection{}
\label{sec:coVA-on-graded-module}

Later, $V = \bigoplus_{\alpha \in A} V(\alpha)$ will be graded by a
monoid $A$ such that $\#\{\alpha_1, \alpha_2 \in A : \alpha_1 +
\alpha_2 = \alpha\} < \infty$ for any $\alpha \in A$, and this grading
will be compatible with all the operators forming the vertex
coalgebra. Namely, the covacuum, translation operator, vertex
coproduct and half-braiding operator will split into components
\begin{align*}
  \vac_\alpha\colon V(\alpha) &\to R \\
  D_\alpha(z)\colon V(\alpha) &\to V(\alpha)[z^\pm] \\
  \Y_{\alpha,\beta}(z)\colon V(\alpha+\beta) &\to (V(\alpha) \otimes V(\beta))\lseries*{(1-z)^{-1}}, \\
  C_{\alpha,\beta}(z)\colon V(\alpha) \otimes V(\beta) &\to (V(\alpha) \otimes V(\beta))\lseries*{(1-z)^{-1}},
\end{align*}
and it suffices to write the vertex coalgebra axioms for each graded
piece. For instance, weak coassociativity is $(\Y_{\alpha,\beta}(z)
\otimes \id) \Y_{\alpha+\beta,\gamma}(w) \equiv (\id \otimes
\Y_{\beta,\gamma}(w)) \Y_{\alpha,\beta+\gamma}(zw)$ for all $\alpha,
\beta \in A$.

This grading is distinct from the usual grading (by conformal
dimension) on an additive vertex algebra, where different
$u$-coefficients of $Y_{n,m}(u)\colon V_n \otimes V_m \to
V\lseries{u}$ land in {\it different} graded pieces of $V =
\bigoplus_n V_n$.

\subsection{On various quiver moduli}
\label{sec:quiver-coVA}

\subsubsection{}

\begin{definition} \label{def:moduli-stacks}
  Let $Q$ be a quiver with vertices indexed by $i \in I$ and edges
  denoted by $e\colon i \to j$. For a dimension vector $\alpha =
  (\alpha_i)_i \in \bZ_{\ge 0}^{|I|}$, let
  \begin{align*}
    M_Q(\alpha) &\coloneqq \prod_{e\colon i \to j} \Hom(k^{\alpha_i}, k^{\alpha_j}) \\
    \GL(\alpha) &\coloneqq \prod_i \GL(\alpha_i), \qquad \lie{gl}(\alpha) \coloneqq \prod_i \End(\alpha_i)
  \end{align*}
  so that $\fM_Q(\alpha) \coloneqq [M_Q(\alpha)/\GL(\alpha)]$ is the
  moduli stack of representations of $Q$ of dimension $\alpha$. Write
  $\fM_Q \coloneqq \bigsqcup_\alpha \fM_Q(\alpha)$. Note that
  $\fM_Q(0) = \pt$.

  Given $Q$, let $Q^{\doub}$ be the associated {\it doubled} quiver,
  with the same vertex set but with a ``dual'' edge
  $e^*\colon j \to i$ added for each edge $i \to j$ in the original
  $Q$. Similarly, obtain the {\it tripled} quiver $Q^{\trip}$ from
  $Q^{\doub}$ by adding an extra loop $i \to i$ for each vertex
  $i \in I$. Then
  \begin{align*}
    \fM_{Q^{\doub}}(\alpha) &= [T^*M_Q(\alpha) / \GL(\alpha)] \\
    \fM_{Q^{\trip}}(\alpha) &= [T^*M_Q(\alpha) \times \lie{gl}(\alpha) / \GL(\alpha)].
  \end{align*}
  Let $x \in M_Q(\alpha)$, $x^* \in M_Q(\alpha)^*$, and
  $x^\circ \in \lie{gl}(\alpha)$ be coordinates.
  
  Since $Q$ is usually clear from context, we abbreviate
  $\fM^{\doub} \coloneqq \fM_{Q^{\doub}}$ and
  $\fM^{\trip} \coloneqq \fM_{Q^{\trip}}$ and omit writing the
  subscripts $Q$ in $\fM_Q$ and $M_Q$.
\end{definition}

\subsubsection{}

\begin{definition} \label{def:quiver-stacks-equivariance}
  Let
  \[ \sA \coloneqq (\bC^\times)^{\#\text{ edges}} \]
  act on $M(\alpha)$, and therefore on $\fM(\alpha)$, by scaling the
  linear maps corresponding to the edges of the quiver $Q$. The
  induced symplectic $\sA$-action on $T^*M(\alpha)$, and therefore on
  $\fM^{\doub}(\alpha)$, can be augmented by a $\bC^\times_{\hbar}$
  which scales the $M_Q(\alpha)^*$ directions, and therefore the
  symplectic form, with weight $\hbar$. Set
  \[ \sT \coloneqq \sA \times \bC^\times_{\hbar}. \]
  Finally, let $\bC^\times_{\hbar}$ scale the $\lie{gl}(\alpha)$
  directions in $\fM^{\trip}(\alpha)$ with weight $\hbar^{-1}$; this
  is necessary for the $\sT$-invariance of the potential
  \eqref{eq:Mtrip-potential} later.
\end{definition}

\subsubsection{}
\label{sec:quiver-moduli-K-theory}

\begin{definition}
  Set
  \[ K_\sT(\fM) \coloneqq \bigoplus_\alpha K_\sT(\fM(\alpha)) \]
  and similarly for localized, critical, etc. K-groups. Here we let
  the $\bC_\hbar^\times$ factor act trivially unless the quiver $Q$ is
  a doubled or tripled quiver. Let $\cV_{\alpha,i}$ be the
  tautological bundle of the $i$-th vertex in $\fM(\alpha)$, pulled
  back from $[\pt/\GL(\alpha_i)]$ along the obvious projection. We
  have
  \begin{equation} \label{eq:quiver-moduli-k-theory}
    K_\sT^\circ(\fM(\alpha)) \cong K_\sT(\fM(\alpha)) \cong K_{\sT \times \GL(\alpha)}(\pt) \eqqcolon \bk_\sT[s_{\alpha,i,j} : i \in I, \, 1 \le j \le \alpha_i]^{S(\alpha)},
  \end{equation}
  where $S(\alpha) \coloneqq \prod_{i \in I} S_{\alpha_i}$ with
  $S_{\alpha_i}$ acting by permutation on the variables
  $\{s_{\alpha,i,j}\}_j$. Each $s_{\alpha,i,j}$ represents a line
  bundle, and in K-theory $\cV_{\alpha,i} = \sum_j s_{\alpha,i,j}$. 
\end{definition}

Equivariant K-theory typically does not have a K\"unneth theorem, but
from \eqref{eq:quiver-moduli-k-theory}, clearly
\begin{equation} \label{eq:kunneth-property-moduli-stacks}
  \boxtimes\colon K_\sT(\fM(\alpha)) \otimes_{\bk_\sT} K_\sT(\fM(\beta)) \xrightarrow{\sim} K_\sT(\fM(\alpha) \times \fM(\beta))
\end{equation}
is an isomorphism of $\bk_\sT$-modules.

\subsubsection{}

\begin{definition}
  On $\fM(\alpha) \times \fM(\beta)$, let (with the first term in
  degree zero)
  \begin{equation} \label{eq:bilinear-bundle-moduli-stack}
    \cE_{\alpha,\beta} \coloneqq \bigg[\bigoplus_i \cV_{\alpha,i}^\vee \boxtimes \cV_{\beta,i} \xrightarrow{\Xi} \bigoplus_{i \to j} \cV_{\alpha,i}^\vee \boxtimes \cV_{\beta,j}\bigg]
  \end{equation}
  where, if $\xi_{\alpha, i \to j}\colon \cV_{\alpha,i} \to
  \cV_{\alpha,j}$ is the universal morphism of the edge $i \to j$,
  then
  \[ \Xi \coloneqq \bigoplus_{i \to j} \left(\id \boxtimes \xi_{\beta, i \to j} - \xi_{\alpha, i \to j}^* \boxtimes \id\right). \]
  This is the ``bilinear'' version of the tangent complex
  $\bT_{\fM(\alpha)}$. In particular, $\bT_{\fM(\alpha)} =
  \Delta^*\cE_{\alpha,\alpha}[1]$ for the diagonal embedding
  $\Delta\colon \fM(\alpha) \to \fM(\alpha) \times \fM(\alpha)$. Note
  that edges may carry non-trivial $\sT$-weights which we did not
  explicitly write in \eqref{eq:bilinear-bundle-moduli-stack}, cf. the
  explicit formula \eqref{eq:bilinear-bundle-Mtrip} for $\fM^{\trip}$.

  A slightly different geometric characterization of
  $\cE_{\alpha,\beta}$, more natural from the perspective of Hall
  algebras, is given in
  Lemma~\ref{lem:bilinear-bundle-as-relative-tangent}.
\end{definition}

\subsubsection{}
\label{sec:theta-expansion}

\begin{definition}
  Given a line bundle $\cL$ on a space $X$, define the formal series
  \[ \frac{1}{1 - z\cL} \coloneqq \cL^\vee \sum_{k \ge 0} (1 - z)^{-k-1} (1 - \cL^\vee)^k \in K_\sG^\circ(X)\lseries*{(1-z)^{-1}} \]
  cf. \eqref{eq:multiplicative-expansion}. It is an inverse to $1 -
  z\cL = \wedge_{-z}^\bullet(\cL)$ in its domain. Extend this
  multiplicatively to $K_\sG^\circ(X)$: if $\cE_1, \cE_2$ are
  $\sG$-equivariant vector bundles,
  \[ \wedge_{-z}^\bullet(\cE_1 - \cE_2) \coloneqq \wedge_{-z}^\bullet(\cE_1) \otimes \prod_\cL \frac{1}{1 - z\cL} \]
  where the product ranges over (K-theoretic) Chern roots $\cL$ of
  $\cE_2$. On $\fM(\alpha) \times \fM(\beta)$, define
  \[ \Theta_{\alpha,\beta}(z) \coloneqq \wedge^\bullet_{-z}(\cE_{\alpha,\beta}^\vee). \]
  Its inverse is clearly $\Theta_{\alpha,\beta}(z)^{-1} =
  \wedge^\bullet_{-z}(-\cE_{\alpha,\beta}^\vee)$.
\end{definition}

\subsubsection{}
\label{sec:monoid-and-action}

The stack $\fM$ is a monoid object with $[\pt/\bC^\times]$-action,
meaning that it admits:
\begin{itemize}
\item an associative {\it direct sum} map
  $\Phi_{\alpha,\beta}\colon \fM(\alpha) \times \fM(\beta) \to \fM(\alpha+\beta)$,
  given on points by $([x], [y]) \mapsto [x \oplus y]$ and on
  stabilizer groups by
  $(f, g) \mapsto \begin{psmallmatrix}f & 0\\0 & g\end{psmallmatrix}$;
  \item a compatible {\it scaling automorphism} map
    $\Psi_\alpha\colon [\pt/\bC^\times] \times \fM(\alpha) \to \fM(\alpha)$,
    given on points by the identity and on stabilizer groups by
    $(\lambda, f) \mapsto \lambda f$.
\end{itemize}
The torus $\sT$ acts trivially on $[\pt/\bC^\times]$. The action
$\Psi_\alpha$ induces the following grading on $K_\sT(\fM(\alpha))$.

\subsubsection{}

\begin{definition}
  Let $K([\pt/\bC^\times]) \eqqcolon \bZ[z^\pm]$. The {\it grading
    operator} associated to $\Psi_\alpha$ is
  \[ z^{\deg}\colon K_\sT(\fM(\alpha)) \xrightarrow{\Psi_\alpha^*} K_\sT([\pt/\bC^\times] \times \fM(\alpha)) \cong K_\sT(\fM(\alpha))[z^\pm]. \]
  Here, the identification $\cong$ is because the $\bC^\times$-action
  on $\pt \times \fM(\alpha) = \fM(\alpha)$ is trivial; it can also be
  viewed as a K\"unneth theorem for products with $[\pt/\bC^\times]$.
  
  On a product like $\fM(\alpha) \times \fM(\beta)$, let $\Psi_\alpha$
  act on only the $i$-th factor to get grading operators $z^{\deg_i}$.
  For instance, $z^{\deg} \cV_{\alpha,i} = z$ for any $\alpha$ and
  $i$, and so
  \[ z^{\deg_1} \cE_{\alpha,\beta} = z^{-1}, \qquad z^{\deg_2} \cE_{\alpha,\beta} = z. \]
  In what follows, we treat $z$ as a formal variable, forgetting its
  geometric origin as a line bundle on $[\pt/\bC^\times]$.
\end{definition}

\subsubsection{}

\begin{theorem} \label{thm:coVA-construction-general}
  $K_\sT(\fM)$ has a vertex $\bk_\sT$-coalgebra structure. In the
  notation of \S\ref{sec:coVA-on-graded-module}:
  \begin{enumerate}
  \item the covacuum is $\vac_0 = \id$ and $\vac_\alpha = 0$ for
    $\alpha \neq 0$;
  \item the translation operator is $D(z) \coloneqq z^{\deg}$;
  \item the vertex coproduct is
    \begin{equation} \label{eq:vertex-coproduct}
      \Y_{\alpha,\beta}(z) \coloneqq \Theta_{\alpha,\beta}(z) \otimes z^{\deg_1} \Phi_{\alpha,\beta}^*;
    \end{equation}
  \item the half-braiding operator $C_{\alpha,\beta}(z)$ is
    multiplication by $\Theta_{\alpha,\beta}(z)^{-1}$.
  \end{enumerate}
\end{theorem}

Ignoring the half-braiding operator, this is, almost verbatim, a
dualized (in the coalgebra sense) version of the construction
\cite[Theorem 3.3.5]{Liu2022} of a multiplicative vertex algebra
structure on the operational K-homology of moduli stacks, which itself
is based on the original constructions in \cite{Joyce2021,Gross2022}.
As such, most of the proof of the theorem is formally identical to a
dualized version of the original proof, and will occupy the remainder
of this subsection.

\subsubsection{}

\begin{remark} \label{rem:coVA-construction-general-braiding}
  In Remark~\ref{rem:quantum-VA}, we observed that the skew-symmetry
  axiom suggests the ill-defined ``braiding operator''
  \[ C_{\beta,\alpha}(z)^{-1} C_{\alpha,\beta}(z^{-1}) = \Theta_{\beta,\alpha}(z) \Theta_{\alpha,\beta}(z^{-1})^{-1}. \]
  Motivated by the identity of rational functions $1/(1-x) =
  -x^{-1}/(1-x^{-1})$, we may instead consider the well-defined
  operator
  \[ S_{\alpha,\beta}(z) \coloneqq (-z)^{\rank \cE_{\alpha,\beta}} \det(\cE_{\alpha,\beta}) \Theta_{\beta,\alpha}(z) (\Theta_{\alpha,\beta}(z)^\vee)^{-1}. \]
  We refer to $S_{\alpha,\beta}(z)$ as the {\it braiding operator}
  associated to the vertex coalgebra. It will play an important role
  in the main compatibility Theorems~\ref{thm:compatibility-general}
  and \ref{thm:compatibility-preprojective}.
\end{remark}

\subsubsection{}

\begin{proof}[Proof of Theorem~\ref{thm:coVA-construction-general}.]
To begin, we first observe that the pullback $\Phi_{\alpha,\beta}^*$
in the vertex coproduct \eqref{eq:vertex-coproduct} is well-defined.
This is because
\[ K_\sT(\fM(\alpha)) \cong K_\sT^\circ(\fM(\alpha)) \]
by smoothness of $M(\alpha)$, and arbitrary pullbacks exist for
$K_\sT^\circ$. Furthermore, the codomain of the pullback is correct
because of the K\"unneth property
\eqref{eq:kunneth-property-moduli-stacks}.

This may seem like a pedantic remark, but, in the similar construction
of \S\ref{sec:preprojective-coVA}, the existence of
$\Phi_{\alpha,\beta}^*$ will be the primary technical issue.

\subsubsection{}

Many of the vertex coalgebra axioms will follow almost formally from
corresponding properties of $\Theta_{\alpha,\beta}(z)$ which we
collect here. First, in K-theory, $\cE_{\alpha,0} = 0 =
\cE_{0,\alpha}$, coming from the formula
\eqref{eq:bilinear-bundle-moduli-stack}, which implies that
\begin{equation} \label{eq:theta-normalization}
  \Theta_{0,\alpha}(z) = \Theta_{\alpha,0}(z) = 1.
\end{equation}
Second, the formula \eqref{eq:bilinear-bundle-moduli-stack} for
$\cE_{\alpha,\beta}$ is bilinear and weight $\pm 1$ in its factors, in
the sense that
\begin{equation} \label{eq:bundle-bilinearity}
  \begin{alignedat}{3}
    (\Phi_{\alpha,\beta} \times \id)^*(\cE_{\alpha+\beta,\gamma}) &= \pi_{13}^*(\cE_{\alpha,\gamma}) \oplus \pi_{23}^*(\cE_{\beta,\gamma}) \qquad &&(\Psi_\alpha \times \id)^*(\cE_{\alpha,\beta}) &&= \pi_1^*(\cL^\vee) \otimes \pi_{23}^*(\cE_{\alpha,\beta}) \\
    (\id \times \Phi_{\beta,\gamma})^*(\cE_{\alpha,\beta+\gamma}) &= \pi_{12}^*(\cE_{\alpha,\beta}) \oplus \pi_{13}^*(\cE_{\alpha,\gamma}) \qquad &&(\id \times \Psi_\beta)^*(\cE_{\alpha,\beta}) &&= \pi_2^*(\cL) \otimes \pi_{13}^*(\cE_{\alpha,\beta})
  \end{alignedat}
\end{equation}
where $\pi_i$ and $\pi_{ij}$ are projections and
$\cL \in K([\pt/\bC^\times])$ is the weight-$1$ representation. Hence
\begin{equation} \label{eq:theta-bilinearity}
  \begin{alignedat}{3}
    (\Phi_{\alpha,\beta} \times \id)^*\Theta_{\alpha+\beta,\gamma}(z) &= \Theta_{\alpha,\gamma}(z) \otimes \Theta_{\beta,\gamma}(z) \qquad &&w^{\deg_1}\Theta_{\alpha,\beta}(z) &&= \iota_z \Theta_{\alpha,\beta}(zw) \\
    (\id \times \Phi_{\beta,\gamma})^*\Theta_{\alpha,\beta+\gamma}(z) &= \Theta_{\alpha,\beta}(z) \otimes \Theta_{\alpha,\gamma}(z) \qquad &&w^{\deg_2}\Theta_{\alpha,\beta}(z) &&= \iota_z \Theta_{\alpha,\beta}(z/w)
  \end{alignedat}
\end{equation}
using Lemma~\ref{lem:theta-deg} below. Here and henceforth we omit the
pullbacks $\pi_{ij}^*$ to avoid clutter.

\subsubsection{}

\begin{lemma} \label{lem:theta-deg}
  If $\cL$ is a line bundle such that $w^{\deg} \cL = w \cL$, then
  \[ w^{\deg} \frac{1}{1 - z\cL} = \iota_z \frac{1}{1 - zw \cL}. \]
\end{lemma}

\begin{proof}
  Since $(1-zw)^{-1}$ is equal to $\iota_z (1 - zw)^{-1}$ on a
  non-trivial analytic neighborhood,
  \begin{align*}
    \iota_z (1 - zw)^{-k-1}
    &= w^{-k} \frac{\partial_z^k}{k!} \iota_z (1 - zw)^{-1} \\
    &= \frac{w^{-k-1}}{k!} \sum_{j \ge 0} \frac{(j+k)!}{j!} (1 - z)^{-j-k-1} (1 - w^{-1})^j
  \end{align*}
  Plug this into $\iota_z (1 - zw\cL)^{-1}$ and apply the binomial
  theorem to conclude.
\end{proof}

\subsubsection{}

\begin{proposition}[Covacuum]
  $D(1) = \id$ and
  \begin{alignat*}{3}
    (\vac \otimes \id)\Y(z) &= \id, \qquad \; (\id \otimes \vac)\Y(z) &&= D(z), \\
    (\vac \otimes \id) C(z) &= \vac \otimes \id, \quad (\id \otimes \vac) C(z) &&= \id \otimes \vac.
  \end{alignat*}
\end{proposition}

\begin{proof}
  Since $\vac$ is only non-zero on $V(0)$, where it is the identity,
  it suffices to check the equations for $\Y_{0,\alpha}$ and
  $\Y_{\alpha,0}$, and $C_{0,\alpha}$ and $C_{\alpha,0}$. This is just
  an exercise in unrolling notation, using
  \eqref{eq:theta-normalization} and that $\Phi_{0,\alpha}^* =
  \Phi_{\alpha,0}^* = \id$.
\end{proof}

\subsubsection{}

\begin{proposition}[Skew symmetry]
  $C_{\alpha,\beta}(z) \Y_{\alpha,\beta}(z) = \sigma_{12}
  C_{\beta,\alpha}(z^{-1}) \Y_{\beta,\alpha}(z^{-1}) D(z)$.
\end{proposition}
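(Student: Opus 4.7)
The plan is to compute both sides directly using the explicit formulas of \S\ref{sec:coVA-ingredients} and observe that they both reduce to the single expression $\Theta^\bullet_{\beta,\alpha}(z^{-1}) \cdot z^{\deg_2}\Phi^*_{\beta,\alpha}\cE$. The key ingredients are the braiding identity \eqref{eq:theta-braiding}, the symmetry $(12)^*\Phi^*_{\alpha,\beta} = \Phi^*_{\beta,\alpha}$, and the compatibility of $\Phi$ with the scaling action $\Psi$. For the lhs, unpacking gives $\Y_{\alpha,\beta}(z)\cE = \Theta^\bullet_{\alpha,\beta}(z) \cdot z^{\deg_1}\Phi^*_{\alpha,\beta}\cE$ by \eqref{eq:vertex-coproduct}, and $\check S_{\alpha,\beta}(z^{-1}) = S_{\beta,\alpha}(z^{-1}) \circ (12)$ where $S_{\beta,\alpha}(z^{-1})$ is multiplication by $\Theta^\bullet_{\beta,\alpha}(z^{-1}) / (12)^*\Theta^\bullet_{\alpha,\beta}(z)$. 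The swap $(12)$ is a ring map on K-theory, so it distributes across the product and carries $\Theta^\bullet_{\alpha,\beta}(z)$ to $(12)^*\Theta^\bullet_{\alpha,\beta}(z)$, the operator $z^{\deg_1}$ to $z^{\deg_2}$, and $\Phi^*_{\alpha,\beta}\cE$ to $\Phi^*_{\beta,\alpha}\cE$ --- the latter because $\Phi_{\alpha,\beta}$ and $\Phi_{\beta,\alpha} \circ (12)$ differ by an inner automorphism of $\GL(\alpha+\beta)$, which acts trivially on equivariant K-theory. Multiplication by $S_{\beta,\alpha}(z^{-1})$ then cancels the $(12)^*\Theta^\bullet_{\alpha,\beta}(z)$ factor against the denominator (this is exactly \eqref{eq:theta-braiding}) and produces $\Theta^\bullet_{\beta,\alpha}(z^{-1}) \cdot z^{\deg_2}\Phi^*_{\beta,\alpha}\cE$.

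For the rhs, $\Y_{\beta,\alpha}(z^{-1})(z^{\deg}\cE) = \Theta^\bullet_{\beta,\alpha}(z^{-1}) \cdot z^{-\deg_1}\Phi^*_{\beta,\alpha}(z^{\deg}\cE)$. The commutation rule $\Phi^*_{\beta,\alpha} \circ z^{\deg} = z^{\deg_1+\deg_2} \circ \Phi^*_{\beta,\alpha}$ follows from equivariance of $\Phi_{\beta,\alpha}$ for the diagonal scaling $\bC^\times$-action, which intertwines $\Psi_\beta \times \Psi_\alpha$ on the source with $\Psi_{\alpha+\beta}$ on the target. Combining $z^{-\deg_1}$ with $z^{\deg_1+\deg_2}$ gives $z^{\deg_2}$, matching the lhs.

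The only subtle point --- more conceptual than obstacle --- is the well-definedness of the lhs: $\check S_{\alpha,\beta}(z^{-1})$ a priori contributes infinitely many positive powers of $z$ against the infinitely many negative powers in $\Y_{\alpha,\beta}(z)\cE$, but the cancellation above is precisely what tames this. The entire $z$-dependent part collapses to $\Theta^\bullet_{\beta,\alpha}(z^{-1})$ times polynomials in $z^\pm$, making the lhs a genuine element of $V(\beta) \otimes V(\alpha)\lseries*{z}$. As noted in \S\ref{sec:braiding-operator}, this is the main motivation for including $\check S$ in the definition of a vertex coalgebra in the first place.
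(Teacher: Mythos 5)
Your proof is correct and follows essentially the same route as the paper's: both sides collapse to $\Theta^\bullet_{\beta,\alpha}(z^{-1}) \otimes z^{\deg_2}\Phi^*_{\beta,\alpha}(-)$ using the commutation $z^{\deg}\Phi^* = \Phi^* z^{\deg}$ with $z^{\deg} = z^{\deg_1}z^{\deg_2}$ and the braiding identity \eqref{eq:theta-braiding}. The extra details you supply (the inner-automorphism justification for $(12)^*\Phi^*_{\alpha,\beta} = \Phi^*_{\beta,\alpha}$ and the remark on well-definedness of the left-hand side) are welcome elaborations the paper leaves implicit, but the argument is the same.
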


\begin{proof}
  The left hand side is $z^{\deg_1}\Phi_{\alpha,\beta}^*$. Since
  $z^{\deg} \Phi^* = \Phi^* z^{\deg}$ and $z^{\deg} = z^{\deg_1}
  z^{\deg_2}$, the right hand side becomes $\sigma_{12} z^{\deg_2}
  \Phi_{\beta,\alpha}^*$. These are obviously equal.
\end{proof}

\subsubsection{}

\begin{proposition}[Weak coassociativity]
  \[ (\Y_{\alpha,\beta}(z) \otimes \id) \Y_{\alpha+\beta,\gamma}(w)a \equiv (\id \otimes \Y_{\beta,\gamma}(w)) \Y_{\alpha,\beta+\gamma}(zw)a. \]
\end{proposition}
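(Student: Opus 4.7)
The plan is to unfold both sides of the identity by the definition \eqref{eq:vertex-coproduct} of $\Y$, reduce each to a common explicit expression on $K_\sT(\fX(\alpha) \times \fX(\beta) \times \fX(\gamma))$ (using the K\"unneth property \eqref{eq:kunneth-property-moduli-stacks}), and conclude by inspection. The three essential ingredients are: the associativity $\Phi_{\alpha+\beta,\gamma} \circ (\Phi_{\alpha,\beta} \times \id) = \Phi_{\alpha,\beta+\gamma} \circ (\id \times \Phi_{\beta,\gamma})$ of the direct sum; the bilinearity of $\Theta^\bullet$ recorded in \eqref{eq:theta-bilinearity}; and the compatibility $\Psi_{\alpha+\beta} \circ (\id \times \Phi_{\alpha,\beta}) = \Phi_{\alpha,\beta} \circ (\Psi_\alpha \times \Psi_\beta)$ with diagonal $\bC^\times$-action, which at the level of grading reads $\Phi^*_{\alpha,\beta} \circ z^{\deg} = z^{\deg_1 + \deg_2} \circ \Phi^*_{\alpha,\beta}$.

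For the left-hand side, applying $\Y_{\alpha+\beta,\gamma}(w)$ first gives $\Theta^\bullet_{\alpha+\beta,\gamma}(w) \cdot w^{\deg_1}\Phi^*_{\alpha+\beta,\gamma}\cE$ on $\fX(\alpha+\beta) \times \fX(\gamma)$. Applying $\Y_{\alpha,\beta}(z) \otimes \id$ then pulls back along $\Phi_{\alpha,\beta} \times \id$, multiplies by $\Theta^\bullet_{\alpha,\beta}(z)$, and twists by $z^{\deg_1}$ on the $V(\alpha)$ factor. Bilinearity in the first argument splits $(\Phi_{\alpha,\beta} \times \id)^*\Theta^\bullet_{\alpha+\beta,\gamma}(w) = \Theta^\bullet_{\alpha,\gamma}(w) \cdot \Theta^\bullet_{\beta,\gamma}(w)$, and associativity of $\Phi$ converts $(\Phi_{\alpha,\beta} \times \id)^*\Phi^*_{\alpha+\beta,\gamma}\cE$ into $(\id \times \Phi_{\beta,\gamma})^*\Phi^*_{\alpha,\beta+\gamma}\cE$. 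The original $w^{\deg_1}$ on $V(\alpha+\beta)$ becomes $w^{\deg_1+\deg_2}$ on $V(\alpha) \otimes V(\beta)$ after this pullback. Finally, commuting $z^{\deg_1}$ through $\Theta^\bullet_{\alpha,\gamma}(w)$ via $z^{\deg_1}\Theta^\bullet_{\alpha,\gamma}(w) = \Theta^\bullet_{\alpha,\gamma}(zw)$, and combining $z^{\deg_1} w^{\deg_1+\deg_2} = (zw)^{\deg_1} w^{\deg_2}$, produces
\[ \Theta^\bullet_{\alpha,\beta}(z) \, \Theta^\bullet_{\alpha,\gamma}(zw) \, \Theta^\bullet_{\beta,\gamma}(w) \cdot (zw)^{\deg_1} w^{\deg_2} \, (\id \times \Phi_{\beta,\gamma})^*\Phi^*_{\alpha,\beta+\gamma}\cE. \]

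For the right-hand side, $\Y_{\alpha,\beta+\gamma}(zw)\cE = \Theta^\bullet_{\alpha,\beta+\gamma}(zw) \cdot (zw)^{\deg_1}\Phi^*_{\alpha,\beta+\gamma}\cE$, and $\id \otimes \Y_{\beta,\gamma}(w)$ then pulls back along $\id \times \Phi_{\beta,\gamma}$, multiplies by $\Theta^\bullet_{\beta,\gamma}(w)$, and twists by $w^{\deg_2}$ on the $V(\beta)$ factor. Bilinearity in the second argument gives $(\id \times \Phi_{\beta,\gamma})^*\Theta^\bullet_{\alpha,\beta+\gamma}(zw) = \Theta^\bullet_{\alpha,\beta}(zw) \cdot \Theta^\bullet_{\alpha,\gamma}(zw)$, and commuting $w^{\deg_2}$ through $\Theta^\bullet_{\alpha,\beta}(zw)$ yields $\Theta^\bullet_{\alpha,\beta}(zw/w) = \Theta^\bullet_{\alpha,\beta}(z)$, while $\Theta^\bullet_{\alpha,\gamma}(zw)$ is unaffected. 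The right-hand side therefore reduces to exactly the same expression as the left.

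The main obstacle is not conceptual but careful bookkeeping of which tensor factor each $\deg_i$ refers to as the various pullbacks reshuffle the factors, and this is governed entirely by the compatibility of $\Psi$ with $\Phi$. Once the substitutions are tracked correctly, both sides literally coincide as elements of the ambient ring of formal series, noting that $\Theta^\bullet_{\alpha,\gamma}(zw)$ is unambiguously the expansion around $zw \to \infty$ regardless of whether it arises by shifting the argument of a $w$-expansion via $z^{\deg_1}$ or directly from a formula with argument $zw$.
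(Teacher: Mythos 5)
Your proposal is correct and follows essentially the same route as the paper's proof: unfold both sides via the definition of $\Y$, apply the two bilinearity identities \eqref{eq:theta-bilinearity} to split the pulled-back $\Theta^\bullet$ factors and shift their arguments by the grading operators, then match the operator parts using $z^{\deg_1}(\Phi_{\alpha,\beta}\times\id)^*w^{\deg_1}=(zw)^{\deg_1}w^{\deg_2}(\Phi_{\alpha,\beta}\times\id)^*$ and the associativity of $\Phi$. Your closing remark on the unambiguity of the expansion of $\Theta^\bullet_{\alpha,\gamma}(zw)$ is a point the paper leaves implicit in its definition of $\equiv$, but otherwise the two arguments coincide.
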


\begin{proof}
  Using the first line of the bilinearity
  \eqref{eq:theta-bilinearity}, the left hand side becomes
  \begin{align*}
    &\Theta_{\alpha,\beta}(z) \otimes z^{\deg_1} (\Phi_{\alpha,\beta} \times \id)^* \left[\Theta_{\alpha+\beta,\gamma}(w) \otimes w^{\deg_1}\Phi^*_{\alpha+\beta,\gamma}a\right] \\
    &\equiv \left(\Theta_{\alpha,\beta}(z) \otimes \Theta_{\alpha,\gamma}(zw) \otimes \Theta_{\beta,\gamma}(w)\right) \otimes \left[z^{\deg_1} (\Phi_{\alpha,\beta} \times \id)^* w^{\deg_1} \Phi^*_{\alpha+\beta,\gamma}a\right].
  \end{align*}
  Similarly, using the second line of \eqref{eq:theta-bilinearity},
  the right hand side becomes
  \begin{align*}
    &\Theta_{\beta,\gamma}(w) \otimes w^{\deg_2} (\id \times \Phi_{\beta,\gamma})^* \left[\Theta_{\alpha,\beta+\gamma}(zw) \otimes (zw)^{\deg_1}\Phi^*_{\alpha,\beta+\gamma}a\right] \\
    &\equiv \left(\Theta_{\beta,\gamma}(w) \otimes \Theta_{\alpha,\beta}(z) \otimes \Theta_{\alpha,\gamma}(zw)\right) \otimes \left[w^{\deg_2} (\id \times \Phi_{\beta,\gamma})^* (zw)^{\deg_1}\Phi^*_{\alpha,\beta+\gamma}a\right].
  \end{align*}
  Finally, $z^{\deg_1}(\Phi \times \id)^* w^{\deg_1} = (zw)^{\deg_1}
  w^{\deg_2} (\Phi \times \id)^*$ while $(zw)^{\deg_1}$ commutes with
  $(\id \times \Phi)^*$. We are done by the associativity of $\Phi$.
\end{proof}

\subsubsection{}

\begin{proposition}[Yang--Baxter relations]
  Multiplication by $\Theta_{\alpha,\beta}(z)^{\pm 1}$ is an operator
  with uniformly lower-bounded valuation in $(1-z)^{-1}$ (see
  \S\ref{sec:vertex-coalgebra-braiding-operator}), and
  \begin{align*}
    \sigma_{12}(\id \otimes \Y_{\beta,\gamma}(z)) C_{\alpha,\beta+\gamma}(zw)b &\equiv (\id \otimes C_{\alpha,\gamma}(zw))\sigma_{12}(C_{\alpha,\beta}(w) \otimes \id)(\id \otimes \Y_{\beta,\gamma}(z))b, \\
    \sigma_{23}(\Y_{\alpha,\beta}(z) \otimes \id) C_{\alpha+\beta,\gamma}(w)b &\equiv (C_{\alpha,\gamma}(zw) \otimes \id)\sigma_{23}(\id \otimes C_{\beta,\gamma}(w))(\Y_{\alpha,\beta}(z) \otimes \id)b.
  \end{align*}
\end{proposition}

\begin{proof}
  The claim about the valuation follows because, by definition,
  $\Theta_{\alpha,\beta}(z)^{\pm 1}$ is a Laurent series in
  $(1-z)^{-1}$. For the Yang--Baxter relations, using the second line
  of the bilinearity \eqref{eq:theta-bilinearity}, the left hand side
  of the first equation becomes
  \begin{align*}
    &\Theta_{\beta,\gamma}(z) \otimes z^{\deg_2}(\id \times \Phi_{\beta,\gamma})^* \left(\Theta_{\alpha,\beta+\gamma}(zw)^{-1} \otimes b\right) \\
    &\equiv \Theta_{\alpha,\beta}(w)^{-1} \otimes \Theta_{\alpha,\gamma}(zw)^{-1} \otimes \Theta_{\beta,\gamma}(z) \otimes z^{\deg_2}(\id \times \Phi_{\beta,\gamma})^* b.
  \end{align*}
  This is manifestly equal to the right hand side. The second equation
  follows similarly.
\end{proof}

This concludes the proof of Theorem~\ref{thm:coVA-construction-general}.
\end{proof}

\subsubsection{}

\begin{remark} \label{rem:theta-choice}
  There is a good amount of freedom in the choice of
  $\Theta_{\alpha,\beta}(z)$; the proof only required the
  bilinearity properties \eqref{eq:bundle-bilinearity}. However, the
  choice given here is the unique one compatible with the K-theoretic
  Hall algebra structure, see \S\ref{sec:compatibility-KHA}.
\end{remark}

\subsection{On the preprojective stack}
\label{sec:preprojective-coVA}

\subsubsection{}

\begin{definition}
  The action of $\GL(\alpha)$ on $T^*M(\alpha)$ is Hamiltonian. Let
  $\mu_\alpha\colon T^*M(\alpha) \to \lie{gl}(\alpha)^*$ be its moment
  map. Explicitly, it is the sum of commutators
  \[ \mu_\alpha(x, x^*) = \sum_{e\colon i \to j} [x_e^*, x_e] \]
  where $x_e$ is the $e$-th component of $x$. Define the {\it
    preprojective stack}
  \begin{equation} \label{eq:preprojective-stack}
    T^*\fM(\alpha) \coloneqq \left[\mu_\alpha^{-1}(0)/\GL(\alpha)\right]
  \end{equation}
  as the cotangent bundle of $\fM$, or, equivalently, as the moduli
  stack of representations of the preprojective algebra of $Q$.

  Note that $T^*\fM$ is still a monoid object with
  $[\pt/\bC^\times]$-action, in the sense of
  \S\ref{sec:monoid-and-action}, but whether its equivariant K-group
  has a K\"unneth property is not immediately obvious.
\end{definition}

\subsubsection{}

\begin{remark} \label{rem:flatness-of-moment-map}
  Following Varagnolo and Vasserot \cite{Varagnolo2022}, the more
  correct object to consider is the ($0$-shifted symplectic
  \cite{Pecharich2012}) dg-stack
  \[ [\mu^{-1}_\alpha(0)^{\text{derived}}/\GL(\alpha)] \]
  where one takes the derived instead of the ordinary zero locus.
  Recall from Example~\ref{ex:derived-zero-locus} that $T^*\fM$ is the
  classical truncation of this dg-stack, and that the two are the same
  if and only if $\mu_\alpha$ is a regular section. The combinatorial
  characterization \cite[Theorem 1.1]{Crawley-Boevey2001} of this
  condition fails in most examples of interest. Nonetheless, their
  equivariant K-groups are equal, see \S\ref{sec:dg-stuff}.
\end{remark}

\subsubsection{}

\begin{theorem} \label{thm:coVA-construction-preprojective}
  There is a vertex $\bk_{\sT,\loc}$-coalgebra structure on
  $K_\sT(T^*\fM_Q)_{\loc}$. In the notation of
  \S\ref{sec:coVA-on-graded-module}:
  \begin{enumerate}
  \item the covacuum is $\vac_0 = \id$ and $\vac_\alpha = 0$ for
    $\alpha \neq 0$;
  \item the translation operator is $D(z) \coloneqq z^{\deg}$;
  \item the vertex coproduct is
    \[ \Y_{\alpha,\beta}(z) \coloneqq \Theta_{\alpha,\beta}^{\trip}(z) \otimes z^{\deg_1} \Phi_{\alpha,\beta}^* \]
    where
    \[ \Theta^{\trip}_{\alpha,\beta}(z) \coloneqq \wedge^\bullet_{-z}(\cE_{\alpha,\beta}^{\trip,\vee}) \]
    is defined using the bilinear element $\cE_{\alpha,\beta}^{\trip}$
    for the tripled quiver $Q^{\trip}$;
  \item the half-braiding operator $C_{\alpha,\beta}(z)$ is
    multiplication by $\Theta_{\alpha,\beta}^{\trip}(z)^{-1}$.
  \end{enumerate}
\end{theorem}

This is the analogue of Theorem~\ref{thm:coVA-construction-general}
for $T^*\fM$. Like in Remark~\ref{rem:theta-choice}, the definition of
$\Theta_{\alpha,\beta}^{\trip}(z)$ here is the unique one compatible
with the Hall algebras of \S\ref{sec:compatibility-KHA}.

\subsubsection{}
\label{sec:coVA-preprojective-issues}

There are two issues which need to be addressed, in the remainder of
this subsection, after which the proofs of
Theorems~\ref{thm:coVA-construction-general} and
\ref{thm:coVA-construction-preprojective} are formally identical.
\begin{enumerate}
\item The K\"unneth property \eqref{eq:kunneth-property-moduli-stacks}
  is no longer completely clear. The most obvious way to obtain it
  (Lemma~\ref{lem:kunneth-property-preprojective-stack}) requires base
  change to $\bk_{\sT,\loc}$. Without this localization, it is unclear
  whether there is still a K\"unneth isomorphism; partial results in
  this direction are recorded in
  Appendix~\ref{sec:kunneth-properties}.

\item More severely, since $T^*\fM$ is in general singular,
  $K_\sT(T^*\fM(\alpha)) \neq K_\sT^\circ(T^*\fM(\alpha))$ and the
  pullback $\Phi_{\alpha,\beta}^*$ is not obviously well-defined. The
  solution
  (\S\ref{sec:preprojective-vertex-coproduct-from-critical-K-theory})
  is to realize $K_\sT(T^*\fM(\alpha))$ as a critical K-group by
  dimensional reduction
  (Lemma~\ref{lem:Mtrip-dimensional-reduction}), and critical
  K-groups have pullbacks along arbitrary morphisms
  (\S\ref{sec:Dcrit-functoriality}).
\end{enumerate}

\subsubsection{}

\begin{definition} \label{def:endomorphism-stacks}
  Let $\fN(\alpha) \coloneqq [\nu^{-1}(0)/G(\alpha)]$
  where
  \begin{align*}
    \nu\colon M(\alpha) \times \lie{gl}(\alpha) &\to M(\alpha) \\
    (x, x^\circ) &\mapsto \sum_{e\colon i \to j} \left(x_e x^\circ_i - x^\circ_j x_e\right)
  \end{align*}
  where $x_e$ and $x_i^\circ$ are the $e$-th and $i$-th component of
  $x$ and $x^\circ$ respectively. In other words, $\fN(\alpha)$ is the
  moduli stack of $(x, x^\circ)$ where $x \in \fM(\alpha)$ and
  $x^\circ$ is an endomorphism of $x$. Write
  \[ [N^{\nil}(\alpha) / \GL(\alpha)] \eqqcolon \fN^{\nil}(\alpha) \subset \fN(\alpha) \coloneqq [N(\alpha) / \GL(\alpha)] \]
  where $\fN^{\nil}(\alpha)$ is the closed substack where $x^\circ$ is
  nilpotent.
\end{definition}

\subsubsection{}

\begin{lemma} \label{lem:Mtrip-dimensional-reduction}
  Consider $\fM^{\trip}(\alpha)$ with the (clearly
  $\GL(\alpha)$-invariant) potential
  \begin{equation} \label{eq:Mtrip-potential}
    \phi_\alpha(x, x^*, x^\circ) \coloneqq \sum_i \sum_e \tr\left(x_e x_e^* x^\circ_i - x_e^* x_e x^\circ_i\right).
  \end{equation}
  Then, by K-theoretic dimensional reduction
  \eqref{eq:k-theoretic-dimensional-reduction},
  \begin{equation} \label{eq:Mtrip-dimensional-reduction}
    K_\sT(T^*\fM(\alpha)) \cong K_\sT^{\crit}(\fM^{\trip}(\alpha), \phi_\alpha) \cong K_\sT(\fN(\alpha))
  \end{equation}
  as $\bk_{\sT \times \GL(\alpha)}$-modules.
\end{lemma}

Definition~\ref{def:quiver-stacks-equivariance} for the action of
$\bC^\times_{\hbar} \subset \sT$ on $\fM^{\trip}$ was made precisely
so that $\phi_\alpha$ is $\sT$-invariant.

\begin{proof}
  Clearly $\phi_\alpha$ is linear in each of $x$, $x^*$, and
  $x^\circ$. So K-theoretic dimensional reduction may be applied in
  two different ways:
  \begin{itemize}
  \item to the $\lie{gl}(\alpha)$-bundle $\fM^{\trip}(\alpha) \to
    \fM^{\doub}(\alpha)$, which has fiber coordinate $x^\circ$,
    viewing
    \[ \phi_\alpha(x, x^*, x^\circ) = \sum_i \sum_e \tr\left([x_e, x^*_e] x^\circ_i\right); \]
  \item to the $M(\alpha)^*$-bundle $\fM^{\trip}(\alpha) \to
    \fN(\alpha)$, which has fiber coordinate $x^*$, viewing
    \[ \phi_\alpha(x, x^*, x^\circ) = \sum_i \sum_e \tr\left([x_e, x^\circ_i] x_e^* \right). \]
  \end{itemize}
  The results are the first and second isomorphisms in
  \eqref{eq:Mtrip-dimensional-reduction} respectively.
\end{proof}

\subsubsection{}
\label{sec:preprojective-vertex-coproduct-from-critical-K-theory}

It is clear that the direct sum map $\Phi_{\alpha,\beta}\colon
\fM^{\trip}(\alpha) \times \fM^{\trip}(\beta) \to
\fM^{\trip}(\alpha+\beta)$ on $\fM^{\trip}$ satisfies
$\phi_{\alpha+\beta} \circ \Phi_{\alpha,\beta} = \phi_\alpha \boxplus
\phi_\beta$. We can therefore use
\[ \begin{tikzcd}
    K_\sT^{\crit}(\fM^{\trip}(\alpha+\beta), \phi_{\alpha+\beta}) \ar{r}{\sim}[swap]{\text{dim. red.}} \ar{d}{\Phi_{\alpha,\beta}^*} & K_\sT(T^*\fM(\alpha+\beta)) \ar[dashed]{d} \\
    K_\sT^{\crit}(\fM^{\trip}(\alpha) \times \fM^{\trip}(\beta), \phi_\alpha \boxplus \phi_\beta) \ar{r}{\sim}[swap]{\text{dim. red.}} & K_\sT(T^*\fM(\alpha) \times T^*\fM(\beta)) 
  \end{tikzcd} \]
to define the dashed arrow, which we still denote
$\Phi_{\alpha,\beta}^*$ in a mild abuse of notation. Because
\[ \cE_{\alpha,\beta}^{\trip} \in K_\sT(\fM_\alpha^{\trip} \times \fM_\beta^{\trip}) \cong \bk_{\sT \times \GL(\alpha) \times \GL(\beta)}, \]
and K-theoretic dimensional reduction is linear with respect to this
ring, it is compatible with multiplication by
$\Theta_{\alpha,\beta}^{\trip}(z)$, and all the necessary bilinearity
properties \eqref{eq:theta-bilinearity} of $\Theta(z)$ are preserved.

\subsubsection{}
\label{sec:kunneth-property-preprojective-stack}

\begin{lemma} \label{lem:kunneth-property-preprojective-stack}
  The external tensor product
  \[ \boxtimes\colon K_\sT(T^*\fM(\alpha))_{\loc} \otimes_{\bk_{\sT,\loc}} K_\sT(T^*\fM(\beta))_{\loc} \to K_\sT(T^*\fM(\alpha) \times T^*\fM(\beta))_{\loc} \]
  is an isomorphism.
\end{lemma}

\begin{proof}
  This follows from \cite[Lemma 2.4.1]{Varagnolo2022}. We sketch a
  slight modification of their main idea, for the reader's
  convenience. By Lemma~\ref{lem:Mtrip-dimensional-reduction},
  $T^*\fM$ may be replaced by $\fN$. This stack has the advantage that
  \[ \boxtimes\colon K_\sT(\fN^{\nil}(\alpha)) \otimes_{\bk_\sT} K_\sT(\fN^{\nil}(\beta)) \to K_\sT(\fN^{\nil}(\alpha) \times \fN^{\nil}(\beta)) \]
  is an isomorphism (see Appendix~\ref{sec:kunneth-properties}). We
  claim that all $\bC^\times_{\hbar}$-fixed points in $\fN$ lie within
  $\fN^{\nil}(\alpha)$. This is because any such fixed point $(x,
  x^\circ)$ must, by definition, have an associated $1$-parameter
  subgroup $g(\lambda)\colon \bC_\hbar^\times \to \GL(\alpha)$ such
  that
  \[ (x, \lambda x^\circ) = (g(\lambda) x g(\lambda)^{-1}, g(\lambda) x^\circ g(\lambda)^{-1}). \]
  In particular, $\lambda x^\circ_i = g(\lambda)_i x^\circ_i
  g(\lambda)_i^{-1}$ where $g(\lambda)_i$ is the $i$-th component of
  $g(\lambda)$. When $\lambda \neq 1$, this is only possible if
  $x^\circ_i$ is nilpotent. Hence \cite{Aranha2024a} all (higher)
  $\sT$-equivariant K-theory groups of $\fN(\alpha) \setminus
  \fN^{\nil}(\alpha)$ are torsion and so
  \[ K_\sT(\fN^{\nil}(\alpha))_{\loc} \cong K_\sT(\fN(\alpha))_{\loc}. \qedhere \]
\end{proof}

\subsubsection{}

\begin{remark} \label{rem:coVA-general}
  More generally, one can take a quiver $Q$ with potential $W \in \bC
  Q/[\bC Q, \bC Q]$ and try to make $\bigoplus_\alpha
  K^{\crit}_\sT(\fM_Q(\alpha), \tr W_\alpha)$ into a vertex coalgebra
  following the exact same recipe as in
  Theorem~\ref{thm:coVA-construction-preprojective}. This works as
  long as there is a K\"unneth isomorphism
  \begin{align*}
    \boxtimes\colon &K_\sT^{\crit}(\fM_Q(\alpha), \tr W_\alpha) \otimes_{\bk_\sT} K_\sT^{\crit}(\fM_Q(\beta), \tr W_\beta) \\
                    &\xrightarrow{\sim} K_\sT^{\crit}(\fM_Q(\alpha) \times \fM_Q(\beta), \tr W_\alpha \boxplus \tr W_\beta).
  \end{align*}
  Non-equivariantly, i.e. with $\bZ$- instead of $\bk_\sT$-modules,
  this K\"unneth property always holds at the level of the singularity
  categories $D^{\crit}$, which is a Thom--Sebastiani-type theorem
  \cite[Theorem 5.15]{Ballard2014}. However, for various reasons, it
  does not always remain an isomorphism after passing to $K_0(-)$. For
  $Q^{\trip}$ in particular, we sidestepped this issue in
  \S\ref{sec:kunneth-property-preprojective-stack} by localization.

  However, we emphasize that the lack of a K\"unneth isomorphism is
  morally unimportant. Indeed, the proof of
  Theorem~\ref{thm:coVA-construction-general} works fine using
  \[ \Y_{\alpha,\beta}(z)\colon K_\sT^{\crit}(\fM_Q(\alpha+\beta), \tr W_{\alpha+\beta}) \to K_\sT^{\crit}(\fM_Q(\alpha) \times \fM_Q(\beta), \tr W_\alpha \boxplus \tr W_\beta)\lseries*{(1-z)^{-1}} \]
  and similarly for $C_{\alpha,\beta}(z)$, with some minor adjustments
  to notation. Then base change to localized K-groups is no longer
  necessary. The only technical caveat is that this is not a coproduct
  in the traditional sense of a map $V \to V \otimes V$.

  More importantly, the bilinear element $\cE_{\alpha,\beta}$ must be
  the one for $\fM_Q$ for the compatibility results of
  \S\ref{sec:compatibility-KHA} to hold. Note that although it
  consists of vector bundles, it is treated as an element of $\bk_{\sT
    \times \GL(\alpha) \times \GL(\beta)}$, and so in the $\bk_{\sT
    \times \GL(\alpha) \times \GL(\beta)}$-module
  $K_\sT^{\crit}(\fM_Q(\alpha) \times \fM_Q(\beta), \tr W_\alpha
  \boxplus \tr W_\beta)$, multiplication by $\cE_{\alpha,\beta}$ is
  non-zero in general.
\end{remark}

\section{The preprojective vertex bialgebra}
\label{sec:compatibility-KHA}

\subsection{Some Hall algebras}
\label{sec:HAs}

\subsubsection{}

\begin{definition}
  Let $\fM = \bigoplus_\alpha \fM(\alpha)$ be a moduli stack of
  objects in some abelian category. There is an associated {\it Ext
    stack}
  \[ \fM(\alpha, \beta) \coloneqq \{A \hookrightarrow B \twoheadrightarrow C\} \subset \fM(\alpha) \times \fM(\alpha+\beta) \times \fM(\beta) \]
  parameterizing short exact sequences, with natural projections
  \begin{equation} \label{eq:ext-stack-projections}
    \fM(\alpha) \times \fM(\beta) \xleftarrow{q_{\alpha,\beta}} \fM(\alpha,\beta) \xrightarrow{p_{\alpha,\beta}} \fM(\alpha+\beta)
  \end{equation}
  We often omit the subscripts on $p$ and $q$ when they are irrelevant
  or unambiguous. Suppose a torus $\sT$ acts on $\fM$, and there are
  well-defined maps $q^*$ and $p_*$ on $\sT$-equivariant K-groups such
  that $q^*$ is an isomorphism. Then there is an associative {\it Hall
    product}
  \[ \star\colon K_\sT(\fM(\alpha)) \otimes_{\bk_\sT} K_\sT(\fM(\beta)) \xrightarrow{\boxtimes} K_\sT(\fM(\alpha) \times \fM(\beta)) \xrightarrow{q^*} K_\sT(\fM(\alpha,\beta)) \xrightarrow{p_*} K_\sT(\fM(\alpha+\beta)) \]
  making $\bigoplus_\alpha K_\sT(\fM(\alpha))$ into a {\it K-theoretic
    Hall algebra} ({\it KHA}).

  This general sort of construction, and a broadly-applicable proof of
  its associativity, originates from the cohomological Hall algebras
  of \cite{Kontsevich2011}.
\end{definition}

\subsubsection{}

Following this general recipe, we now review the constructions of
three (successively more complicated) KHAs and compatibilities between
them.

\subsubsection{}

\begin{example}[Quiver KHA] \label{ex:quiver-KHA}
  Let $\fM = \bigsqcup_\alpha [M(\alpha)/\GL(\alpha)]$ be the moduli
  of quiver representations of a quiver $Q$
  (Definition~\ref{def:moduli-stacks}). Components of its Ext stack
  have the explicit presentation
  \[ \fM(\alpha,\beta) = [M(\alpha,\beta) / P(\alpha,\beta)], \]
  where $M(\alpha,\beta) \subset M(\alpha+\beta)$ is the vector
  subspace with non-negative weight with respect to the weight-$1$
  diagonal cocharacter
  $\bC^\times \to \GL(\alpha) \subset \GL(\alpha+\beta)$, and
  $P(\alpha,\beta) \subset \GL(\alpha+\beta)$ is the parabolic
  subgroup preserving $M(\alpha,\beta)$.
  \begin{itemize}
  \item The projection
    $q\colon \fM(\alpha,\beta) \to \fM(\alpha) \times \fM(\beta)$
    factors as
    \[ q\colon [M(\alpha,\beta)/P(\alpha,\beta)] \xrightarrow{\bar q} [M(\alpha) \times M(\beta)/P(\alpha,\beta)] \xrightarrow{r} [M(\alpha) \times M(\beta)/\GL(\alpha) \times \GL(\beta)] \]
    where $\bar q$ is an $\Ext^1$-bundle, so $\bar q^*$ is an
    isomorphism, and $r^*$ is an isomorphism on K-theory by
    \eqref{eq:equivariant-reduction}. The unipotent part of
    $P(\alpha,\beta)$ acts trivially on $M(\alpha) \times M(\beta)$ by
    definition.
  \item The projection $p\colon \fM(\alpha,\beta) \to
    \fM(\alpha+\beta)$ factors as
    \begin{equation} \label{eq:HA-p}
      p\colon [M(\alpha,\beta)/P(\alpha,\beta)] \xhookrightarrow{i} [M(\alpha+\beta)/P(\alpha,\beta)] \xrightarrow{\pi} [M(\alpha+\beta)/\GL(\alpha+\beta)]
    \end{equation}
    where $i$ is a closed immersion and $\pi$ is a proper projection.
    The latter is modeled on $[\pt/P] \to [\pt/\GL]$ which is nothing
    more than the projection $\GL/P \to \pt$ from a partial flag
    variety.
  \end{itemize}
  Hence $K_\sT(\fM) \coloneqq \bigoplus_\alpha K_\sT(\fM(\alpha))$
  becomes a KHA. Using that $K_\sT(\fM(\alpha)) \cong \bk_{\sT \times
    \GL(\alpha)}$ is just a Laurent polynomial ring, the Hall product
  $\star$ here has an explicit formula in the form of a {\it shuffle
    product}, see \S\ref{sec:quiver-KHA-shuffle-formula}.
\end{example}

\subsubsection{}

\begin{lemma} \label{lem:bilinear-bundle-as-relative-tangent}
  Let $\bT_{p_{\alpha,\beta}}$ denote the relative tangent complex of
  $p_{\alpha,\beta}$. Then
  \[ \bT_{p_{\alpha,\beta}} = q^*\cE_{\alpha,\beta} \in D^b\cat{Coh}_\sT(\fM(\alpha,\beta)) \]
\end{lemma}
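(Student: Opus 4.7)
The plan is to use the factorization $p = \pi \circ i$ from \eqref{eq:HA-p}, where $i$ is a closed immersion of a linear subvariety and $\pi$ is the tautological $\GL/P$-fibration. The standard distinguished triangle of a composition gives, in K-theory, $\cT_p = \cT_i + i^*\cT_\pi$, so it suffices to identify each summand and reassemble them into the two-term complex $\cE_{\alpha,\beta}$.

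For the closed immersion $i$, $\cT_i = -\cN_{X(\alpha,\beta)/X(\alpha+\beta)}$. Since $X(\alpha,\beta)$ is cut out inside $X(\alpha+\beta)$ as the non-negative-weight subspace for the diagonal cocharacter, the normal bundle is the complementary negative-weight direction; a block-by-block inspection identifies it edge-by-edge as $\bigoplus_{i \to j} \cV_{\alpha,i}^\vee \boxtimes \cV_{\beta,j}$, which is the ``edge'' piece of $\cE_{\alpha,\beta}$. For the fibration $\pi$, the relative tangent is the homogeneous bundle associated to $\lie{gl}(\alpha+\beta)/\lie{p}(\alpha,\beta)$ regarded as a $P$-representation. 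Unwinding the explicit description of the parabolic, this decomposes vertex-by-vertex as $\bigoplus_i \cV_{\alpha,i}^\vee \boxtimes \cV_{\beta,i}$, the ``vertex'' piece of $\cE_{\alpha,\beta}$. Adding the two contributions then recovers $\cE_{\alpha,\beta}$ as a K-class on $\fX(\alpha,\beta)$.

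To upgrade this K-class equality to the claimed equality of 2-term complexes, one finally checks that the connecting morphism in the distinguished triangle $\cT_i \to \cT_p \to i^* \cT_\pi$ coincides with the differential $\bigoplus_i \cV_{\alpha,i}^\vee \boxtimes \cV_{\beta,i} \to \bigoplus_{i\to j} \cV_{\alpha,i}^\vee \boxtimes \cV_{\beta,j}$ defining $\cE_{\alpha,\beta}$: both are induced by the universal quiver maps acting on the tautological bundles, so this comparison is essentially automatic. The real bookkeeping obstacle throughout is simply tracking sign conventions --- the K-class of a two-term complex, the direction of the cocharacter, normal versus conormal --- all of which are pinned down unambiguously by the normalization $\cT_{\fM(\alpha)} = \Delta^* \cE_{\alpha,\alpha}$ of \S\ref{sec:quiver-coVA}.
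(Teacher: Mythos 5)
Your proposal is correct and follows essentially the same route as the paper's own (much terser) proof: both use the factorization $p = \pi \circ i$ of \eqref{eq:HA-p}, identify the two graded pieces of $\cT_{p_{\alpha,\beta}}$ as the homogeneous bundle $\lie{gl}(\alpha+\beta)/\lie{p}(\alpha,\beta)$ for $\pi$ and the normal bundle of the linear embedding $i$, and observe that the connecting differential is the one induced by the universal quiver maps, so that the result is the two-term complex \eqref{eq:bilinear-bundle-moduli-stack}. Your write-up is if anything more explicit than the paper's two-sentence sketch, and your closing remark that the sign and $\alpha\leftrightarrow\beta$ conventions are pinned down by the normalization $\cT_{\fM(\alpha)} = \Delta^*\cE_{\alpha,\alpha}$ is exactly the right way to handle the bookkeeping that the paper itself leaves implicit.
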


This provides an alternative geometric meaning to our choice of
bilinear element $\cE_{\alpha,\beta}$
(Definition~\ref{eq:bilinear-bundle-moduli-stack}), and is crucial to
the compatibility (Theorems~\ref{thm:compatibility-general} and
\ref{thm:compatibility-preprojective}) of the Hall product with the
vertex coproduct.

\begin{proof}
  We only need this lemma in K-theory, so we only provide the proof in
  K-theory. The general proof follows the same idea but with more
  bookkeeping.

  Recall that a quotient stack $[X/G]$ has tangent complex
  $\bT_{[X/G]} = [\lie{g} \otimes \cO_X \to \cT_X]$, where $\cT_X$ is
  the tangent sheaf of $X$ (sitting in degree zero) and $\lie{g}$ is
  the Lie algebra of $G$. By the definition of relative tangent
  complexes, in K-theory we have
  \[ \bT_{p_{\alpha,\beta}} = \left(\cT_{M(\alpha,\beta)} - \lie{p}(\alpha,\beta) \otimes \cO_{M(\alpha,\beta)}\right) - \left(p^*\cT_{M(\alpha+\beta)} - \lie{gl}(\alpha+\beta) \otimes \cO_{M(\alpha,\beta)}\right) \]
  where $\lie{p}(\alpha+\beta)$ and $\lie{gl}(\alpha,\beta)$ are the
  Lie algebras of $P(\alpha+\beta)$ and $\GL(\alpha,\beta)$
  respectively. Also,
  \begin{align*}
    \iota^*\cT_{M(\alpha+\beta)} - \cT_{M(\alpha,\beta)} &= \sum_{i \to j} \cV_{\alpha,i}^\vee \boxtimes \cV_{\beta,j}, \\
    (\lie{gl}(\alpha+\beta) - \lie{p}(\alpha,\beta)) \otimes \cO &= \sum_i \cV_{\alpha,i}^\vee \boxtimes \cV_{\beta,i}.
  \end{align*}
  These are the parts of $p^*\bT_{\fM(\alpha+\beta)}$ with negative
  weight with respect to the weight-$1$ diagonal cocharacter
  $\bC^\times \to \GL(\alpha) \subset \GL(\alpha+\beta)$. Comparing
  with \eqref{eq:bilinear-bundle-moduli-stack}, we are done.
\end{proof}

\subsubsection{}

\begin{example}[Preprojective KHA, \cite{Varagnolo2022}]
  Let $T^*\fM = [Z/\GL]$ be the preprojective stack of a quiver $Q$
  and let $T^*\fM(\alpha,\beta) = [Z(\alpha,\beta)/P(\alpha,\beta)]$
  be its corresponding Ext stack. Explicitly, it fits into the
  commutative diagram
  \[ \begin{tikzcd}
      {[Z(\alpha) \times Z(\beta)/P(\alpha,\beta)]} \ar[hookrightarrow]{d} & T^*\fM(\alpha,\beta) \ar{l}[swap]{\tilde q_Z} \ar{r}{p_Z} \ar[hookrightarrow]{d} & T^*\fM(\alpha+\beta) \ar[hookrightarrow]{d} \\
      \left[\dfrac{\lie{p}^\perp \times T^*M(\alpha) \times T^*M(\beta)}{P(\alpha,\beta)}\right] & \fM^{\doub}(\alpha,\beta) \ar{l}[swap]{\tilde q} \ar{r}{p} & \fM^{\doub}(\alpha+\beta)
    \end{tikzcd} \]
  where $\lie{p} \subset \lie{gl}(\alpha+\beta)$ is the Lie algebra of
  $P(\alpha,\beta)$, and
  $\tilde q(A \subset B) \coloneqq (\mu_{\alpha+\beta}(A,B/A), A, B/A)$.
  Both squares are Cartesian; this would be false without the
  $\lie{p}^\perp$ factor in the bottom left.

  We know $p$ is proper from Example~\ref{ex:quiver-KHA}, and
  $\tilde q$ is lci since both its source and target are smooth. The
  vertical inclusions are badly-behaved in general, see
  Remark~\ref{rem:flatness-of-moment-map}, so while $p_Z$ is proper by
  base change, $\tilde q_Z$ is not of finite Tor amplitude and
  $(\tilde q_Z)^*$ must be defined as a {\it virtual} pullback
  \cite{Qu2018}. Along with the obvious projection from the bottom
  left to $\fM^{\doub}(\alpha) \times \fM^{\doub}(\beta)$, this makes
  $K_\sT(T^*\fM) \coloneqq \bigoplus_\alpha K_\sT(T^*\fM(\alpha))$
  into the {\it preprojective KHA} of $Q$.

  This is a K-theoretic version of the preprojective CoHA
  \cite{Yang2018}, and is conjecturally isomorphic \cite[Conjecture
  1.2]{Puadurariu2023} to the positive part of certain quantum loop
  algebras $U_q^+(L\lie{g}_Q)$. In \cite[Theorem 2.3.2]{Varagnolo2022}
  this is checked for $Q$ of finite or affine type excluding
  $A_1^{(1)}$.
\end{example}

\subsubsection{}

\begin{example}[Critical KHA, {\cite[\S 3]{Puadurariu2023}}] \label{ex:critical-KHA}
  Let $(Q, W)$ be a quiver with potential such that $\tr
  W_\alpha\colon \fM(\alpha) \to \bC$ is a regular function. The usual
  projections \eqref{eq:ext-stack-projections} from the Ext stack
  $\fM(\alpha,\beta)$ induce maps
  \begin{align*}
    K^{\crit}_\sT(\fM(\alpha) \times \fM(\beta), \tr (W_\alpha \boxplus W_\beta))
    &\xrightarrow{q^*} K^{\crit}_\sT(\fM(\alpha,\beta), \tr (p^*W_{\alpha+\beta})) \\
    &\xrightarrow{p_*} K^{\crit}_\sT(\fM(\alpha+\beta), \tr W_{\alpha+\beta})
  \end{align*}
  of critical K-groups, well-defined because one can easily check
  \[ \tr (p^*W_{\alpha+\beta}) = \tr q^*(W_\alpha \boxplus W_\beta). \]
  Pre-composed with $\boxtimes$, they make
  $K^{\crit}_\sT(\fM, \tr W) \coloneqq \bigoplus_\alpha K^{\crit}_\sT(\fM(\alpha), \tr W_\alpha)$
  into the {\it critical KHA} of $(Q, W)$.
\end{example}

\subsubsection{}

Consider the critical KHA for the tripled quiver $Q^{\trip}$ with
potential $\phi_\alpha$
(Lemma~\ref{lem:Mtrip-dimensional-reduction}), for which
\begin{equation} \label{eq:preprojective-to-critical-K-groups}
  K_\sT(T^*\fM(\alpha)) \cong K^{\crit}_\sT(\fM^{\trip}(\alpha), \phi_\alpha)
\end{equation}
as $\bk_{\sT \times \GL(\alpha)}$-modules by K-theoretic dimensional
reduction. However, the natural KHA structures on the two sides are
{\it not} isomorphic and a certain twist is required.

\begin{proposition}[{\cite[\S 3.2.2]{Puadurariu2023}}] \label{prop:twisted-preprojective-KHA}
  Let $\cE(\alpha)$ (resp. $\cE(\alpha,\beta)$) be the obvious
  projection $\fM^{\trip}(\alpha) \to \fM^{\doub}(\alpha)$ (resp.
  $\fM^{\trip}(\alpha,\beta) \to \fM^{\doub}(\alpha,\beta)$) viewed as
  a vector bundle. Set
  \[ \omega_{\alpha,\beta} \coloneqq \det\left(\cE(\alpha,\beta)/\cE(\alpha) \times \cE(\beta)\right). \]
  Use it to define the twist
  $\star_\omega \coloneqq p_*(\omega \otimes q^*(-))$ of the original
  preprojective Hall product $\star = p_*q^*$. Then
  \eqref{eq:preprojective-to-critical-K-groups} induces an isomorphism
  of KHAs
  \[ \left(K_\sT(T^*\fM), \star_\omega\right) \cong \left(K^{\crit}_\sT(\fM^{\trip}, \phi), \star\right). \]
\end{proposition}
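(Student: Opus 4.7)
The plan is to identify both Hall products with a common construction after applying K-theoretic dimensional reduction, and to pinpoint the determinant twist as the obstruction to naturality of the pullback. The starting observation is that the two bundles on the Ext stack fit into a short exact sequence
\[
0 \to \cE(\alpha) \boxplus \cE(\beta) \to \cE(\alpha,\beta) \to \cE(\alpha,\beta)/(\cE(\alpha) \boxplus \cE(\beta)) \to 0
\]
of honest vector bundles on $\fM^{\doub}(\alpha,\beta)$, in which $\cE(\alpha,\beta)$ corresponds fiberwise to the parabolic Lie subalgebra $\lie{p}(\alpha,\beta) \subset \lie{gl}(\alpha+\beta)$ and $\cE(\alpha) \boxplus \cE(\beta)$ to its Levi part $\lie{gl}(\alpha) \oplus \lie{gl}(\beta)$. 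Taking the top exterior power of the cokernel gives $\omega_{\alpha,\beta}$ by definition.

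Next I would verify the two compatibilities separately. For pushforward along $p$: since $\fM^{\trip}(\alpha,\beta)$ and $\fM^{\trip}(\alpha+\beta)$ are total spaces of vector bundles over the corresponding $\fM^{\doub}$-stacks, the map $p$ is linear in the $x^\circ$-fibers and the potential $\phi$ pulls back correctly, so $p_*$ commutes with K-theoretic dimensional reduction with no correction. This uses the projection formula together with the $\bk_\sG$-linearity of dimensional reduction observed in $\S$\ref{sec:dimensional-reduction-observations}.

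For pullback along $q$: the obstruction comes from the displayed short exact sequence. The Koszul-dual presentation \eqref{eq:linear-koszul-duality} of the critical K-theory on $\fM^{\trip}(\alpha,\beta)$ uses the Koszul complex of $\cE(\alpha,\beta)$, whereas a class pulled along $q$ from the critical K-theory on $\fM^{\trip}(\alpha) \times \fM^{\trip}(\beta)$ naturally carries only the Koszul complex of $\cE(\alpha) \boxplus \cE(\beta)$. Upon translating through Koszul duality, the ``missing'' Koszul factor on the quotient bundle contributes its top exterior power, namely $\omega_{\alpha,\beta}$; equivalently, from the preprojective perspective this is the determinant of the excess normal bundle that makes the preprojective Ext square non-Cartesian. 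Hence $q^*$ intertwines dimensional reduction only after twisting by $\omega$.

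The main obstacle will be the careful bookkeeping of derived structure and virtual pullbacks, since $\tilde q_Z$ is not of finite Tor-amplitude and the Ext squares for the preprojective stack fail to be Cartesian in general. I would sidestep this by working throughout with the derived moduli stacks as in Remark~\ref{rem:flatness-of-moment-map}, reducing every computation to the smooth $\fM^{\doub}$-side where linear Koszul duality applies cleanly and the sequence above is an honest short exact sequence of vector bundles. Once the two compatibilities are established, the KHA isomorphism (and in particular associativity of $\star_\omega$) follows formally from the corresponding properties of the critical KHA in Example~\ref{ex:critical-KHA}.
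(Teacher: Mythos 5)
The paper does not actually prove this proposition: it is imported verbatim from P{\u a}durariu, so there is no internal argument to compare your sketch against. Your overall architecture is nonetheless the right one and matches the cited proof in spirit: identify $\omega_{\alpha,\beta}$ fiberwise as $\det\bigl(\lie{p}(\alpha,\beta)/(\lie{gl}(\alpha)\oplus\lie{gl}(\beta))\bigr)$, i.e.\ the determinant of the nilradical $\lie{n}$ of the parabolic, and then chase the two Hall products through the Koszul-duality/dimensional-reduction equivalences over the $p,q$ diagram, working with derived zero loci to tame the virtual pullback $\tilde q_Z^*$. The reduction of associativity of $\star_\omega$ to that of the critical KHA is also fine.

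The gaps are in the one step that carries all the content of the proposition, namely pinning down the exact twist. First, the mechanism you offer for the determinant is wrong as stated: a ``missing Koszul factor'' on the quotient bundle $\lie{n}$ would contribute the full alternating sum $\wedge^\bullet_{-1}(\lie{n}^\vee)$ in K-theory, not its top exterior power --- these are completely different classes. The determinant in fact enters through the dualization $\cF \mapsto \cF^\vee$ in the linear Koszul duality \eqref{eq:linear-koszul-duality}, via $\wedge^i\cE^\vee \cong \wedge^{r-i}\cE \otimes \det\cE^\vee$, when the equivalences attached to bundles of different ranks ($\lie{gl}(\alpha)\oplus\lie{gl}(\beta)$, $\lie{p}$, and $\lie{gl}(\alpha+\beta)$) are compared. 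Second, your claim that $p_*$ intertwines the equivalences ``with no correction'' while the entire twist sits in $q^*$ is asserted rather than argued, and is plausibly backwards: on fibers $q^{\trip}$ is the Levi surjection $\lie{p}\twoheadrightarrow\lie{l}$, an affine bundle for which pullback is a clean Thom isomorphism, whereas $p^{\trip}$ involves the closed embedding of sub-bundles $\lie{p}\hookrightarrow\lie{gl}(\alpha+\beta)$ covering $X^{\doub}(\alpha,\beta)\hookrightarrow X^{\doub}(\alpha+\beta)$, which is exactly the kind of step that produces a relative determinant (of $\lie{gl}/\lie{p}\cong\lie{n}^-$, carrying its $\hbar^{-1}$-weight) under Koszul duality. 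Until this bookkeeping is actually carried out, nothing in your argument distinguishes $\omega$ from $\omega^{-1}$, from $\det(\hbar^{-1}\lie{n}^-)$, or from a ratio of such factors --- and since the proposition's entire point is the precise form of the twist, this is a genuine gap rather than a detail.
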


\subsection{Product-coproduct compatibility}
\label{sec:compatibility}

\subsubsection{}

We first prove the compatibility theorem for the vertex coalgebra and
KHA structures on $K_\sT(\fM_Q)$
(Theorem~\ref{thm:compatibility-general}). Then we explain how to
modify the proof for the more complicated case of $K_\sT(T^*\fM_Q)$
(Theorem~\ref{thm:compatibility-preprojective}).

\subsubsection{}

\begin{theorem} \label{thm:compatibility-general}
  Let $V \coloneqq K_\sT(\fM_Q)$. On $V$, the vertex coalgebra
  structure $(\vac, D, \Y, C)$
  (Theorem~\ref{thm:coVA-construction-general}) and Hall product
  $\star$ (Example~\ref{ex:quiver-KHA}) form a commutative square
  \[ \begin{tikzcd}[column sep=huge]
      V \otimes V \ar{rr}{\check S^{(23)}(z) \circ (\Y(z) \boxtimes \Y(z))} \ar{d}[swap]{\star} & & (V \otimes V \otimes V \otimes V)\lseries*{(1-z)^{-1}} \ar{d}{\star \boxtimes \star} \\
      V \ar{rr}{\Y(z)} && (V \otimes V)\lseries*{(1-z)^{-1}}
    \end{tikzcd} \]
  where a superscript $(-)^{(ij)}$ means to act on the $i$-th and
  $j$-th factors, and
  \[ \check S_{\alpha,\beta}(z) \coloneqq \sigma_{12} \circ S_{\alpha,\beta}(z), \qquad S_{\alpha,\beta}(z) \coloneqq (-z)^{\rank \cE_{\alpha,\beta}} \det(\cE_{\alpha,\beta}) \Theta_{\beta,\alpha}(z) (\Theta_{\alpha,\beta}(z)^\vee)^{-1} \]
  is the braiding operator associated to the vertex coalgebra
  (Remark~\ref{rem:coVA-construction-general-braiding}).
\end{theorem}

\subsubsection{}

\begin{remark}
  This is a K-theoretic analogue of \cite{Latyntsev2021}, where a
  similar compatibility is shown for ordinary (nonequivariant)
  cohomology with its additive vertex coproduct and CoHA product. It
  is very easy to check that the unit for the Hall algebra, i.e. the
  generator of $K_\sT(\fM(0))$, is also compatible with the vertex
  coalgebra structure. Hence we call $V$ a {\it braided vertex
    bialgebra} following \cite{Li2007}, though we have an algebra
  structure on a vertex coalgebra rather than a coalgebra structure on
  a vertex algebra. These are distinct notions because not every
  (vertex) algebra induces a (vertex) coalgebra on the dual.
\end{remark}

\subsubsection{}

\begin{proof}
  Recall that the module $V$ is graded. Setting $V(\alpha) \coloneqq
  K_\sT(\fM(\alpha))$, it suffices to prove the commutativity of the
  graded piece
  \begin{equation} \label{eq:product-coproduct-compatibility-graded}
    \begin{tikzcd}[column sep=huge]
      V(\alpha) \otimes V(\beta) \ar{rr}{{\bigoplus \check S_{\alpha_2,\beta_1}^{(23)}(z) \circ \!\!\begin{array}{l} \Y_{\alpha_1,\alpha_2}(z) \boxtimes \\[-6pt] \Y_{\beta_1,\beta_2}(z)\end{array}}} \ar{d}[swap]{\star} && \bigoplus\left(\!\! \begin{array}{l} V(\alpha_1) \otimes V(\beta_1) \otimes \\ V(\alpha_2) \otimes V(\beta_2)\end{array}\!\!\right)\!\lseries*{(1-z)^{-1}} \ar{d}{\star \boxtimes \star} \\
      V(\alpha+\beta) \ar{rr}{\Y_{\gamma_1,\gamma_2}(z)} && (V(\gamma_1) \otimes V(\gamma_2))\lseries*{(1-z)^{-1}}
    \end{tikzcd}
  \end{equation}
  for given $(\alpha,\beta,\gamma_1,\gamma_2)$, where the sum
  $\bigoplus$ is over dimension vectors
  $(\alpha_1,\alpha_2,\beta_1,\beta_2)$ satisfying
  \begin{equation} \label{eq:product-coproduct-class-splitting}
    \begin{array}{cc} \alpha = \alpha_1+\alpha_2, & \gamma_1 = \alpha_1+\beta_1, \\ \beta = \beta_1+\beta_2, & \gamma_2 = \alpha_2+\beta_2. \end{array}
  \end{equation}

\subsubsection{}

  We follow the proof strategy of \cite[\S 10]{Latyntsev2021}. Consider
  the diagram
  \begin{equation} \label{eq:product-direct-sum-diagram}
    \begin{tikzcd}
      \bigsqcup \begin{array}{l} \fM(\alpha_1) \times \fM(\alpha_2) \times \\ \fM(\beta_1) \times \fM(\beta_2)\end{array} \ar{rr}{\Phi \times \Phi} && \fM(\alpha) \times \fM(\beta) \\
      \bigsqcup\; \fM(\alpha_1,\beta_1) \times \fM(\alpha_2,\beta_2) \ar{u}{\sigma_{23} \circ (q \times q)} \ar{dr}[swap]{p \times p} \ar[hookrightarrow]{r}{\iota} & \fM(\alpha,\beta)^{\text{split}}_{\gamma_1,\gamma_2} \ar{r}{\tilde\Phi} \ar{d}{\tilde p} & \fM(\alpha,\beta) \ar{d}{p} \ar{u}[swap]{q} \\
      & \fM(\gamma_1) \times \fM(\gamma_2) \ar{r}{\Phi} & \fM(\alpha+\beta).
    \end{tikzcd}
  \end{equation}
  where the disjoint unions $\bigsqcup$ range over all dimension
  vectors $(\alpha_1, \alpha_2, \beta_1, \beta_2)$ satisfying
  \eqref{eq:product-coproduct-class-splitting}, $\sigma_{23}$ swaps
  the second and third factors, and
  $\fM(\alpha,\beta)_{\gamma_1,\gamma_2}^{\text{split}}$ (and $\tilde
  p$ and $\tilde\Phi$) is defined by the bottom right square being a
  Cartesian square of {\it dg-stacks}. Explicitly,
  $\fM(\alpha,\beta)_{\gamma_1,\gamma_2}^{\text{split}}$ is a dg-stack
  which parameterizes tuples
  \begin{equation} \label{eq:extension-split-stack-objects}
    \left([0 \to A \to B \to C \to 0], B_1, B_2, g\right)
  \end{equation}
  where $[0 \to A \to B \to C \to 0] \in \fM(\alpha,\beta)$ is an
  extension, $B_i \in \fM(\gamma_i)$, and $g\colon B
  \xrightarrow{\sim} B_1 \oplus B_2$ is an isomorphism of objects in
  $\fM(\alpha+\beta)$. The embedding $\iota$ is of the locus where the
  extension is actually the direct sum of two extensions $0 \to A_i
  \to B_i \to C_i \to 0$ with $A_i \in \fM(\alpha_i)$ and $C_i \in
  \fM(\beta_i)$, and $g\colon B_1 \oplus B_2 \xrightarrow{\sim} B_1
  \oplus B_2$ is the identity (modulo automorphisms of the $B_i$).

\subsubsection{}

  The lower left triangle in \eqref{eq:product-direct-sum-diagram}
  consists of global quotients of $\sT$-equivariant dg-schemes by $G
  \coloneqq \GL(\gamma_1) \times \GL(\gamma_2)$, and $\sT$-equivariant
  morphisms between them. Since the $G$- and $\sT$-actions commute,
  and all potentials are $G$-invariant, for our purposes it may
  equivalently be considered as a triangle
  \begin{equation} \label{eq:extension-split-stack-fixed-locus}
    \begin{tikzcd}
      \bigsqcup \begin{array}{l} \left(M(\alpha_1,\beta_1) \times_{P(\alpha_1,\beta_1)} \GL(\gamma_1)\right) \times\\ \left(M(\alpha_2,\beta_2) \times_{P(\alpha_2,\beta_2)} \GL(\gamma_2)\right)\end{array} \ar{dr}[swap]{p \times p} \ar[hookrightarrow]{r}{\iota} & M(\alpha,\beta)_{\gamma_1,\gamma_2}^{\text{split}} \ar{d}{\tilde p} \\
                {} & M(\gamma_1) \times M(\gamma_2)
    \end{tikzcd}
  \end{equation}
  of $(\sT \times G)$-equivariant dg-schemes and $(\sT \times
  G)$-equivariant morphisms between them. Let $\bC^\times$ act on
  $M(\alpha,\beta)_{\gamma_1,\gamma_2}^{\text{split}}$ by scaling the
  $\gamma_1$ component, meaning that $\zeta \in \bC^\times$ acts on
  the tuple \eqref{eq:extension-split-stack-objects} by
  \[ \zeta \cdot \left([0 \to A \to B \to C \to 0], B_1, B_2, g\right) \coloneqq \left([0 \to A \to B \to C \to 0], B_1, B_2, (\zeta \oplus 1)g\right). \]
  This $\bC^\times$-action clearly commutes with the $(\sT \times
  G)$-action, and it is straightforward to check that $\iota$ is the
  inclusion of the $\bC^\times$-fixed locus.

\subsubsection{}

  We will verify the desired commutativity of
  \eqref{eq:product-coproduct-compatibility-graded} by direct
  computation using the diagram \eqref{eq:product-direct-sum-diagram}.
  Explicitly, the desired equality is
  \begin{equation} \label{eq:product-coproduct-compatibility-graded-explicit}
    \begin{aligned}
      &\sum (p_{\alpha_1,\beta_1} \times p_{\alpha_2,\beta_2})_* (q_{\alpha_1,\beta_1} \times q_{\alpha_2,\beta_2})^*\sigma_{23}^* \\
      &\qquad\left[S_{\alpha_2,\beta_1}(z) \otimes \wedge_{-z}^\bullet\left(\cE_{\alpha_1,\alpha_2}^\vee \boxplus \cE_{\beta_1,\beta_2}^\vee\right) \otimes (z^{\deg_1}\Phi_{\alpha_1,\alpha_2}^* \times z^{\deg_1}\Phi_{\beta_1,\beta_2}^*)E\right] \\
      &\stackrel{?}{=} \wedge_{-z}^\bullet\left(\cE_{\gamma_1,\gamma_2}^\vee\right) \otimes z^{\deg_1} \Phi_{\gamma_1,\gamma_2}^* (p_{\alpha,\beta})_* q_{\alpha,\beta}^*E,
    \end{aligned}
  \end{equation}
  where the sum ranges over all dimension vectors satisfying
  \eqref{eq:product-coproduct-class-splitting}, and $\cE$ is the
  bilinear element used to define $\Theta(z) =
  \wedge_{-z}^\bullet(\cE^\bullet)$. Note that $\cE$ is pulled back
  from a point and therefore tensor product with it commutes with all
  pushforwards and pullbacks.

\subsubsection{}

  We begin with the left hand side of
  \eqref{eq:product-coproduct-compatibility-graded-explicit}. We claim
  that
  \begin{align}
    &\sum (p_{\alpha_1,\beta_1} \times p_{\alpha_2,\beta_2})_* (q_{\alpha_1,\beta_1} \times q_{\alpha_2,\beta_2})^* \sigma_{23}^* \nonumber \\
    &\qquad\left[S_{\alpha_2,\beta_1}(z) \otimes \wedge_{-z}^\bullet\left(\cE_{\alpha_1,\alpha_2}^\vee \boxplus \cE_{\beta_1,\beta_2}^\vee\right) \otimes (z^{\deg_1}\Phi_{\alpha_1,\alpha_2}^* \times z^{\deg_1}\Phi_{\beta_1,\beta_2}^*)E\right] \nonumber \\
    &= \sum (p_{\alpha_1,\beta_1} \times p_{\alpha_2,\beta_2})_* S_{\alpha_2,\beta_1}(z) \otimes \wedge_{-z}^\bullet\left(\cE_{\alpha_1,\alpha_2}^\vee \boxplus \cE_{\beta_1,\beta_2}^\vee\right) \otimes z^{\deg_1} \iota_{\alpha_1,\beta_1,\alpha_2,\beta_2}^!\tilde\Phi_{\gamma_1,\gamma_2}^* q_{\alpha,\beta}^*E \nonumber \\
    &= \wedge_{-z}^\bullet\left(\cE_{\gamma_1,\gamma_2}^\vee\right) \otimes \sum (p_{\alpha_1,\beta_1} \times p_{\alpha_2,\beta_2})_* \frac{S_{\alpha_2,\beta_1}(z)}{\wedge_{-z}^\bullet\left(\cE_{\alpha_1,\beta_2}^\vee \boxplus \cE_{\beta_1,\alpha_2}^\vee\right)} z^{\deg_1} \iota_{\alpha_1,\beta_1,\alpha_2,\beta_2}^!\tilde\Phi^*q_{\alpha,\beta}^*E. \label{eq:product-coproduct-compatiblity-lhs}
  \end{align}
  where $\iota_{\alpha_1,\beta_1,\alpha_2,\beta_2}$ denotes the
  restriction of $\iota$ to the component $\fM(\alpha_1,\beta_1) \times
  \fM(\alpha_2,\beta_2)$. Namely, the first equality follows from the
  commutativity of the upper rectangle in
  \eqref{eq:product-direct-sum-diagram}, and the second equality follows
  from the bilinearity
  \[ (p_{\alpha_1,\beta_1} \times p_{\alpha_2,\beta_2})^*\cE_{\gamma_1,\gamma_2} = \cE_{\alpha_1,\alpha_2} \oplus \cE_{\alpha_1,\beta_2} \oplus \cE_{\beta_1,\alpha_2} \oplus \cE_{\beta_1,\beta_2} \]
  which is clear from the definition
  \eqref{eq:bilinear-bundle-moduli-stack} of $\cE$ (cf. the
  bilinearity \eqref{eq:bundle-bilinearity}). We omitted some
  pullbacks $(q \times q)^*\sigma_{23}^*$ on $S_{\alpha_2,\beta_1}(z)$
  and the various $\cE$ because $q^*$ is an isomorphism and the
  subscripts already make it clear which spaces each element is pulled
  back from. Note that the Gysin pullback $i^!$ is required because
  the usual pullback $i^*$ may not exist.

\subsubsection{}
\label{sec:compatibility-base-change-step}  

  Now we consider the right hand side of
  \eqref{eq:product-coproduct-compatibility-graded-explicit}. Since
  the lower right square in \eqref{eq:product-direct-sum-diagram} is
  Cartesian, by base change
  \[ \wedge_{-z}^\bullet\left(\cE_{\gamma_1,\gamma_2}^\vee\right) \otimes z^{\deg_1} \Phi_{\gamma_1,\gamma_2}^* (p_{\alpha,\beta})_* q_{\alpha,\beta}^*E = \wedge_{-z}^\bullet\left(\cE_{\gamma_1,\gamma_2}^\vee\right) \otimes z^{\deg_1} \tilde p_* \tilde\Phi^* q_{\alpha,\beta}^*E. \]
  Comparing with \eqref{eq:product-coproduct-compatiblity-lhs}, it
  therefore suffices to prove that
  \begin{equation} \label{eq:product-coproduct-compatiblity-localization}
    z^{\deg_1} \tilde p_*F \stackrel{?}{=} \sum (p_{\alpha_1,\beta_1} \times p_{\alpha_2,\beta_2})_* \frac{S_{\alpha_2,\beta_1}(z)}{\wedge_{-z}^\bullet\left(\cE_{\alpha_1,\beta_2}^\vee \boxplus \cE_{\beta_1,\alpha_2}^\vee\right)} z^{\deg_1} \iota_{\alpha_1,\beta_1,\alpha_2,\beta_2}^! F
  \end{equation}
  for any $F \in
  K_\sT(\fM(\alpha,\beta)_{\gamma_1,\gamma_2}^{\text{split}})$. This
  is an equality in $K_\sT(\fM(\gamma_1) \times
  \fM(\gamma_2))\lseries*{(1-z)^{-1}}$. We claim that it is a form of
  equivariant localization, as follows.

\subsubsection{}

  \begin{lemma} \label{lem:extension-split-stack-normal-bundle}
    The K-theory class of the relative tangent complex of $\iota$ is
    given by
    \[ \bT_{\iota_{\alpha_1,\beta_1,\alpha_2,\beta_2}} = -\cE_{\alpha_1,\beta_2} - \cE_{\alpha_2,\beta_1}. \]
  \end{lemma}

  \begin{proof}
    By the exact triangle for relative tangent complexes,
    \[ \bT_{\iota_{\alpha_1,\beta_1,\alpha_2,\beta_2}} = \bT_{p_{\alpha_1,\beta_1} \times p_{\alpha_2,\beta_2}} - \iota^* \bT_{\tilde p_{\alpha,\beta}} = \bT_{p_{\alpha_1,\beta_1} \times p_{\alpha_2,\beta_2}} - \iota^*\tilde\Phi^* \bT_{p_{\alpha,\beta}} \]
    where the second equality is base change for tangent complexes.
    Applying Lemma~\ref{lem:bilinear-bundle-as-relative-tangent}, this
    becomes
    \[ (\cE_{\alpha_1,\beta_1} + \cE_{\alpha_2,\beta_2}) - \cE_{\alpha_1+\alpha_2,\beta_1+\beta_2} = -\cE_{\alpha_1,\beta_2} - \cE_{\alpha_2,\beta_1} \]
    on $\fM(\alpha_1,\beta_1) \times \fM(\alpha_2,\beta_2)$, using the
    bilinearity \eqref{eq:bundle-bilinearity} of $\cE_{\alpha,\beta}$.
  \end{proof}

\subsubsection{}

  Let $\iota^{\bC^\times}$, $(p \times p)^{\bC^\times}$, and $\tilde
  p^{\bC^\times}$ denote the $(\bC^\times \times \sT \times
  G)$-equivariant versions of the maps in
  \eqref{eq:extension-split-stack-fixed-locus} and choose any
  $(\bC^\times \times \sT \times G)$-equivariant lift $F^{\bC^\times}$
  of $F$. Let $z$ denote the weight of the $\bC^\times$-action so
  that, for instance, $\bk_{\bC^\times \times \sT \times G} = \bk_{\sT
    \times G}[z^\pm]$. We work over the ring
  \begin{equation} \label{eq:extension-split-stack-localized-submodule}
    \bk_{\bC^\times \times \sT \times G}\left[\left(\wedge_{-1}^\bullet(z\cG)\right)^{-1} : \cG \in \cat{Coh}_{\sT \times G}(\pt)\right],
  \end{equation}
  in which $\wedge_{-1}^\bullet(z^\pm \cG)$ exists and is invertible.
  Using Lemma~\ref{lem:extension-split-stack-normal-bundle}, the
  virtual localization formula \eqref{eq:virtual-localization} with
  respect to the central subgroup $\bC^\times \subset \bC^\times
  \times \sT \times G$ says
  \begin{equation} \label{eq:extension-split-stack-localization}
    (\iota_*^{\bC^\times})^{-1}F^{\bC^\times} = \sum \left(\wedge_{-1}^\bullet(z\cE_{\alpha_1,\beta_2}^\vee \boxplus z^{-1}\cE_{\alpha_2,\beta_1}^\vee)\right)^{-1} (\iota^{\bC^\times}_{\alpha_1,\beta_1,\alpha_2,\beta_2})^! F^{\bC^\times}.
  \end{equation}
  Applying $((p \times p)^{\bC^\times})_*$ to both sides produces
  \begin{equation} \label{eq:extension-split-stack-localization-pushed}
    (\tilde p^{\bC^\times})_*F^{\bC^\times} = \sum ((p \times p)^{\bC^\times})_* \left(\wedge_{-1}^\bullet(z\cE_{\alpha_1,\beta_2}^\vee \boxplus z^{-1}\cE_{\alpha_2,\beta_1}^\vee)\right)^{-1} (\iota^{\bC^\times}_{\alpha_1,\beta_1,\alpha_2,\beta_2})^! F^{\bC^\times}
  \end{equation}
  by the commutativity of \eqref{eq:extension-split-stack-fixed-locus}.
  This is an equality in
  \[ K_{\sT \times G}(M(\gamma_1) \times M(\gamma_2))[z^\pm]\left[\left(\wedge_{-1}^\bullet(z\cG)\right)^{-1} : \cG \in \cat{Coh}_{\sT \times G}(\pt)\right]. \]

\subsubsection{}

  It remains to replace all $\bC^\times$-equivariant maps with their
  non-$\bC^\times$-equivariant versions, while still keeping track of
  $\bC^\times$-weights by applying $z^{\deg_1}$ and treating $z$ as a
  formal variable. This is valid because $\bC^\times$ acts trivially
  on $M(\gamma_1) \times M(\gamma_2)$. Hence
  \eqref{eq:extension-split-stack-localization-pushed} becomes
  \[ z^{\deg_1} \tilde p_*F = \sum (p \times p)_* \left(\wedge_{-1}^\bullet(z\cE_{\alpha_1,\beta_2}^\vee \boxplus z^{-1}\cE_{\alpha_2,\beta_1}^\vee)\right)^{-1} z^{\deg_1} \iota_{\alpha_1,\beta_1,\alpha_2,\beta_2}^!F. \]
  The localization factor may be rewritten as
  \begin{align*}
    \wedge_{-1}^\bullet(z\cE_{\alpha_1,\beta_2}^\vee \boxplus z^{-1}\cE_{\alpha_2,\beta_1}^\vee)
    &= (-z)^{-\rank \cE_{\alpha_2,\beta_1}} \det(\cE_{\alpha_2,\beta_1})^\vee \otimes \wedge_{-1}^\bullet(z\cE_{\alpha_1,\beta_2}^\vee \boxplus z\cE_{\alpha_2,\beta_1}) \\
    &= (-z)^{-\rank \cE_{\alpha_2,\beta_1}} \det(\cE_{\alpha_2,\beta_1})^\vee \wedge_{-1}^\bullet(z\cE_{\alpha_2,\beta_1})\wedge_{-1}^\bullet(z\cE_{\beta_1,\alpha_2}^\vee)^{-1} \\
    &\qquad\otimes \wedge_{-1}^\bullet(z\cE_{\alpha_1,\beta_2}^\vee \boxplus z\cE_{\beta_1,\alpha_2}^\vee).
  \end{align*}
  Finally, the expansion of Definition~\ref{sec:theta-expansion} may be
  applied to the inverses $(\wedge_{-1}^\bullet(z\cG))^{-1}$ in the
  ring \eqref{eq:extension-split-stack-localized-submodule}. The
  terms preceding $\otimes$ in the localization factor become exactly
  $S_{\alpha_2,\beta_1}(z)^{-1}$, by definition. The result is the
  desired identity
  \eqref{eq:product-coproduct-compatiblity-localization}.
\end{proof}

\subsubsection{}

\begin{theorem} \label{thm:compatibility-preprojective}
  Let $V \coloneqq K_\sT(T^*\fM_Q)_{\loc}$. On $V$, the vertex
  coalgebra structure $(\vac, D, \Y, C)$
  (Theorem~\ref{thm:coVA-construction-preprojective}) and Hall product
  $\star$ (Proposition~\ref{prop:twisted-preprojective-KHA}) form a
  commutative square
  \[ \begin{tikzcd}[column sep=huge]
      V \otimes V \ar{rr}{\check S^{(23)}(z) \circ (\Y(z) \boxtimes \Y(z))} \ar{d}[swap]{\star_\omega} & & (V \otimes V \otimes V \otimes V)\lseries*{(1-z)^{-1}} \ar{d}{\star_\omega \boxtimes \star_\omega} \\
      V \ar{rr}{\Y(z)} && (V \otimes V)\lseries*{(1-z)^{-1}}
    \end{tikzcd} \]
  where a superscript $(-)^{(ij)}$ means to act on the $i$-th and
  $j$-th factors, and
  \[ \check S_{\alpha,\beta}(z) \coloneqq \sigma_{12} \circ S_{\alpha,\beta}(z), \qquad S_{\alpha,\beta}(z) \coloneqq (-z)^{\rank \cE^{\trip}_{\alpha,\beta}} \det(\cE_{\alpha,\beta}^{\trip}) \Theta_{\beta,\alpha}^{\trip}(z) (\Theta_{\alpha,\beta}^{\trip}(z)^\vee)^{-1} \]
  is the braiding operator associated to the vertex coalgebra of
  $Q^{\trip}$ (Remark~\ref{rem:coVA-construction-general-braiding}).
\end{theorem}

\subsubsection{}

\begin{remark} \label{rem:vertex-bialgebra-general}
  In fact, one can verify that nothing in what follows depends on
  specific properties of $(\fM^{\trip}, \phi)$, which can be replaced
  by any $(\fM_Q, \tr W)$ as long as the critical K-group
  $K_\sT(\fM_Q, \tr W)$ satisfies a K\"unneth property (see
  Remark~\ref{rem:coVA-general}) so that the vertex coalgebra is
  well-defined. Under this assumption, the general result is that the
  critical KHAs of Example~\ref{ex:critical-KHA} become vertex
  bialgebras as well.
\end{remark}

\subsubsection{}

\begin{proof}[Proof of Theorem~\ref{thm:compatibility-preprojective}.]
  The proof of Theorem~\ref{thm:compatibility-general} may be adapted
  as follows.

  First, recall from
  \S\ref{sec:preprojective-vertex-coproduct-from-critical-K-theory}
  that the vertex coproduct $\Y$ on $V$ was actually defined using the
  $\bk_{\sT,\loc}$-module $K^{\crit}_\sT(\fM^{\trip}, \phi)_{\loc}$,
  which is isomorphic to $V$ by dimensional reduction. So, using
  Proposition~\ref{prop:twisted-preprojective-KHA}, we may consider $V
  = K^{\crit}_\sT(\fM^{\trip}, \phi)_{\loc}$ and the product $\star$,
  instead of $V = K_\sT(T^*\fM)_{\loc}$ and the product
  $\star_\omega$.

  Second, consider the diagram \eqref{eq:product-direct-sum-diagram}
  for $\fM^{\trip}$ instead of $\fM$. Using the potential
  $\phi_\alpha$ on $\fM^{\trip}(\alpha)$, we take the obvious choices
  of potentials on every term in the middle row of
  \eqref{eq:product-direct-sum-diagram} compatible with all the maps
  (see Example~\ref{ex:critical-KHA}). Since all stacks except the
  middle term
  $\fM^{\trip}(\alpha,\beta)_{\gamma_1,\gamma_2}^{\text{split}}$ are
  smooth, their critical K-groups with respect to these potentials are
  well-defined, and we want to prove
  \eqref{eq:product-coproduct-compatibility-graded-explicit}, as
  before.

  Finally, for a space $X$ with potential $\phi$, write $X_0 \coloneqq
  \phi^{-1}(0)$ for short. By the definition of critical K-theory, to
  prove an equality in $K_\sG^{\crit}(M, \phi) =
  K_\sG(M_0)/K_\sG^\circ(M_0)$, it suffices to prove it in the
  pre-quotient $K_\sG(M_0)$. We must therefore consider the diagram
  \[ \begin{tikzcd}[column sep=small]
      \bigsqcup \begin{array}{l} (\fM^{\trip}(\alpha_1) \times \fM^{\trip}(\alpha_2) \times \\ \fM^{\trip}(\beta_1) \times \fM^{\trip}(\beta_2))_0\end{array} \ar{rr}{\Phi \times \Phi} && (\fM^{\trip}(\alpha) \times \fM^{\trip}(\beta))_0 \\
      \bigsqcup\; (\fM^{\trip}(\alpha_1,\beta_1) \times \fM^{\trip}(\alpha_2,\beta_2))_0 \ar{u}{\sigma_{23} \circ (q \times q)} \ar{dr}[swap]{p \times p} \ar[hookrightarrow]{r}{\iota} & (\fM^{\trip}(\alpha,\beta)^{\text{split}}_{\gamma_1,\gamma_2})_0 \ar{r}{\tilde\Phi} \ar{d}{\tilde p} & \fM^{\trip}(\alpha,\beta)_0 \ar{d}{p} \ar{u}[swap]{q} \\
      & (\fM^{\trip}(\gamma_1) \times \fM^{\trip}(\gamma_2))_0 \ar{r}{\Phi} & \fM^{\trip}(\alpha+\beta)_0.
  \end{tikzcd} \]
  which is \eqref{eq:product-direct-sum-diagram} for $\fM^{\trip}$
  with all stacks replaced by the zero loci of their associated
  potentials. Using that
  \[ \begin{tikzcd}
    X_0 \ar[hookrightarrow]{d} \ar{r}{f_0} & Y_0 \ar[hookrightarrow]{d} \\
    X \ar{r}{f} & Y \ar{r}{\phi} & \bC
  \end{tikzcd} \]
  is a Cartesian square, and using various base change properties, it
  is straightforward to check that all steps in the proof of
  \eqref{eq:product-coproduct-compatibility-graded-explicit} continue
  to hold.
\end{proof}

\subsection{Comparison with ambient vertex bialgebra}
\label{sec:comparison-ambient-vertex-bialgebra}

\subsubsection{}

For a space $X$ with potential $\phi\colon X \to \bC$, write
$X_0 \coloneqq \phi^{-1}(0)$ for short.

\begin{theorem} \label{thm:PHA-to-ambient-comparison}
  Let $\fX = \fM^{\trip}$. The inclusion
  $i_0\colon \fX_0 \hookrightarrow \fX$ induces a vertex bialgebra
  morphism
  \begin{equation} \label{eq:PHA-to-ambient-map}
    i_{0*}\colon K_\sT^{\crit}(\fX, \phi)_{\loc} \to K_\sT(\fX)_{\loc}.
  \end{equation}
\end{theorem}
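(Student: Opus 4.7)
The fact that $i_{0*}$ is a Hall algebra morphism is already known (as mentioned in the introduction), so the content of the theorem is vertex coalgebra compatibility, which I plan to reduce to a tautology using the matrix factorization presentation of $i_{0*}$. Since $\fX = \fM^{\trip}$ is a smooth (dg-)stack, Theorem~\ref{thm:Dcrit-as-matrix-factorizations} represents every class in $K^{\crit}_\sT(\fX, \phi)_{\loc}$ by a matrix factorization $(\cE_1 \xrightarrow{d_1} \cE_0)$ whose cokernel fits in a two-term resolution $0 \to \cE_1 \to \cE_0 \to \fC(\cE_\bullet, d) \to 0$ of coherent sheaves on $\fX$, so
\begin{equation*}
  i_{0*}[\fC(\cE_\bullet, d)] = [\cE_0] - [\cE_1] \in K_\sT(\fX).
\end{equation*}
This is well-defined on $K_0(\cat{MF}) = K^{\crit}$ by termwise additivity across distinguished triangles (mapping cones).

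With this concrete description, compatibility of $i_{0*}$ with the covacuum, translation operator, and braiding is immediate: $i_{0*}$ is the identity on $K^{\crit}(\pt, 0) = \bk_{\sT,\loc}$; the scaling $\Psi_\alpha$ preserves $\phi$ (acting trivially on points of $\fX$) so $D(z) = z^{\deg}$ commutes with $i_{0*}$; and $\check S(z)$ is a swap composed with multiplication by a $\bk_{\sT,\loc}$-linear class, both of which $i_{0*}$ respects.

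The main step is the vertex coproduct compatibility
\begin{equation*}
  \Y^{\mathrm{ord}}_{\alpha,\beta} \circ i_{0*} = (i_{0*} \boxtimes i_{0*}) \circ \Y^{\crit}_{\alpha,\beta}.
\end{equation*}
By Theorem~\ref{thm:coVA-construction-preprojective}, both coproducts are given by the same formula $\Theta^\bullet_{\alpha,\beta}(z) \cdot z^{\deg_1} \Phi^*$ with identical $\Theta^\bullet$ taken from $\fM^{\trip}$. Multiplication by $\Theta^\bullet(z)$ commutes with $i_{0*} \boxtimes i_{0*}$ by the projection formula, and $z^{\deg_1}$ commutes with $i_{0*}$ as above, so the compatibility reduces to showing $\Phi^*_{\mathrm{ord}} \circ i_{0*} = (i_{0*}\boxtimes i_{0*}) \circ \Phi^*_{\crit}$. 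In the matrix factorization picture of \S\ref{sec:Dcrit-functoriality}, $\Phi^*_{\crit}(\cE_\bullet, d) = (\Phi^*\cE_\bullet, \Phi^* d)$ termwise, so $(i_{0*} \boxtimes i_{0*}) \circ \Phi^*_{\crit}(\cE_\bullet, d) = [\Phi^*\cE_0] - [\Phi^*\cE_1] = \Phi^*_{\mathrm{ord}}([\cE_0] - [\cE_1]) = \Phi^*_{\mathrm{ord}} \circ i_{0*}(\cE_\bullet, d)$.

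The main (and essentially only) obstacle is arriving at the correct matrix factorization presentation of $i_{0*}$. The naive map $K(\fX_0) \to K(\fX)$ by ordinary pushforward does not descend to $K^{\crit}(\fX, \phi) = K(\fX_0)/K^\circ(\fX_0)$, because $i_{0*}$ of a perfect complex on $\fX_0$ need not vanish in $K(\fX)$. Once formulated through matrix factorizations, however, the $\Phi^*$ compatibility collapses to the trivial observation that critical pullback is termwise pullback of vector bundles, and the rest of the vertex coalgebra structure is preserved for similarly formal reasons.
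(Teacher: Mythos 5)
Your reduction of the coproduct compatibility to termwise pullback of matrix factorizations is exactly the paper's argument, and your treatment of the covacuum, translation and braiding operators matches as well. The one genuine gap is in the well-definedness step, which you correctly single out as the main obstacle but then dismiss too quickly. The assignment $(\cE_\bullet, d) \mapsto [\cE_0] - [\cE_1]$ is \emph{not} additive on distinguished triangles of $\cat{MF}_\sT(\fX,\phi)$ for free: the shift functor sends $(\cE_1 \to \cE_0 \to \cE_1\otimes\kappa)$ to a factorization with terms $\cE_0$ and $\cE_1\otimes\kappa$, so the cone of $f\colon \cE \to \cF$ has terms $\cF_0\oplus(\cE_1\otimes\kappa)$ and $\cF_1\oplus\cE_0$, and additivity on the triangle $\cE\to\cF\to\mathrm{Cone}(f)$ fails by the error term $(1-\kappa)[\cE_1]$, where $\kappa$ is the $\sT$-weight of the potential. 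Your map therefore descends to $K_0(\cat{MF})$ only because $\phi_\alpha = \tr([x^*,x]x^\circ)$ has \emph{trivial} weight $\kappa=1$ --- a consequence of the specific choice of $\bC^\times_\hbar$-weights on $\fM^{\trip}$ in Definition~\ref{def:quiver-stacks-equivariance} --- and this must be stated and checked. It is not a formality: for a potential of non-trivial weight, such as $xy$ on $\bC^2$ with $\kappa = t_1t_2$ as computed in \S\ref{sec:critical-K-theory}, no such map exists, since already $i_{0*}$ of the perfect complex $\cO_{\phi^{-1}(0)}$ equals $(1-\kappa^{-1})[\cO] \neq 0$ in $K_\sT(\bC^2)$.

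Granting $\kappa=1$, your route to well-definedness is legitimate and in fact more direct than the paper's, which instead shows that the pushforward kills the image of $K^\circ_\sT(\fX_0)$ by combining $i_0^*i_{0*} = (1-\kappa)\cdot\id = 0$ with injectivity of $i_0^*$ after $\bC^\times_\hbar$-localization (the latter proved by factoring the restriction to the $\bC^\times_\hbar$-fixed locus through $\fX_0$). Both arguments hinge on the same identity $\kappa=1$; yours avoids the localization, consistent with the paper's remark that well-definedness holds even without it. Everything downstream of this point in your proposal is correct and coincides with the paper's proof.
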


The content of this theorem is essentially the following three claims
about $i_{0*}$, which is what we will prove: it is well-defined, it
preserves the Hall products, and it preserves vertex coproducts.
Recall that it was necessary to work over $\bk_{\sT,\loc}$ to define
the vertex coproduct on the left hand side (see
\S\ref{sec:coVA-preprojective-issues}); in contrast, none of these
claims actually requires this localization in a crucial way.

\subsubsection{}

\begin{lemma}[{\cite[Proposition 3.6]{Puadurariu2023}}] \label{lem:PHA-to-ambient-map-well-defined}
  The morphism \eqref{eq:PHA-to-ambient-map} is well-defined, even
  without localization.
\end{lemma}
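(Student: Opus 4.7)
By the Quillen localization sequence
\[
K_0(\cat{Perf}_\sT(\fX_0)) \to K_\sT(\fX_0) \to K^{\crit}_\sT(\fX, \phi) \to 0
\]
arising from the definition of $K^{\crit}_\sT(\fX, \phi)$ as the $K_0$ of the Verdier quotient $D^b\cat{Coh}/\cat{Perf}$, showing that the map descends to $K^{\crit}_\sT(\fX, \phi)$ is equivalent to verifying that the pushforward $i_{0*}\colon K_\sT(\fX_0) \to K_\sT(\fX)$ vanishes on the image of $K_0(\cat{Perf}_\sT(\fX_0))$. The essential input is that, by Definition~\ref{def:quiver-stacks-equivariance}, the potential $\phi = \phi_\alpha$ is $\sT$-invariant, so its weight $\kappa$ is trivial in $\bk_\sT$.

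First I would dispatch the prototype $\cE = \cO_{\fX_0}$. Since $\fX$ is smooth and $\fX_0$ is the Cartier divisor cut out by $\phi$ as a section of the line bundle of weight $\kappa$, the Koszul resolution expresses $[i_{0*}\cO_{\fX_0}]$ as a $\bZ$-linear combination of $[\cO_\fX]$ and $[\kappa\cO_\fX]$, yielding a multiple of $(1-\kappa)$; this vanishes in $K_\sT(\fX)$ when $\kappa = 1$. By the projection formula, it would follow that $[i_{0*}(i_0^*\tilde\cE)] = [\tilde\cE]\cdot[i_{0*}\cO_{\fX_0}] = 0$ for any perfect $\tilde\cE$ on $\fX$.

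For a general perfect $\cE$ on $\fX_0$ not obviously of pullback form, I would apply Theorem~\ref{thm:Dcrit-as-matrix-factorizations} to realize $i_{0*}\cE$ as the cokernel of a matrix factorization $(\cV_0, \cV_1, d_1, d_0)$ of $\phi$, so $[i_{0*}\cE] = [\cV_0] - [\cV_1]$ in $K_\sT(\fX)$. Since $\cE$ is perfect, it is the zero object in $D^{\crit}_\sT(\fX, \phi)$, so this matrix factorization is null-homotopic. By the standard decomposition of a contractible matrix factorization into direct sums of trivial blocks of the forms $\cW \xrightarrow{\id} \cW \xrightarrow{\phi} \kappa\cW$ and $\cW \xrightarrow{\phi} \kappa\cW \xrightarrow{\id} \kappa\cW$, the difference $[\cV_0] - [\cV_1]$ is always a $\bZ$-linear combination of terms $(\kappa-1)[\cW]$, hence zero. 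The main obstacle is justifying this decomposition of contractible matrix factorizations in the $\sT$-equivariant setting on the global quotient stack $\fX = \fM^{\trip}(\alpha)$; alternatively, one can bypass it by establishing a surjectivity statement for $i_0^*$ on the equivariant K-theory of perfect complexes, which again reduces the whole problem to the already-computed vanishing of $[i_{0*}\cO_{\fX_0}]$.
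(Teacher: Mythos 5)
Your reduction via the localization sequence, the Koszul computation $[i_{0*}\cO_{\fX_0}] = (1-\kappa^{\mp1})[\cO_\fX] = 0$, and the projection-formula treatment of classes restricted from $\fX$ are all correct, and they reproduce one half of the paper's argument (the identity $i_0^*i_{0*} = (1-\kappa)\cdot\id = 0$ is the paper's packaging of the same Koszul computation). The genuine gap is exactly where you flag it: a perfect complex on $\fX_0$ need not be a restriction from $\fX$, and neither of your two proposed repairs is available as stated. A contractible matrix factorization is in general only a direct \emph{summand} of a finite sum of trivial blocks; the upgrade to a genuine direct sum uses that projective modules over a local ring are free and does not carry over to the global equivariant setting of $[\,\phi_\alpha^{-1}(0)/\GL(\alpha)]$. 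Knowing only that $[\cV_0]-[\cV_1]+[\cV_0']-[\cV_1'] = 0$ for an uncontrolled complementary summand $(\cV_\bullet',d')$ gives nothing. Your fallback --- surjectivity of $i_0^*$ on $K_0(\cat{Perf}_\sT(-))$ --- is essentially equivalent in difficulty to the lemma itself and is not justified.

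The paper closes this gap by a different mechanism that uses the specific geometry of $\fX = \fM^{\trip}$, which your proposal never touches. Since $i_0$ is a regular degree-one embedding with $\kappa = 1$, one has $i_0^* i_{0*} = (1-\kappa)\cdot\id = 0$, so it suffices to show that $i_0^*\colon K_\sT(\fX) \to K_\sT(\fX_0)$ is \emph{injective}. This is where the $\bC^\times_\hbar$-action earns its keep: its fixed locus $M(\alpha) \subset X(\alpha)$ lies inside $\phi_\alpha^{-1}(0)$, so the fixed-locus restriction $\iota^*$ factors through $i_0^*$; by equivariant localization on the smooth ambient space, $\iota^*$ is injective after inverting $1-\hbar^{\pm}$ (the normal weights of $\iota$ give a non-zerodivisor), and $K_\sT(\fX)$, being a Laurent polynomial ring, injects into its localization. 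Some global input of this kind --- an injectivity statement for $i_0^*$ exploiting the containment of the $\bC^\times_\hbar$-fixed locus in the zero fibre of the potential --- is the essential content of the lemma, and without it the matrix-factorization bookkeeping in your step 4 cannot be completed.
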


\begin{proof}
  We must show that the image of $K^\circ_\sT(\fX_0)$ is killed by
  $i_{0*}\colon K_\sT(\fX_0) \to K_\sT(\fX)$. Since each $\phi_\alpha$
  is non-zero, $i_0$ is a regular embedding and
  \[ i_0^*i_{0*} = (1 - w) \cdot \id = 0 \]
  on $K^\circ_\sT(\fX_0)$, where $w = 1$ is the $\sT$-weight of
  the potentials $\phi_\alpha$. Hence it suffices to show $i_0^*\colon
  K_\sT(\fX) = K_\sT^\circ(\fX) \to K_\sT^\circ(\fX_0)$ is injective.

  Write $\fX(\alpha) = [X(\alpha)/\GL(\alpha)]$. The fixed locus
  $\iota\colon X(\alpha)^{\bC^\times_\hbar} \hookrightarrow X(\alpha)$
  is smooth because $X(\alpha)$ is smooth. By equivariant
  localization,
  \[ \iota^*\colon K_{\sT \times \GL(\alpha)}^\circ(X(\alpha))_{\loc} \to K_{\sT \times \GL(\alpha)}^\circ(X(\alpha)^{\bC^\times_\hbar})_{\loc} \]
  is an isomorphism. But clearly $\iota$ factors as
  \[ \iota\colon X(\alpha)^{\bC^\times_\hbar} \hookrightarrow X(\alpha)_0 \xhookrightarrow{i_0} X(\alpha), \]
  and all pullbacks exist in $K^\circ$ and are functorial, so
  $i_0^*\colon K_\sT^\circ(\fX(\alpha))_{\loc} \to
  K_\sT^\circ(\fX(\alpha)_0)_{\loc}$ must be injective. Finally, since
  $K_{\sT \times \GL(\alpha)}^\circ(X(\alpha)) \hookrightarrow K_{\sT
    \times \GL(\alpha)}^\circ(X(\alpha))_{\loc}$ is injective by
  direct computation, the original $i_0^*\colon K_\sT^\circ(\fX) \to
  K_\sT^\circ(\fX_0)$ must also be injective.
\end{proof}

\subsubsection{}

\begin{lemma}[{\cite[Proposition 3.6]{Puadurariu2023}}] \label{lem:ambient-KHA-compatibility}
  The morphism \eqref{eq:PHA-to-ambient-map} is an algebra morphism.
\end{lemma}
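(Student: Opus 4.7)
The plan is to compute both $i_{0*}(a \star b)$ and $i_{0*}(a) \star i_{0*}(b)$ by reducing to ordinary equivariant K-theory operations on the zero loci and then applying base change. This works because $i_{0*}$ is, by construction (see Lemma~\ref{lem:PHA-to-ambient-map-well-defined}), induced from the ordinary pushforward $K_\sT(\fX_0) \to K_\sT(\fX)$ along the closed embedding $i_0\colon \fX_0 \hookrightarrow \fX$. Consider the diagram
\[
  \begin{tikzcd}
    \fX(\alpha)_0 \times \fX(\beta)_0 \ar[hookrightarrow]{d} & \fX(\alpha,\beta)_0 \ar{l}[swap]{q_0} \ar{r}{p_0} \ar[hookrightarrow]{d} & \fX(\alpha+\beta)_0 \ar[hookrightarrow]{d} \\
    \fX(\alpha) \times \fX(\beta) & \fX(\alpha,\beta) \ar{l}[swap]{q} \ar{r}{p} & \fX(\alpha+\beta)
  \end{tikzcd}
\]
whose vertical arrows are $i_0 \times i_0$, $i_0'$, and $i_0$ respectively. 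The right square commutes trivially, and the left square is Cartesian: the identity $p^*\phi_{\alpha+\beta} = q^*(\phi_\alpha \boxplus \phi_\beta)$ from Example~\ref{ex:critical-KHA} gives $\fX(\alpha,\beta)_0 = q^{-1}(\fX(\alpha)_0 \times \fX(\beta)_0)$ as closed substacks.

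First I would verify that, for lifts $\tilde a \in K_\sT(\fX(\alpha)_0)$ of $a$ and $\tilde b \in K_\sT(\fX(\beta)_0)$ of $b$, the critical Hall product $a \star b$ is represented on the zero locus by $(p_0)_* q_0^*(\tilde a \boxtimes \tilde b)$. This uses the factorization of $q$ in Example~\ref{ex:quiver-KHA} as an $\Ext^1$-bundle composed with a change-of-group map, which makes $q$ (hence $q_0$) flat, so the critical pullback of Example~\ref{ex:critical-KHA} reduces to the ordinary $q_0^*$ modulo $K^\circ_\sT$; similarly, since $p$ is proper and essentially a $\GL/P$-bundle, the critical pushforward descends from the ordinary $(p_0)_*$.

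Granting this identification, the equality $i_{0*}(a \star b) = i_{0*}(a) \star i_{0*}(b)$ follows from the string
\begin{align*}
  (i_0)_* (p_0)_* q_0^* (\tilde a \boxtimes \tilde b)
  &= p_* (i_0')_* q_0^* (\tilde a \boxtimes \tilde b) \\
  &= p_* q^* (i_0 \times i_0)_* (\tilde a \boxtimes \tilde b) \\
  &= p_* q^* \bigl((i_0)_* \tilde a \boxtimes (i_0)_* \tilde b\bigr),
\end{align*}
whose three equalities are, respectively: functoriality of pushforward along $p \circ i_0' = i_0 \circ p_0$; base change along the Cartesian left square (valid because $q$ has finite Tor-amplitude); and compatibility of $\boxtimes$ with closed pushforwards on each factor. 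The main obstacle I expect is the preliminary step: unpacking the definitions of the critical pullback and pushforward via matrix factorizations and checking they are computed, modulo $K^\circ$, by the ordinary operations $q_0^*$ and $(p_0)_*$ on $K_\sT(\fX_0)$. After that identification, the base-change argument above is purely formal.
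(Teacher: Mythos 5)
Your proposal is correct and follows essentially the same route as the paper: reduce to the diagram of ordinary K-groups of zero loci over ambient stacks, use that the left square (over $q$) is Cartesian by $p^*\phi_{\alpha+\beta} = q^*(\phi_\alpha \boxplus \phi_\beta)$ so base change applies, and use functoriality of pushforward for the right square (over $p$). The paper dispatches your "preliminary step" more tersely by invoking the K\"unneth property and the well-definedness lemma to say it suffices to check commutativity at the level of $K_\sT((-)_0)$, but the content is the same.
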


\begin{proof}
  Clearly $i_{0*}$ preserves the unit. For the Hall product, since
  $K_\sT^{\crit}(M, \phi)$ is a quotient of $K_\sT(M_0)$ by
  definition, the K\"unneth property and
  Lemma~\ref{lem:PHA-to-ambient-map-well-defined} imply that it
  suffices to show the following diagram commutes:
  \[ \begin{tikzcd}
      K_\sT\left((\fX(\alpha) \times \fX(\beta))_0\right) \ar{r}{q^*} \ar{d}{i_{0*}} & K_\sT(\fX(\alpha,\beta)_0) \ar{r}{p_*} \ar{d}{i_{0*}} & K_\sT(\fX(\alpha+\beta)_0) \ar{d}{i_{0*}} \\
      K_\sT(\fX(\alpha) \times \fX(\beta)) \ar{r}{q^*} & K_\sT(\fX(\alpha,\beta))_{\loc} \ar{r}{p_*} & K_\sT(\fX(\alpha+\beta)).
    \end{tikzcd} \]
  The left square commutes by base change, and the right square
  commutes by functoriality. So $i_{0*}$ preserves the Hall product.
\end{proof}

\subsubsection{}

\begin{lemma}
  The morphism \eqref{eq:PHA-to-ambient-map} is a vertex coalgebra
  morphism.
\end{lemma}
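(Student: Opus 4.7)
The plan is to verify that $i_{0*}$ preserves each of the four structure maps $(\vac, D, \check S, \Y)$ of the vertex coalgebra. Three of them are essentially formal. The covacuum lives entirely on the $\alpha = 0$ summand, where $\fX(0) = \pt$, $\phi_0 = 0$ and $i_0 = \id$, so it is trivially preserved. The translation $D(z) = z^{\deg}$ is the grading induced by the $[\pt/\bC^\times]$-action $\Psi_\alpha$, for which $i_0$ is manifestly equivariant. The braiding $\check S_{\alpha,\beta}(z)$ is a permutation composed with multiplication by the ratio $\Theta^\bullet_{\beta,\alpha}(z)/(12)^*\Theta^\bullet_{\alpha,\beta}(z^{-1})$, whose numerator and denominator are classes of vector bundles pulled back from the vector-space moduli $[*/\GL]$; the projection formula lets them pass through $(i_{0*} \boxtimes i_{0*})$ on each graded piece.

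The substantive step is preservation of $\Y_{\alpha,\beta}(\cE) = \Theta^\bullet_{\alpha,\beta}(z) \otimes z^{\deg_1}\Phi^*\cE$. Again the prefactor $\Theta^\bullet_{\alpha,\beta}(z)$ commutes with $(i_{0*} \boxtimes i_{0*})$ by the projection formula, and $z^{\deg_1}$ commutes by equivariance. The one remaining identity to verify is $i_{0*} \circ \Phi^* = \Phi^* \circ i_{0*}$, in which the left-hand $\Phi^*$ is the critical-K-theory pullback from \S\ref{sec:Dcrit-functoriality} and the right-hand one is the ordinary pullback on the smooth ambient stacks. This is a base-change-type identity for the Cartesian square given by $(\fX(\alpha) \times \fX(\beta))_0 = \Phi^{-1}(\fX(\alpha+\beta)_0)$, which holds because $\Phi^*\phi_{\alpha+\beta} = \phi_\alpha \boxplus \phi_\beta$.

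To establish this identity I would use the matrix factorization presentation. Given $\cE \in K^{\crit}_\sT(\fX(\alpha+\beta), \phi_{\alpha+\beta})$ represented by $(\cE_\bullet, d)$ via Theorem~\ref{thm:Dcrit-as-matrix-factorizations}, the sheaf $\fC(\cE_\bullet, d) = \coker(d_1)$ is supported on $\fX(\alpha+\beta)_0$ and fits into the resolution $0 \to \cE_1 \xrightarrow{d_1} \cE_0 \to \coker(d_1) \to 0$ on the smooth ambient stack, so $i_{0*}\cE = [\cE_0] - [\cE_1]$ in $K_\sT(\fX(\alpha+\beta))$. The critical pullback is the term-wise $\Phi^*(\cE_\bullet, d) = (\Phi^*\cE_\bullet, \Phi^*d)$, a matrix factorization for $\phi_\alpha \boxplus \phi_\beta$; applying $i_{0*}$ to it yields $[\Phi^*\cE_0] - [\Phi^*\cE_1]$. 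On the other hand, $\Phi^*(i_{0*}\cE) = [\Phi^*\cE_0] - [\Phi^*\cE_1]$ by additivity of the ordinary pullback on vector-bundle classes. The two match, and the statement descends through the $\bC^\times_\hbar$-localization and through the quotient supplied by Lemma~\ref{lem:PHA-to-ambient-map-well-defined}.

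The only (very mild) obstacle is the bookkeeping required to identify the matrix-factorization pullback $\Phi^*$ with the ordinary K-theoretic pullback of the two-term resolution $[\cE_1 \to \cE_0]$ computing $i_{0*}$. Both operations are visibly term-wise on vector bundles, so they interact cleanly, and I do not foresee any further difficulty.
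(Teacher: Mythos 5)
Your proposal is correct and follows essentially the same route as the paper: the covacuum, translation and braiding are dismissed as formal, and the substantive point is reduced to the commutation $i_{0*}\circ\Phi^* = \Phi^*\circ i_{0*}$, verified on a matrix-factorization representative $(\cE_1 \xrightarrow{d_1} \cE_0 \xrightarrow{d_0} \cE_1\otimes\kappa)$ where both the critical pullback and the class $i_{0*}\cF = [\cE_0]-[\cE_1]$ are manifestly term-wise on the two-term vector bundle resolution. The paper phrases the target as $K_\sT^{\crit}(\fX,0)$ via totalization rather than directly as $[\cE_0]-[\cE_1]\in K_\sT(\fX)$, but this is only a cosmetic difference.
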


\begin{proof}
  Clearly $i_{0*}$ preserves the covacuum. Also, since the bilinear
  element $\cE_{\alpha,\beta}$ is pulled back from $\bk_\sT$, as a
  $\bk_\sT$-module homomorphism $i_{0*}$ automatically commutes with
  tensor product by $\Theta(z)$. It remains to show that $i_{0*}$ is
  compatible with pullbacks along the direct sum map $\Phi$, as well
  as the scaling automorphism map $\Psi$ used to construct the
  translation operator $z^{\deg}$. Such compatibilities follow from
  the same base change argument as in the proof of
  Lemma~\ref{lem:ambient-KHA-compatibility}.
\end{proof}

\subsubsection{}
\label{sec:quiver-KHA-shuffle-formula}

For explicit computations, we record here some formulas for the vertex
bialgebra $K_\sT(\fX)$. First, let $a_e \in \bk_\sA$ be the weight of
the edge $e$ in $Q$. Then in $K_\sT(\fX(\alpha) \times \fX(\beta))$,
\begin{equation} \label{eq:bilinear-bundle-Mtrip}
  \cE_{\alpha,\beta} = \sum_{e\colon i \to j} \left[a_e \cV_{\alpha,i}^\vee \boxtimes \cV_{\beta,j} + \frac{\hbar}{a_e} \cV_{\alpha,j}^\vee \boxtimes \cV_{\beta,i}\right] + \left(\frac{1}{\hbar} - 1\right) \sum_i \cV_{\alpha,i}^\vee \boxtimes \cV_{\beta,i}
\end{equation}
where the sum is over edges of the quiver $Q$ (not $Q^{\doub}$ or
$Q^{\trip}$). In what follows we implicitly identify
\[ K_\sT(\fX(\alpha+\beta)) = \bk_\sT[s_{\alpha+\beta,i,j}]^{S(\alpha+\beta)} \subset \bk_\sT[s_{\alpha,i,j}]^{S(\alpha)}[s_{\beta,i,j}]^{S(\beta)} = K_\sT(\fX(\alpha) \times \fX(\beta)) \]
using $s_{\alpha,i,j} \leftrightarrow s_{\alpha+\beta,i,j}$ and
$s_{\beta,i,j} \leftrightarrow s_{\alpha+\beta,i,j+\alpha_i}$. This
makes sense of tautological bundles like
$\cV_{\alpha,i} = \sum_j s_{\alpha,i,j}$ whenever they appear on
$\fX(\alpha+\beta)$, such as in \eqref{eq:hall-product-quivers} below.

The vertex coproduct of the Laurent polynomial
$h \in K_\sT(\fX(\alpha+\beta))$, viewed as a function of variables
$s_{\alpha,i,j}$ and $s_{\beta,i,j}$, is
\begin{equation} \label{eq:vertex-coproduct-quivers}
  \Y_{\alpha,\beta}(z) h = h\Big|_{s_{\alpha,i,j} \mapsto z s_{\alpha,i,j}} \cdot \wedge_{-z}^\bullet \cE_{\alpha,\beta}^\vee
\end{equation}
for the appropriate expansion in $z$ (\S\ref{sec:theta-expansion}).
The Hall product of the Laurent polynomials $f \in K_\sT(\fX(\alpha))$
and $g \in K_\sT(\fX(\beta))$ is, by localization on $\GL/P$ or
otherwise,
\begin{equation} \label{eq:hall-product-quivers}
  \begin{aligned}
    f \star g
    &= \sum_{w \in S(\alpha+\beta)/S(\alpha)\times S(\beta)} w \cdot \left(fg \frac{\wedge^\bullet_{-1}(\cN_i^\vee)}{\wedge^\bullet_{-1}(\sum_i \cV_{\beta,i}^\vee \otimes \cV_{\alpha,i})}\right) \\
    &= \frac{1}{\alpha! \beta!} \sum_{w \in S(\alpha+\beta)} w \cdot \left(\frac{fg}{\wedge^\bullet_{-1}(\cE_{\alpha,\beta}^\vee)}\right)
  \end{aligned}
\end{equation}
where, with the factorization \eqref{eq:HA-p} of $p$ in mind, $\cN_i$
is the normal bundle of the map $i$ and the denominator is the
localization weight of $\pi$. In spite of the denominator, we know a
priori that the result lands in the Laurent polynomial ring $\bk_{\sT
  \times \GL(\alpha+\beta)}$. The second equality follows from
Lemma~\ref{lem:bilinear-bundle-as-relative-tangent} and that $f$ and
$g$ are already $S(\alpha)$- and $S(\beta)$-symmetric respectively.
Formulas like \eqref{eq:hall-product-quivers} are known as {\it
  shuffle products}, and the non-trivial rational function being
multipled to $f$ and $g$ is called the {\it kernel}. See \cite[\S
  2]{Kontsevich2011} for more explicit examples.

\subsubsection{}

\begin{remark}
  After base change to the fraction field of $\bk_\sT$, it is known
  \cite[Corollary 2.16]{Negut2021} that $i_{0*}$ is injective with
  image characterized by those Laurent polynomials $f(s_{\alpha,i,k})$
  satisfying the {\it wheel condition}
  \[ f\Big|_{a_e s_{\alpha,i,k_1} = \hbar s_{\alpha,j,k_2} = \hbar a_e s_{\alpha,i,k_3}} = f\Big|_{s_{\alpha,j,k_1} = a_e s_{\alpha,i,k_2} = \hbar s_{\alpha,j,k_3}} = 0 \]
  for all edges $e\colon i \to j$ in $Q$ and all $k_1 \neq k_3$ (and
  further $k_1 \neq k_2 \neq k_3$ if $i = j$). It is a straightforward
  exercise to verify algebraically that the Hall product $\star$
  preserves the wheel condition. As a much more trivial observation
  and sanity-check, the vertex coproduct
  \eqref{eq:vertex-coproduct-quivers} also preserves the wheel
  condition.
\end{remark}

\appendix

\section{K\"unneth property in K-theory}
\label{sec:kunneth-properties}

\subsubsection{}

In this appendix, we provide a general strategy
(Theorem~\ref{thm:kunneth-inductive}) to prove K\"unneth properties of
equivariant K-groups of spaces $X$, assuming that $X$ admits a
stratification where the equivariant K-groups of each stratum have
K\"unneth-like properties. In particular, in
Example~\ref{ex:nilpotent-endomorphism-stack-stratification}, we apply
this strategy to the moduli stack $\fN^{\nil}(\alpha)$
(Definition~\ref{def:endomorphism-stacks}) of nilpotent endomorphisms.

Throughout, whenever there is a scheme $X$ acted on by an algebraic
group $\sG$, we assume $X$ is quasi-projective and $G$ is reductive.

\subsubsection{}
\label{sec:BM-and-chow-setup}

Let $H^\sG(-)$ (resp. $A^\sG(-)$) denote $\sG$-equivariant
Borel--Moore homology (resp. Chow homology) with rational coefficients
and let $\bh^\sG \coloneqq H^\sG(\pt)$ be the base ring. Recall that
this means to take ordinary Borel--Moore or Chow homology of an
algebraic approximation to the topological realization
$X^{\cat{top}}_{\sG} \coloneqq X \times_\sG E\sG$ of the stack
$[X/\sG]$ \cite[\S 2.7]{Edidin1998}. In particular, both $H^\sG(-)$
and $A^\sG(-)$ retain the properties in
\S\ref{sec:equivariant-K-theory-properties}, e.g. Thom isomorphism
(with a degree shift).

Let $\hat H^\sG(-) \coloneqq \prod_{i \ge 0} H^\sG_i(-)$ denote completion
with respect to degree and similarly for $\hat H^\sG_{\alg}$. Similarly
define $\hat A^\sG(-)$. Finally, let $I_\sG \subset \bk_\sG$ be the
augmentation ideal and let $\hat K_\sG(-)$ denote the $I_\sG$-adic
completion of $K_\sG(-)$. We will use the composition
\begin{equation} \label{eq:equivariant-cycle-map}
  K_\sG(-) \to \hat K_\sG(-) \xrightarrow{\tau} \hat A_\sG(-) \xrightarrow{\cl} \hat H_\sG(-)
\end{equation}
where $\tau$ denotes the equivariant Riemann--Roch morphism
\cite[Theorem 4]{Edidin1998} and $\cl$ is the cycle class morphism.
Both $\tau$ and $\cl$ inherit the same properties as their
non-equivariant counterparts. For us, $\cl$ will always be an
isomorphism.

\subsubsection{}

\begin{example} \label{ex:k-theory-to-homology-of-point}
  Let $X = \pt$ and $\sG = \GL(n)$. This is essentially the only case
  of \eqref{eq:equivariant-cycle-map} of relevance to us.
  \begin{itemize}
  \item The $I_{\GL(n)}$-adic completion of
    $K_{\GL(n)}(\pt) = \bZ[s_1^\pm, \ldots, s_n^\pm]^{S_n}$ is
    \[ \hat K_{\GL(n)}(\pt) = \bZ\pseries*{1-s_1, \ldots, 1-s_n}^{S_n}. \]
  \item The topological realization
    $\pt^{\cat{top}}_{\GL(n)} = \varinjlim_N \Gr(n, N)$ is the
    infinite Grassmannian, with
    \[ \hat A^{\GL(n)}(\pt) = \hat H^{\GL(n)}(\pt) = \bQ\pseries*{u_1, \ldots, u_n}^{S_n}. \]
    The cycle class map $\cl$ is an isomorphism.
  \item The equivariant Riemann--Roch map $\tau$ is given by
    $s_i \mapsto \exp(u_i)$. This yields an isomorphism
    $\bQ\pseries*{1-s_i} \cong \bQ\pseries*{u_i}$, as one would
    expect.
  \end{itemize}
  Importantly, the composition \eqref{eq:equivariant-cycle-map} is
  therefore injective.

  For $\sG = \prod_k \GL(n_k)$, the same calculation holds but with
  multiple sets of (independently) symmetrized variables.
\end{example}

\subsubsection{}

\begin{remark}
  Equivariant Borel--Moore and Chow homology can be defined for
  arbitrary algebraic stacks --- in fact, even for derived stacks
  \cite[\S 2.2]{Aranha2024a} --- and so we take the liberty of stating
  the main Theorem~\ref{thm:kunneth-inductive} in this generality. But
  we will only apply it in the case where $\fX = [X/G]$ and $\fY =
  [Y/H]$ are global quotients where the $\sG$-action on $\fX$ and
  $\fY$ is induced from an $\sG$-action on $X$ and $Y$ which commutes
  with the $G$ and $H$ actions respectively. In this setting, the
  definitions and content of \S\ref{sec:BM-and-chow-setup} apply.
\end{remark}

\subsubsection{}

\begin{theorem} \label{thm:kunneth-inductive}
  Let $\sG$ be an algebraic group acting on algebraic stacks $\fX$ and
  $\fY$, and assume $\hat H^{\sG}(\fY)$ is flat over $\hat \bh^{\sG}$.
  Let
  \[ \fZ \xhookrightarrow{i} \fX \xhookleftarrow{j} \fU \]
  be inclusions of a $\sG$-invariant substack $\fZ$ and its
  complement $\fU$. Suppose, for both $\fZ$ and $\fU$:
  \begin{enumerate}
  \item $\boxtimes\colon K_\sG(-) \otimes_{\bk_\sG} K_{\sG}(\fY) \to
    K_{\sG}(- \times \fY)$ is surjective;
  \item $H^G(-)$ is a free $\bh^\sG$-module which is zero in odd
    degree;
  \item $K_\sG(-) \otimes_{\bk_\sG} K_{\sG}(\fY) \to \hat H^\sG(-)
    \otimes_{\hat \bh^\sG} \hat H^{\sG}(\fY)$ is injective.
  \end{enumerate}
  Then the same are true for $\fX$. Furthermore, properties (ii) and
  (iii) imply:
  \begin{enumerate}[resume]
  \item $\boxtimes\colon K_\sG(-) \otimes_{\bk_\sG} K_{\sG}(\fY) \to
    K_{\sG}(- \times \fY)$ is injective.
  \end{enumerate}
\end{theorem}

\begin{proof}
  (i) The four lemma implies the middle vertical arrow in
  \[ \begin{tikzcd}
      K_\sG(\fZ) \otimes_{\bk_\sG} K_{\sG}(\fY) \ar{r} \ar{d} & K_\sG(\fX) \otimes_{\bk_\sG} K_\sG(\fY) \ar{r} \ar{d} & K_\sG(\fU) \otimes_{\bk_\sG} K_\sG(\fY) \ar[twoheadrightarrow]{d} \ar{r} & 0 \\
      K_\sG(\fZ \times \fY) \ar{r} & K_\sG(\fX \times \fY) \ar{r} & K_\sG(\fU \times \fY) \ar{r} & 0
    \end{tikzcd} \]
  is surjective, where the rows arise from the long exact sequences in
  K-theory for $\fZ \hookrightarrow \fX \hookleftarrow \fU$ and $\fZ
  \times \fY \hookrightarrow \fX \times \fY \hookleftarrow \fU \times
  \fY$ and the vertical arrows are $\boxtimes$.
  
  (ii) The long exact sequence in Borel--Moore homology for $\fZ
  \hookrightarrow \fX \hookleftarrow \fU$ breaks into short exact
  sequences and yields the short exact sequence
  \[ 0 \to H^{\sG}(\fZ) \to H^{\sG}(\fX) \to H^{\sG}(\fU) \to 0 \]
  because $H^{\sG}_{\text{odd}}(\fU) = 0 = H^{\sG}_{\text{odd}}(\fZ)$
  by hypothesis. It splits since $H^{\sG}(\fU)$ is free over
  $\bh^{\sG}$.

  (iii) The (other) four lemma implies the middle arrow in
  \[ \begin{tikzcd}
      {} & K_\sG(\fZ) \otimes_{\bk_\sG} K_\sG(\fY) \ar{r} \ar[hookrightarrow]{d} & K_\sG(\fX) \otimes_{\bk_\sG} K_\sG(\fY) \ar{r} \ar{d} & K_\sG(\fU) \otimes_{\bk_\sG} K_\sG(\fY) \ar[hookrightarrow]{d} \\
      0 \ar{r} & \hat H^\sG(\fZ) \otimes_{\hat \bh^\sG} \hat H^\sG(\fY) \ar{r} & \hat H^\sG(\fX) \otimes_{\hat \bh^\sG} \hat H^\sG(\fY) \ar{r} & \hat H^\sG(\fU) \otimes_{\hat \bh^\sG} \hat H^\sG(\fY)
    \end{tikzcd} \]
  is injective, where the rows are induced from the long exact
  sequences in K-theory and Borel--Moore homology for $\fZ
  \hookrightarrow \fX \hookleftarrow \fU$. The bottom left arrow is
  injective since $\hat H^G_{\text{odd}}(U) = 0$ and tensor product
  with the flat $\bh^{\sG}$-module $\hat H^{\sG}(\fY)$ is exact.

  (iv) Using either property in (ii), the Eilenberg--Moore spectral
  sequence in Borel--Moore homology for $\fX \times \fY$ clearly
  degenerates, hence the bottom arrow in the commutative square
  \[ \begin{tikzcd}
      K_\sG(\fX) \otimes_{\bk_\sG} K_\sG(\fY) \ar{r} \ar[hookrightarrow]{d} & K_\sG(\fX \times \fY) \ar{d} \\
      \hat H^\sG(\fX) \otimes_{\hat \bh^\sG} \hat H^\sG(\fY) \ar[hookrightarrow]{r} & \hat H^\sG(\fX \times \fY)
    \end{tikzcd} \]
  is injective. By property (iii) so is the left vertical arrow. So
  the top arrow must also be injective.
\end{proof}

\subsubsection{}

\begin{corollary} \label{cor:kunneth-inductive-application}
  Suppose $\fX_1$ and $\fX_2$ are algebraic stacks with $\sG$-action,
  and both admit decompositions into finitely many disjoint locally
  closed $\sG$-invariant strata of the form $[\bC^N/G]$ such that:
  \begin{enumerate}
  \item $G$ is a unipotent extension of a product of general linear
    groups;
  \item the $\sG$-action on $[\bC^N/G]$ is induced from a $\sG$-action
    on $\bC^N$ commuting with the $G$-action.
  \end{enumerate}
  Then exterior tensor product induces an isomorphism
  \[ \boxtimes\colon K_\sG(\fX_1) \otimes_{\bk_\sG} K_\sG(\fX_2) \xrightarrow{\sim} K_\sG(\fX_1 \times \fX_2). \]
\end{corollary}

\begin{proof}
  Fix a stratum $\fU_i \cong [\bC^N/G]$ of $\fX_1$. By Thom
  isomorphism and its analogue for Borel--Moore homology,
  \begin{equation} \label{eq:stacky-affine-cells}
    \begin{aligned}
    K_\sG(\fU_i \times \fW) &\cong K_\sG([\pt/G] \times \fW) = K_{\sG \times G}(\fW) \\
    \hat H^\sG(\fU_i \times \fW) &\cong \hat H^\sG([\pt/G] \times \fW) = \hat H^{\sG \times G}(\fW)
    \end{aligned}
  \end{equation}
  for any algebraic stack $\fW$ with $\sG$-action. (On the right hand
  side, $G$ acts trivially on $\fW$.) We use this to check that
  $\fU_i$ satisfies properties (i), (ii) and (iii) of the theorem with
  $\fY = \fV_j$ where $\fV_j \cong [\bC^{N'}/G']$ is a stratum of
  $\fX_2$.
  \begin{enumerate}
  \item Compare $\fW = \pt$ with arbitrary $\fW$ in
    \eqref{eq:stacky-affine-cells} to see that $\boxtimes\colon
    K_\sG(\fU_i) \otimes_{\bk_\sG} K_\sG(\fW) \to K_\sG(\fU_i \times
    \fW)$ is an isomorphism. In particular this holds for $\fW =
    \fV_j$.
  \item Take $\fW = \pt$ in \eqref{eq:stacky-affine-cells} and apply
    the Borel--Moore analogue of \eqref{eq:equivariant-reduction} to
    reduce to the case where $G$ is actually a product of general
    linear groups. By Example~\ref{ex:k-theory-to-homology-of-point},
    $\hat H^\sG(\fU_i)$ is isomorphic as a $\hat \bh^\sG$-module to a
    free power series ring over $\hat \bh^\sG$ with all generators in
    even degree.
  \item Take $\fW = \pt$ in \eqref{eq:stacky-affine-cells}. By
    explicit computation following
    Example~\ref{ex:k-theory-to-homology-of-point}, the map
    $K_\sG(\fU_i) \otimes_{\bk_\sG} K_\sG(\fV_j) \hookrightarrow \hat
    H^\sG(\fU_i) \otimes_{\hat \bh^\sG} \hat H^\sG(\fV_j)$ is
    injective.
  \end{enumerate}

  Now use double induction on the stratifications $\fX_1 =
  \bigsqcup_{i=1}^n \fU_i$ and $\fX_2 = \bigsqcup_{j=1}^m \fV_j$.
  Namely, let $P(I,J)$ be the statement ``properties (i), (ii), and
  (iii) hold for $\fZ = \bigsqcup_{i \in I} \fU_i$ and $\fY =
  \bigsqcup_{j \in J} \fV_j$''. We just proved the base cases
  $P(\{i\},\{j\})$ for all $i$ and $j$. The theorem provides the
  inductive step for $I$, and then also for $J$ by exchanging the
  roles of $\fX$ and $\fY$. The hypothesis that $\hat H^\sG(\fY)$ is
  flat over $\hat \bh^\sG$ is always satisfied by property (ii) from
  an earlier inductive step, since it implies that $\hat H^\sG(\fY)$
  is in fact free over $\hat \bh^\sG$.

  We conclude by induction that $P(\{1,\ldots,n\},\{1,\ldots,m\})$
  holds. In particular, properties (i) and (iv) say that
  $\boxtimes\colon K_\sG(\fX) \otimes_{\bk_\sG} K_\sG(\fY) \to
  K_\sG(\fX \times \fY)$ is both injective and surjective.
\end{proof}

\subsubsection{}

\begin{example} \label{ex:nilpotent-endomorphism-stack-stratification}
  Consider the moduli stack $\fN^{\nil}(\alpha)$
  (Definition~\ref{def:endomorphism-stacks}) of nilpotent
  endomorphisms. Following standard ideas, see e.g. \cite[Theorem
    3.4]{Davison2018}, we may stratify $\fN^{\nil}(\alpha)$ by the
  Jordan type of $x^\circ$. View $x^\circ$ as a sequence of
  surjections
  \[ x \coloneqq x_0 \xrightarrow{x^\circ} x_1 \xrightarrow{x^\circ} x_2 \xrightarrow{x^\circ} \cdots \]
  with $x_{j+1} \coloneqq \im(x^\circ\big|_{x_j})$. Then the strata
  are the loci where the graded pieces have prescribed dimensions
  $\gamma_j = \dim x_j/x_{j+1}$ (which sum to $\alpha$). Each stratum
  is therefore an iterated $\Ext$ bundle over bases of the form
  $\prod_j \fM(\beta_j)$. So we may apply
  Corollary~\ref{cor:kunneth-inductive-application}, with $\sG
  \coloneqq \sT$, to conclude that
  \[ \boxtimes\colon K_\sT(\fN^{\nil}(\alpha)) \otimes_{\bk_\sT} K_\sT(\fN^{\nil}(\beta)) \to K_\sT(\fN^{\nil}(\alpha) \times \fN^{\nil}(\beta)) \]
  is an isomorphism.
\end{example}

\subsubsection{}

\begin{remark}
  The entire moduli stack $\fN(\alpha)$, not just $\fN^{\nil}(\alpha)
  \subset \fN(\alpha)$, may be stratified according to the Jordan type
  of the endomorphism $x^\circ$. To be precise, given a decomposition
  \[ \alpha = \sum_{i=1}^n m_i \alpha^{(i)} \]
  into pairwise distinct dimension vectors $\vec\alpha \coloneqq
  (\alpha^{(i)})_{i=1}^n$ and positive integer multiplicities $\vec m
  \coloneqq (m_i)_{i=1}^n$, consider the moduli substack
  \[ \fN_{\vec m, \vec\alpha} \subset \fN(\alpha) \]
  parameterizing $(x, x^\circ)$ such that
  \[ x \cong \bigoplus_{i=1}^n (x_{i,1} \oplus \cdots \oplus x_{i,m_i}) \]
  where $x_{i,j} \in \fM(\alpha^{(i)})$, and $x^\circ$ acts on
  $x_{i,j}$ with (generalized) eigenvalue $\lambda_{i,j}$, such that
  $\lambda_{i,j} \neq \lambda_{i,k}$ for any $1 \le j \neq k \le m_i$.
  Then $\fN_{\vec m, \vec\alpha}$, ranging over all choices of $n$,
  $\vec m$ and $\vec\alpha$, form a stratification of $\fN(\alpha)$;
  the condition on eigenvalues is to prevent these strata from
  overlapping. Explicitly,
  \[ \fN_{\vec m, \vec\alpha} \cong \prod_{i=1}^n \fN^{\nil}(\alpha^{(i)})^{\times m_i} \times U_{m_i} \]
  where $U_m \subset \bC^m$ is the complement of the union of all
  diagonals. However, in contrast to
  Example~\ref{ex:nilpotent-endomorphism-stack-stratification},
  Theorem~\ref{thm:kunneth-inductive} does not apply to this
  stratification because $U_m$ typically has odd Borel--Moore
  homology. For instance, the complement of the diagonal in $\bC^2$
  has non-trivial $H_3$.
\end{remark}

\phantomsection
\addcontentsline{toc}{section}{References}

\begin{small}
\bibliographystyle{alpha}
\bibliography{VAQG}
\end{small}

Kavli Institute for the Physics and Mathematics of the Universe (WPI), The University of Tokyo Institutes for Advanced Study, The University of Tokyo, Kashiwa, Chiba 277-8583, Japan

\textit{E-mail address}: \texttt{\href{mailto:henry.liu@ipmu.jp}{henry.liu@ipmu.jp}}

\end{document}